\title{On o-minimal homotopy groups}
\author{El\'ias Baro \thanks{Partially supported by
GEOR MTM2005-02568}\\
Departamento de Matem\'aticas\\ Universidad Aut\'onoma de Madrid\\ 28049 Madrid, Spain \and Margarita Otero\thanks{Partially supported by
GEOR MTM2005-02568 and Grupos UCM 910444}\\ Departamento de Matem\'aticas\\ Universidad Aut\'onoma de Madrid\\28049 Madrid, Spain}
\newtheorem{deff}{Definition}[section]
\newtheorem{teo}[deff]{Theorem}
\newtheorem{lema}[deff]{Lemma}
\newtheorem{cor}[deff]{Corollary}
\newtheorem{prop}[deff]{Proposition}
\newtheorem{fact}[deff]{Fact}
\newtheorem{ejemcur}[deff]{Example}
\theoremstyle{definition}
\newtheorem{nota}[deff]{Notation}
\newtheorem{obs}[deff]{Remark}
\newcommand{\LD}{ld-space}
\newcommand{\ld}{LD-space}
\newcommand{\bt}{\begin{theorem}}
\newcommand{\et}{\end{theorem}}
\newcommand{\bl}{\begin{lemma}}
\newcommand{\el}{\end{lemma}}
\newcommand{\bexa}{\begin{example}}
\newcommand{\eexa}{\end{example}}
\newcommand{\bexe}{\begin{exercise}}
\newcommand{\eexe}{\end{exercise}}
\newcommand{\bprop}{\begin{proposition}}
\newcommand{\eprop}{\end{proposition}}
\newcommand{\bp}{\begin{proof}}
\newcommand{\ep}{\end{proof}}
\newcommand{\bc}{\begin{corollary}}
\newcommand{\ec}{\end{corollary}}
\newcommand{\bd}{\begin{definition}}
\newcommand{\ed}{\end{definition}}
\newcommand{\br}{\begin{remark}}
\newcommand{\er}{\end{remark}}
\begin{document}
\textbf{\begin{center}LOCALLY DEFINABLE HOMOTOPY\end{center}}
\begin{center}\begin{footnotesize}EL\'IAS BARO \footnote{Partially supported by
GEOR MTM2005-02568.}\begin{scriptsize} AND  \end{scriptsize}MARGARITA OTERO\footnote{Partially supported by
GEOR MTM2005-02568 and Grupos UCM 910444.

\textit{Date}: November 16, 2008

\textit{Mathematics Subject Classification 2000}: 03C64, 14P10, 55Q99.}
\end{footnotesize}\end{center}
\begin{quote}
\begin{footnotesize}\begin{scriptsize}ABSTRACT\end{scriptsize}. In \cite{07pBO} o-minimal homotopy was developed  for the definable category, proving o-minimal versions of the Hurewicz theorems and the Whitehead theorem. Here, we extend these results to the category of locally definable spaces, for which we introduce homology and homotopy functors. 
We also study the concept of connectedness in $\bigvee$-definable groups -- which are examples of locally definable spaces. We show that the various concepts of connectedness associated to these groups, which have  appeared in the literature, are non-equivalent.
\end{footnotesize} 
\end{quote}

\section{Introduction}\label{sintro}
According to H.Delfs and M.Knebusch, the reference \cite{85DK} is the first part of what ``it is designed as a \textit{topologie g\'{e}n\'{e}rale} for semialgebraic geometry''. The main purpose of the book is to introduce a new category extending the semialgebraic one and large enough to be able to deal with objects such as covering maps of ``infinite degree''. Specifically, the authors define locally semialgebraic spaces, roughly, as those obtained by glueing infinitely many affine semialgebraic sets. 

In the o-minimal setting we have the corresponding  situation, the definable category is not large enough to deal with certain natural objects. Even though the theory of locally semialgebraic spaces had not been formally extended to the o-minimal framework, some related notions have already appeared -- always carrying a group structure. This is the case of $\bigvee$-definable groups which were used by Y. Peterzil and S. Starchenko in \cite{00PS} as a tool for the study of interpretability problems. Later, M. Edmundo introduces a restricted notion of $\bigvee$-definable groups in \cite{06E} and he develops a whole theory around them. However, the latter two categories are not so flexible and general as the locally definable category. For instance, in the locally definable category there is a natural adaptation of the classical construction of universal coverings which generalize the corresponding result for restricted $\bigvee$-definable groups in \cite{06EEl}. Another example of the rigidity of the $\bigvee$-definable groups and their restricted analogues are the \textit{non-equivalently} notions of connectedness introduced in \cite {06E},\cite{06EEl},\cite{08OP} and \cite{00PS} which we can now clarify  by considering the locally definable category.

On the other hand, in \cite{85DK}, after introducing the locally semialgebric category, locally semialgebraic homotopy theory is developed. Delfs and Knebusch first prove -- using the Tarski-Seidenberg principle -- some beautiful results relating both the semialgebraic and the classical homotopy of semialgebraic sets defined without parameters -- and hence realizable over the reals. Then, they generalize these results to regular paracompact locally semialgebraic spaces -- the nice ones. Because of the lack of the Tarski-Seidenberg principle in o-minimal structures, only  the o-minimal fundamental group was considered (see \cite{02BeO}) with strong consequences in the study of definable groups in \cite{04EO}. In \cite{07pBO}, the authors fill this gap -- in the study of definable homotopy -- by relating both the o-minimal and the semialgebraic (higher) homotopy groups. The core of the latter work is the adaptation to the o-minimal setting of some techniques used in  \cite{85DK} via a refinement of the Triangulation theorem (see the Normal Triangulation Theorem in \cite{07pB}).

Having at hand these recent results for the o-minimal homotopy theory, it seems to us natural to extend them to the locally definable category. Therefore, we have taken this opportunity to develop the locally definable category in o-minimal structures expanding a real closed field. Furthermore, we have tried to unify the related notions of $\bigvee$-groups and their restricted version via the theory of locally definable spaces. We also point out that we have avoided the presentation style of Delfs and Knebusch in \cite{85DK} with ``sheaf'' flavour, using instead the natural generalization of definable spaces of L.van den Dries in \cite{98Dr}.

The results of this paper have already been applied to prove the contractibility of the universal covering group of a definably compact abelian group (see \cite{08pBMO}).

In section \ref{sec:definition} we first introduce the category of locally definable spaces (in short ld-spaces). Locally definable spaces of special interest are the regular paracompact ones (in short LD-spaces). We collect the relevant facts from \cite{85DK} which can be directly adapted to our context, most notably the Triangulation Theorem for LD-spaces (for completeness we have included a proof of this last result in an appendix). A homology theory for LD-spaces is developed in Section \ref{shomol} via an alternative approach to that of \cite{85DK}  for locally semialgebraic spaces (which goes through sheaf cohomology). In \cite{00PS} it is implicitly proved that the $\bigvee$-definable groups are examples of ld-spaces, in Section \ref{sexamples} we prove that the restricted ones are moreover paracompact --and hence LD-spaces-- and we also discuss other examples of ld-spaces. In Section \ref{sec:connectedness} we deal with connectedness for ld-spaces and  we will clarify the relation among the different notions of connectedness used for $\bigvee$-definable groups which appear in the literature, pointing out the inadequacy of some of them. Finally, with all these tools at hand,  we prove in Section \ref{shomot} the generalizations to LD-spaces of the homotopy results in \cite{07pBO}, in particular the Hurewicz theorems and the Whitehead theorem.

The results of this paper are part of the first author's Ph.D. dissertation.
\section{Preliminaries on locally definable spaces}\label{sec:definition}
We fix an o-minimal expansion $\mathcal{R}$ of a real closed field $R$. We take the order topology on $R$ and the product topology on $R^n$ for $n>1$. For the rest of this paper, ``definable'' means ``definable with parameters'' and ``definable map'' means ``continuous definable map'', unless otherwise specified.

We shall briefly discuss the category of locally definable spaces. All the results we list in this Section are analogous to those of locally semialgebraic spaces in \cite{85DK}. The proofs of these results in \cite{85DK} are based on properties of semialgebraic sets which are shared by definable sets. Hence we have not included their proofs here.
\begin{deff}Let $M$ be a set. An \textbf{atlas} on $M$ is a family of \textbf{charts} $\{(M_i,\phi_i)\}_{i\in I}$, where $M_i$ is a subset of $M$ and  $\phi_i:M_i\rightarrow Z_i$ is a bijection between $M_i$ and a definable set $Z_i$ of $R^{n(i)}$ for all $i\in I$, such that $M=\bigcup_{i\in I}M_i$ and for each pair $i,j\in I$ the set $\phi_i(M_i\cap M_j)$ is a relative open definable subset of $Z_i$ and the map
$$\phi_{ij}:=\phi_j\circ \phi^{-1}_i:\phi_i(M_i\cap M_j)\rightarrow M_i\cap M_j \rightarrow \phi_j(M_i\cap M_j)$$
is definable. We say that $(M,M_i,\phi_i)_{i\in I}$ is a \textbf{locally definable space}. The \textbf{dimension} of $M$ is $dim(M):=sup\{dim(Z_i):i\in I\}$. If $Z_i$ and $\phi_{ij}$ is defined over $A$ for all $i,j\in I$, $A\subset R$, we say that $M$ is a locally definable space over $A$.

We say that two atlases $(M,M_i,\phi_i)_{i\in I}$ and $(M,M'_j,\psi_j)_{j\in J}$ on a set $M$ are \textbf{equivalent} if and only if for all $i\in I$ and $j\in J$ we have that \emph{(i)} $\phi_{i}(M_i\cap M'_j)$ and $\psi_{j}(M_i\cap M'_j)$ are relative open definable subsets of $\phi_i(M_i)$ and $\psi_j(M'_j)$ respectively, \emph{(ii)} the map $\psi_j\circ \phi_i^{-1}|_{\phi_{i}(M_i\cap M'_j)}:\phi_{i}(M_i\cap M'_j)\rightarrow M_i\cap M'_j \rightarrow \psi_{j}(M_i\cap M'_j)$ and its inverse are definable and \emph{(iii)} $M_i\subset \bigcup_{k\in J_0}M'_k$ and $M'_j\subset \bigcup_{s\in I_0}M_s$ for some finite subsets $J_0$ and $I_0$ of $J$ and $I$ respectively.
\end{deff}
Note that in the above definition if we take $I$ to be finite then $M$ is just a definable space in the sense of \cite{98Dr}. In fact, some of the notions that we are going to introduce in this section are generalizations of the corresponding  ones in the category of definable spaces.

Even though the above definition seems different from its semialgebraic analogue (see \cite[Def.I.3]{85DK}), they are actually equivalent. In \cite{85DK} it is (implicitly) proved that Definition I.3 is equivalent to the semialgebraic analogue of our definition here (see \cite[Lem.I.2.2]{85DK} and the remark after \cite[Lem.I.2.1]{85DK}). The same proofs can be adapted to the o-minimal setting.

Given a locally definable space $(M,M_i,\phi_i)$, there is a unique topology in $M$ for which $M_i$ is open and $\phi_i$ is a homeomorphism for all $i\in I$. For the rest of the paper any topological property of locally definable spaces refers to this topology. We are mainly interested in Hausdorff topologies. \textit{Henceforth, an \textbf{\LD} means a Hausdorff locally definable space.}
\vspace{0.4cm}

We now introduce the subsets of interest in the category of \LD s.
\begin{deff}\label{deff:definable}Let $(M,M_i,\phi_i)_{i\in I}$ be an \LD. We say that a subset $X$ of $M$ is a \textbf{definable subspace} of $M$ (over $A$) if there is a finite $J\subset I$ such that $X\subset \bigcup_{j\in J}M_j$ and $\phi_{j}(M_{j}\cap X)$ is definable (resp. over $A$) for all $j\in J$. A subset $Y\subset M$ is an \textbf{admissible subspace} of $M$ (over $A$) if $\phi_i(Y\cap M_i)$ is definable (resp. over $A$) for all $i\in I$, or equivalently, $Y\cap X$ is a definable subspace of $M$ (resp. over $A$) for every definable subspace $X$ of $M$ (resp. over $A$).
\end{deff}
The admissible subspaces of an \LD \ are closed under complements, finite unions and finite intersections. Moreover, the interior and the closure of an admissible subspace is an admissible subspace.

Every definable subspace of an \LD \ is admissible. The definable subspaces of an \LD \ are closed under finite unions and finite intersections, but not under complements. The interior of a definable subspace is a definable subspace. However, the closure of a definable subspace might not be  a definable subspace (see Example \ref{expara}).
\begin{obs}\label{obs:admssiblesets}Given an \LD \ $(M,M_i,\phi_i)_{i\in I}$ we have that every admissible subspace $Y$ of $M$ inherits in a natural way a structure of an \LD, whose atlas is $(Y,Y_i,\psi_i)_{i\in I}$, where $Y_i:=M_i\cap Y$ and $\psi_i:=\phi_i|_{Y_i}$. In particular, if $Y$ is a definable subspace then it inherits the structure of a definable space.
\end{obs}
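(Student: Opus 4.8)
The plan is to verify directly that the family $(Y,Y_i,\psi_i)_{i\in I}$ satisfies the conditions in the definition of an atlas, the decisive hypothesis being the admissibility of $Y$. First, each $\psi_i=\phi_i|_{Y_i}$ is, as a restriction of the bijection $\phi_i$, a bijection from $Y_i=M_i\cap Y$ onto $W_i:=\phi_i(M_i\cap Y)$; and $W_i$ is a definable subset of $Z_i\subseteq R^{n(i)}$ precisely because $Y$ is admissible (this is exactly the defining condition in Definition~\ref{deff:definable}). The covering condition $Y=\bigcup_{i\in I}Y_i$ follows at once from $M=\bigcup_{i\in I}M_i$ together with $Y_i=M_i\cap Y$.

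For the compatibility conditions I would exploit the injectivity of $\phi_i$, which allows one to commute $\phi_i$ with intersections of subsets of $M_i$. Since $Y_i\cap Y_j=M_i\cap M_j\cap Y$, this gives $\psi_i(Y_i\cap Y_j)=\phi_i(M_i\cap M_j)\cap W_i$. Now $\phi_i(M_i\cap M_j)$ is relatively open and definable in $Z_i$ by the atlas condition for $M$, so its intersection with $W_i$ is relatively open in $W_i$ for the subspace topology and is definable, as required. The transition map $\psi_{ij}=\psi_j\circ\psi_i^{-1}$ is simply the restriction of $\phi_{ij}=\phi_j\circ\phi_i^{-1}$ to the definable subset $\psi_i(Y_i\cap Y_j)$ of $\phi_i(M_i\cap M_j)$, hence definable. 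This shows that $(Y,Y_i,\psi_i)_{i\in I}$ is an atlas and thus that $Y$ is a locally definable space.

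It remains to see that this structure is an \LD\ and is the natural one. Here I would check that the topology determined by the atlas --- the unique one making each $Y_i$ open and each $\psi_i$ a homeomorphism --- coincides with the subspace topology on $Y$ inherited from $M$: each $M_i$ is open in $M$ with $\phi_i$ a homeomorphism onto $Z_i$, so $\psi_i$ carries the subspace topology of $Y_i$ homeomorphically onto that of $W_i$ in $Z_i$, and the $Y_i$ form an open cover. Being a subspace of the Hausdorff space $M$, $Y$ is Hausdorff, so $Y$ is an \LD. Finally, if $Y$ is a definable subspace then there is a finite $J\subseteq I$ with $Y\subseteq\bigcup_{j\in J}M_j$ and each $\phi_j(M_j\cap Y)$ definable; then $(Y,Y_j,\psi_j)_{j\in J}$ is a finite atlas satisfying the same conditions, so $Y$ is a definable space by the observation following the definition of a locally definable space. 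There is no serious obstacle in this argument; the only points needing care are the use of injectivity of the charts to move the intersection inside $\phi_i$, the bookkeeping showing relative openness and definability of $\psi_i(Y_i\cap Y_j)$ inside $W_i$, and the identification of the atlas topology with the subspace topology, which is what delivers the Hausdorff property automatically.
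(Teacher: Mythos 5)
Your verification is correct, and it follows exactly the route the paper intends: the paper states this Remark without proof (explicitly deferring to the routine adaptations of Delfs--Knebusch \cite{85DK}), and your chart-by-chart check --- admissibility giving definability of $W_i=\phi_i(M_i\cap Y)$, injectivity of $\phi_i$ to identify $\psi_i(Y_i\cap Y_j)=\phi_i(M_i\cap M_j)\cap W_i$, restriction of $\phi_{ij}$ for the transition maps, identification of the atlas topology with the subspace topology to get Hausdorffness, and passage to a finite subatlas in the definable case --- is precisely that omitted verification.
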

Now, we introduce the maps that we will use in the locally definable category. First, note that given two \LD s $M$ and $N$, with their atlas $(M_i, \phi_i)_{i\in I}$ and
$(N_j, \psi_j)_{j\in J}$, respectively, the atlas $(M_i\times N_j, (\phi_i, \psi_j))_{i\in I,j\in J}$ makes $M\times N$ into an \LD. In particular, if $M$ and $N$ are definable spaces, then $M\times N$ is a definable space. Recall that a map $f$ from a definable space $M$ into a definable space $N$ is a definable map over $A$, $A\subset R$, if its graph is a definable subset of $M\times N$ over $A$.
\begin{deff}\label{def:locmap}Let $(M,M_i,\phi_i)_{i\in I}$ and $(N,N_j,\phi_j)_{j\in J}$ be \LD s. A map $f:M  \rightarrow N$ is an \textbf{ld-map} over $A$, $A\subset R$, if $f(M_i)$ is a definable subspace of $N$ and the map $f|_{M_i}:M_i\rightarrow f(M_i)$ is definable over $A$ for all $i\in I$.
\end{deff}
The behavior of admissible subspaces and ld-maps in the locally definable category is different from that of definable subsets and definable maps in the definable category. For, even though the preimage of an admissible subspace by an ld-map is an admissible subspace, the image of an admissible subspace by an ld-map might not be an admissible subspace (see comments after Example \ref{ex}). Nevertheless, the image of a definable subspace by an ld-map is a definable subspace. In particular, let us note that every ld-map between definable spaces is a definable map and therefore the category of definable spaces is a full subcategory of the category of \LD s. On the other hand, given two \LD s $M$ and $N$, the graph of an ld-map $f:M\rightarrow N$ is an admissible subspace of $M\times N$. However, not every continuous map $f:M\rightarrow N$ whose graph is admissible in $M\times N$ is an ld-map.

\vspace{0.4cm}
The notion of connectedness in the locally definable category  which we now introduce  is a subtle issue. It extends the natural concept of ``definably connected'' for definable spaces. In Section \ref{sec:connectedness} below we will analyze this concept and we will compare it with other definitions introduced by different authors in the study of $\bigvee$-groups.
\begin{deff}Let $M$ be an \LD \ and $X$ an admissible subspace of $M$. We say that $X$ is \textbf{connected} if there is no admissible subspace $U$ of $M$ such that $X\cap U$ is both open and closed in $X$.
\end{deff}

We now introduce \LD s with some special properties. As we will see below, in the \LD s with these properties there is a good relation between both the topological and the definable setting. Moreover, they form an adequate framework to develop a homotopy theory.

We say that an \LD \ $(M,M_i,\phi_i)$ is \textit{\textbf{regular}} if every $x\in M$ has a fundamental system of closed (definable) neighbourhoods, i.e., for every open $U$ of $M$ with $x\in U$ there is a closed (definable) subspace $C$ of $M$ such that $C\subset U$ and $x\in int(C)$. Equivalently, an \LD \ $M$ is regular if for every closed subset $C$ of $M$ and every point $x\in M\setminus C$ there are open (admissible) disjoint subsets $U_1$ and $U_2$ with $C\subset U_1$ and $x\in U_2$.
\begin{obs}\label{obs:regularity}If $M$ is a regular \LD \ then every definable subspace of $M$ can be interpreted as an affine set, i.e, as a definable set of $R^n$ for some $n\in \mathbb{N}$. For, suppose that $X$ is a definable subspace of $M$. Then, $X$ inherits a structure of definable space from $M$ (see Remark \ref{obs:admssiblesets}). Since $M$ is regular then $X$ is also regular. Finally, by the o-minimal version of Robson's embedding theorem, $X$ is affine (see \cite[Ch.8,Thm. 1.8]{98Dr}).
\end{obs}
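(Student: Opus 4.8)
The plan is to reduce the statement to the o-minimal version of Robson's embedding theorem by showing that a definable subspace of a regular \LD, when equipped with its inherited structure, is a \emph{regular definable space}. Once that is in place, Robson's theorem supplies the affine embedding and we are done.

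First I would fix a definable subspace $X$ of $M$ and make precise the structure it carries. By Definition \ref{deff:definable} there is a finite $J \subset I$ with $X \subset \bigcup_{j \in J} M_j$ and $\phi_j(M_j \cap X)$ definable for each $j \in J$, so by Remark \ref{obs:admssiblesets} the charts $(M_j \cap X, \phi_j|_{M_j \cap X})_{j \in J}$ form an atlas for $X$. The point to record is that this atlas is \emph{finite}, which is exactly the condition that makes $X$ a definable space (in the sense of \cite{98Dr}) rather than merely an \LD; this is the object to which Robson's theorem will be applied.

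The core step is to verify that $X$, as a definable space, is regular. Here I would use the separation formulation of regularity stated just before the remark: for every closed $C \subset M$ and every $x \in M \setminus C$ there are disjoint open admissible $U_1, U_2$ with $C \subset U_1$ and $x \in U_2$. Given $x \in X$ and a closed subset $D$ of $X$ with $x \notin D$, I would write $D = C \cap X$ for some closed $C \subset M$ (using the subspace topology), apply the separation property of $M$ to the pair $x, C$, and intersect the resulting open sets with $X$ to obtain the required disjoint relatively open neighbourhoods in $X$. Equivalently, one transfers a fundamental system of closed definable neighbourhoods of $x$ in $M$ to one in $X$ by intersecting with $X$, noting that $x \in \mathrm{int}_X(C \cap X)$ whenever $x \in \mathrm{int}_M(C)$.

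Finally, with $X$ exhibited as a regular definable space, the o-minimal Robson embedding theorem \cite[Ch.8,Thm.1.8]{98Dr} yields a definable homeomorphism of $X$ onto a definable subset of some $R^n$, i.e.\ $X$ is affine, which is precisely the conclusion. The only genuine obstacle is the regularity-transfer step: one must check that the separation property, stated for the ambient \LD, descends to the subspace topology on $X$ and does so \emph{definably}, that is, with admissible or definable witnesses rather than arbitrary opens. This is exactly the place where the hypothesis that $M$ be regular, as opposed to merely Hausdorff, is indispensable.
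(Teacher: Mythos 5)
Your proposal is correct and follows exactly the paper's own route: equip $X$ with its inherited (finite-atlas) definable space structure via Remark \ref{obs:admssiblesets}, observe that regularity passes from $M$ to the subspace $X$, and conclude by the o-minimal Robson embedding theorem \cite[Ch.8,Thm. 1.8]{98Dr}. The only difference is that you spell out the regularity-transfer step (via the separation formulation), which the paper leaves as a one-line assertion.
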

Let $(M,M_i,\phi_i)_{i\in I}$ be an \LD. A family $\{X_j\}_{j\in J}$ of admissible subspaces of $M$ is an \textit{\textbf{admissible covering}} of $X:=\bigcup_{j\in J}X_j$ if for all $i\in I$, $M_i\cap X=M_i\cap (X_{j_1}\cup \cdots \cup X_{j_l})$ for some $j_1,\ldots,j_l\in J$ (note that in particular $X$ is an admissible subspace). A family $\{Y_j\}_{j \in J}$ of admissible subspaces of $M$ is \textit{\textbf{locally finite}} if for all $i\in I$ we have that $M_i\cap Y_j\neq \emptyset$ for only a finite number of $j\in J$ (note that in particular it is an admissible covering of their union). In general, not every admissible covering is locally finite (see Example \ref{expara}). We say that an \LD \ $M$ is \textit{\textbf{paracompact}} if there exists a locally finite covering of $M$ by open definable subspaces. Note that this notion is ``weaker'' than the classical one. It is easy to prove that if $M$ is paracompact then every admissible covering of $M$ has a locally finite refinement (see \cite[Prop. I.4.5]{85DK}). We say that an \LD \ $M$ is \textit{\textbf{Lindel\"of}} if there exist an admissible covering of $M$ by countably many open definable subspaces. Paracompactness provide us a good relation between both the topological and definable setting.
\begin{fact}\label{fact:claudef}Let $M$ be an \LD.\\
\emph{(1)} \emph{\cite[Prop. I.4.6]{85DK}} If $M$ is paracompact then for every definable subspace $X$, the closure $\overline{X}$ is also a definable subspace of $M$.\\
\emph{(2)} \emph{\cite[Thm. I.4.17]{85DK}} If $M$ is connected and paracompact then $M$ is Lindel\"of.\\
\emph{(3)} \emph{\cite[Prop. I.4.18]{85DK}} If $M$ is Lindel\"of and for every definable subspace $X$ its closure $\overline{X}$ is also a definable subspace, then $M$ is paracompact.
\end{fact}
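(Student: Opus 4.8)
The plan is to reduce all three parts to a single o-minimal input — that the closure of a definable subset of a definable space is again definable — together with careful bookkeeping relating the purely topological notion of closure to the combinatorial notions of \emph{admissible covering} and \emph{local finiteness}. I will use repeatedly that a definable subspace $X$ lies in finitely many charts $M_{i_1},\dots,M_{i_s}$, and the elementary observation that local finiteness with respect to the atlas charts is equivalent to local finiteness with respect to arbitrary definable subspaces (since each chart is itself a definable subspace, and each definable subspace lies in finitely many charts).

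For (1), start from a locally finite covering $\{U_k\}_{k\in K}$ of $M$ by open definable subspaces, which exists by paracompactness. Since $X$ lies in finitely many charts and each chart meets only finitely many $U_k$, the set $X$ meets only finitely many members, say $U_{k_1},\dots,U_{k_m}$. I claim $\overline{X}\subset U:=U_{k_1}\cup\cdots\cup U_{k_m}$: indeed any $x\in\overline{X}$ lies in some $U_k$, and as $U_k$ is open and contains a point of $\overline{X}$ it must meet $X$, forcing $k\in\{k_1,\dots,k_m\}$. Now $U$ is a finite union of open definable subspaces, hence a definable subspace, and since $U$ is open the closure of $X$ in $M$ coincides with its closure inside the definable space $U$. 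The o-minimal input then gives that this closure is definable, so $\overline{X}$ is a definable subspace.

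For (2), take again a locally finite open definable covering $\{U_k\}_{k\in K}$ (discarding empty members) and form the graph on $K$ whose edges join $k,k'$ when $U_k\cap U_{k'}\neq\emptyset$. Each $U_k$ is a definable subspace, so by local finiteness it meets only finitely many $U_{k'}$; thus the graph has finite valence and each of its connected components is countable. For a component $C$, the set $V_C:=\bigcup_{k\in C}U_k$ is open and admissible (each chart meets only finitely many $U_k$), while its complement $\bigcup_{k\notin C}U_k$ is open as well, since no $U_k$ with $k\notin C$ can meet $V_C$ without creating an edge into $C$. Hence $V_C$ is a nonempty clopen admissible subspace, so the connectedness of $M$ forces $V_C=M$. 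A single, countable, component therefore covers $M$, exhibiting a countable admissible covering of $M$ by open definable subspaces; that is, $M$ is Lindel\"of.

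For (3), fix a countable admissible covering $\{V_n\}_{n\in\mathbb{N}}$ of $M$ by open definable subspaces, provided by the Lindel\"of hypothesis. The decisive remark is that, by admissibility, every definable subspace is covered by finitely many of the $V_n$. Using this together with the hypothesis that closures of definable subspaces are definable, I would build inductively an increasing exhaustion $D_1\subset D_2\subset\cdots$ by open definable subspaces with $\overline{D_n}\subset D_{n+1}$ and $\bigcup_n D_n=M$: at stage $n+1$ take $D_{n+1}=V_1\cup\cdots\cup V_m$ with $m$ large enough that $D_{n+1}$ contains both $\overline{D_n}$ (definable by hypothesis, hence finitely covered) and $V_{n+1}$. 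The sets $U_n:=D_{n+1}\setminus\overline{D_{n-1}}$ are then open definable subspaces covering $M$, and I expect the verification of their local finiteness to be the main obstacle. It is settled by the same decisive remark: any definable subspace $X$ lies in $D_N$ for some $N$, whence $X\cap U_n\subset X\setminus\overline{D_{n-1}}=\emptyset$ for every $n>N$, so $X$ meets only finitely many $U_n$. The crux throughout is that ``admissible covering'' is a finiteness condition on charts rather than a topological compactness, and it is precisely the conjunction of the Lindel\"of property (to obtain a countable admissible covering) and the definability of closures (to nest the exhaustion) that lets one place every definable subspace inside a finite stage.
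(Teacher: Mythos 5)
Your proposal is correct. For part (3) --- the only part the paper actually proves --- your argument is essentially the paper's own: both take a countable admissible covering by open definable subspaces, pass (using the hypothesis on closures together with the observation that any definable subspace is covered by finitely many members of an admissible covering) to a nested exhaustion $D_1\subset D_2\subset\cdots$ with $\overline{D_n}\subset D_{n+1}$, and then form the differences $U_n=D_{n+1}\setminus\overline{D_{n-1}}$; you merely spell out the verification of local finiteness that the paper compresses into one sentence. Parts (1) and (2) the paper does not prove at all: it cites \cite[Prop.\ I.4.6]{85DK} and \cite[Thm.\ I.4.17]{85DK}, remarking that the semialgebraic proofs adapt. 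Your self-contained arguments for these are correct and in the same spirit as the cited ones: in (1), local finiteness of the covering relative to the charts, plus the fact that $X$ lies in finitely many charts, traps $\overline{X}$ inside a finite union $U$ of open definable subspaces, and then $\overline{X}$ --- which is admissible, since the paper records that closures of admissible subspaces are admissible --- is a definable subspace because it is an admissible subspace contained in the definable subspace $U$; in (2), the finite-valence graph on the index set and the clopen sets $V_C$ correctly exploit the paper's notion of connectedness (no nonempty proper clopen \emph{admissible} subspace), and the countability of a connected component of a locally finite graph is what yields the countable covering. One cosmetic remark on (1): the identification of $\overline{X}$ with the closure of $X$ inside $U$ follows from the containment $\overline{X}\subset U$ that you had just established, not from the openness of $U$ as such; the justification is slightly misplaced but the argument is complete.
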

\begin{proof}The locally definable versions of the proofs of the above facts are just adaptations of the semialgebraic ones. Nevertheless, we prove here (3) to give an idea of the flavour of these proofs. Let $\{M_n:n\in \mathbb{N}\}$ be an admissible covering of $M$ by countably many open definable subspaces. We can assume that $M_n\subset M_{n+1}$ for every $n\in \mathbb{N}$. Moreover, since each $\overline{M_n}$ is also a definable subspace, we can assume that $M_n\subset \overline{M_n}\subset M_{n+1}$ for every $n\in \mathbb{N}$. Consider the family $U_0=M_0$, $U_1=M_1$, $U_n=M_n\setminus \overline{M_{n-2}}$ for every $n\geq 2$. Then, $\{U_n:n\in \mathbb{N}\}$ is a locally finite covering of $M$ by open definable subspaces. 
\end{proof}

The fact that definable subspaces are affine together with paracompactness permits to establish a Triangulation Theorem for regular and paracompact \LD s (which will be essential for the proof of the Hurewicz and Whitehead theorems below). Fix a cardinal $\kappa$. We denote by $R^{\kappa}$ the $R$-vector space generated by a fixed basis of cardinality $\kappa$. A \textit{\textbf{generalized simplicial complex}} $K$ in $R^{\kappa}$ is a usual simplicial complex except that we may have infinitely many (open) simplices. The \textit{\textbf{locally finite}} generalized simplicial complexes are those ones for which the star of each simplex is a finite subcomplex. On them we can define in an obvious way an \LD \ structure. Indeed, given a locally finite generalized simplicial complex $K$, for each $\sigma\in K$ we have that $St_{_K}(\sigma)$ is a finite subcomplex and therefore $St_{_K}(\sigma)\subset R^{n_{\sigma}}\subset R^{\kappa}$ for some $n_{\sigma}\in \mathbb{N}$. Now, giving each $St_{_K}(\sigma)$ the topology it inherits from $R^{n_{\sigma}}$, it suffices to consider the atlas $\{(St_{_K}(\sigma),id|_{St_{_K}(\sigma)} \}_{\sigma\in K}$. With this \LD \ structure, a locally finite generalized simplicial complex is regular and paracompact. The next fact is a sort of converse of the last statement. 
\begin{fact}[Triangulation Theorem]\emph{\cite[Thm. II.4.4]{85DK}}\label{fact:triangulation} Let $M$ be a regular and paracompact \LD \ and let $\{A_j:j\in J\}$ be a locally finite family of admissible subspaces of $M$. Then, there exists an ld-triangulation of $M$ partitioning $\{ A_j:j\in J\}$, i.e., there is a locally finite generalized simplicial complex $K$ and a ld-homeomorphism $\psi:|K|\rightarrow M$, where $|K|$ is the realization of $K$, such that $\psi^{-1}(A_j)$ is the realization of a subcomplex of $K$ for every $j\in J$.
\end{fact}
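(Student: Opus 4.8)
The plan is to reduce the global triangulation to a countable sequence of triangulations of \emph{affine} definable sets, carried out inductively along an exhaustion of $M$, and then to glue these together. First I would reduce to the connected case. Since charts are definable sets and definable sets are locally definably connected, $M$ is locally connected, so its connected components are open; each component is easily checked to be admissible, and, being an admissible subspace, it is again regular, and it inherits paracompactness from a locally finite open definable cover of $M$. It therefore suffices to triangulate each component separately and take the disjoint union of the resulting complexes, so I may assume $M$ is connected. By Fact~\ref{fact:claudef}(2) a connected paracompact $M$ is Lindel\"of, hence admits an admissible covering by countably many open definable subspaces. Using paracompactness together with Fact~\ref{fact:claudef}(1) (closures of definable subspaces are definable), I would refine this, exactly as in the proof of Fact~\ref{fact:claudef}(3) given above, to an exhaustion $M_0\subset\overline{M_0}\subset M_1\subset\overline{M_1}\subset\cdots$ by open definable subspaces with each $\overline{M_n}$ definable and $M=\bigcup_n M_n$.

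Next I would make each stage affine. Because $M$ is regular, every definable subspace is affine by Remark~\ref{obs:regularity}; in particular each $\overline{M_n}$ may be viewed as a definable subset of some $R^{m_n}$. The family $\{A_j\}$ is locally finite, so only finitely many $A_j$ meet $\overline{M_n}$, and for each such $j$ the trace $A_j\cap\overline{M_n}$ is a definable subset. Thus on the affine definable set $\overline{M_n}$ I have a \emph{finite} list of distinguished definable subsets --- the traces of $M_{n-1}$, of $\overline{M_{n-1}}$, and of the relevant $A_j$ --- and the o-minimal triangulation theorem triangulates $\overline{M_n}$ compatibly with all of them.

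The heart of the argument, and the step I expect to be the main obstacle, is the inductive gluing. Triangulating the $\overline{M_n}$ independently is not enough: the complex produced at stage $n+1$ must restrict on $\overline{M_n}$ to the complex already chosen there, so that the stagewise homeomorphisms agree on overlaps and assemble into a single well-defined map. The ordinary triangulation theorem does not allow one to prescribe the triangulation on a closed definable subset, which is exactly the difficulty that the refinement in \cite{07pB} (the Normal Triangulation Theorem) is designed to overcome: from a given triangulation of a closed definable subset it produces a triangulation of the ambient definable set extending it and compatible with prescribed definable subsets. Applying this at each stage to extend the triangulation of $\overline{M_n}$ to $\overline{M_{n+1}}$ while carrying along the finitely many traces of the $A_j$, I obtain a nested sequence of finite simplicial complexes $K_0\subset K_1\subset\cdots$ together with compatible homeomorphisms onto the $\overline{M_n}$.

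Finally I would pass to the limit. The union $K=\bigcup_n K_n$ of the nested finite complexes is a generalized simplicial complex, and local finiteness of the exhaustion (each $\overline{M_n}$ meets only finitely many cells introduced at later stages) makes $K$ locally finite, so that $\St_{K}(\sigma)$ is a finite subcomplex for every $\sigma$ and $|K|$ carries the \LD-structure described before the statement. The glued maps assemble into a bijection $\psi\colon|K|\to M$ that is a homeomorphism on each $\St_{K}(\sigma)$ and hence an ld-homeomorphism. By construction each $\psi^{-1}(A_j)$ is a union of open simplices, i.e.\ the realization of a subcomplex, since at every stage the triangulation partitions the trace of $A_j$; local finiteness of $\{A_j\}$ ensures this compatibility is imposed only finitely often on each $\overline{M_n}$, so no obstruction arises in the limit, yielding the desired ld-triangulation.
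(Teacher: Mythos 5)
Your overall strategy --- reduce to the connected, Lindel\"of case and build the triangulation along a countable covering by definable pieces --- starts out like the paper's, but the step you yourself single out as the heart of the argument contains a genuine gap. The Normal Triangulation Theorem of \cite{07pB} does not do what you claim: its statement takes a simplicial complex $K$ and finitely many definable subsets of $|K|$, and produces a triangulation of $|K|$ \emph{itself} which partitions those sets, whose complex is a subdivision of $K$, and whose homeomorphism is definably homotopic to the identity. It is not a relative extension theorem: it gives no way to start from a triangulation of a closed definable subset $C$ of an affine definable set $X$ and produce a triangulation of $X$ whose restriction to $C$ \emph{equals} the given one. No such extension theorem is available over an arbitrary real closed field, and this is precisely the difficulty the paper's proof is built to avoid. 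Moreover, your nested exhaustion $\overline{M_0}\subset\overline{M_1}\subset\cdots$ makes the problem unavoidable: with the tools that do exist (triangulate $\overline{M_{n+1}}$ partitioning the images of the simplices of the stage-$n$ complex), the new triangulation only \emph{refines} the old one on $\overline{M_n}$, so every piece gets subdivided again at every later stage, the complexes $K_n$ never stabilize, and there is no locally finite limit complex $K=\bigcup_n K_n$. A further unused ingredient: your $\overline{M_n}$, viewed as affine via Remark \ref{obs:regularity}, need not be closed and bounded in its ambient $R^{m_n}$, so its triangulating complex need not even be a closed complex, which compounds the gluing problem.

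The paper's proof circumvents all of this in three moves that have no counterpart in your proposal. First, by Fact \ref{fembed} (identified in the paper as the hardest part, and never invoked by you) one may assume $M$ is partially complete; this guarantees that triangulations of closed definable subspaces are carried by closed simplicial complexes, which is what makes gluing possible at all. Second, instead of a nested exhaustion one takes a ``staircase'' covering $\{C_n:n\in\mathbb{N}\}$ of $M$ by closed definable subspaces with $C_n\cap C_m=\emptyset$ whenever $|n-m|>1$, so each piece meets only its two neighbours and the two overlap regions $C_n\cap C_{n-1}$ and $C_n\cap C_{n+1}$ are disjoint --- hence each $C_n$ is adjusted only finitely often. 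Third, one never extends a triangulation exactly: the triangulation of $C_n$ partitions the images of the simplices of the triangulation of $C_{n-1}$, hence refines it on the overlap, and the mismatch is repaired by replacing the triangulations with ones that are \emph{equivalent} (each refining the other) on the overlaps, via \cite[Lem.I.4.3]{85DK}, after which they are glued by the gluing principle for partially complete spaces (Fact \ref{fpegtrian}). To repair your write-up you would have to replace the extension step by this equivalence-and-gluing mechanism and the nested exhaustion by a staircase covering.
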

\begin{obs}\label{rmk:paratriang} In the Triangulation theorem above, and as in the definable case, we can find the generalized simplicial complex $K$ with its vertices tuples of real algebraic numbers. For, as in the classical theory, if we consider $K$ as an abstract complex and we denote by $\kappa$ the cardinal of the set of vertices of $K$, then we obtain a ``canonical realization'' of $K$ in $R^{\kappa}$ whose vertices are the standard basis of $R^{\kappa}$.  Moreover, if the \LD \ $M$ is defined over $A$, $A\subset R$, then we can find the locally definable homeomorphism $\psi$ defined over $A$.
\end{obs}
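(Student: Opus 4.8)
The plan is to prove the two refinements of Fact~\ref{fact:triangulation} separately: first, that the vertices of $K$ may be taken to be tuples of real algebraic numbers, and second, that when $M$ is defined over $A$ the triangulating homeomorphism $\psi$ may be taken over $A$ as well. The first is a purely combinatorial manoeuvre that never touches the construction of the triangulation, whereas the second forces one back into that construction; I expect the latter to be the real work.

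For the algebraic vertices, I would start from any ld-triangulation $\psi\colon |K|\to M$ partitioning $\{A_j:j\in J\}$ supplied by Fact~\ref{fact:triangulation}, forget the geometric positions of the vertices, and regard $K$ as an abstract locally finite simplicial complex on a vertex set $V$ of cardinality $\kappa$. Letting $\{e_v:v\in V\}$ be the standard basis of $R^{\kappa}$, form the canonical realization $|K|_{\mathrm{can}}\subset R^{\kappa}$ in which each abstract simplex $\{v_0,\dots,v_n\}$ is spanned by $e_{v_0},\dots,e_{v_n}$; these points are affinely independent, so this is a genuine realization, and its vertices have coordinates in $\{0,1\}$, hence are rational and in particular real algebraic. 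The identity bijection on vertices extends affinely on each simplex to a simplicial map $h\colon |K|_{\mathrm{can}}\to |K|$, and since $K$ is locally finite, $h$ restricts on each (finite) star to a definable homeomorphism, so $h$ is an ld-homeomorphism. Then $\psi\circ h\colon |K|_{\mathrm{can}}\to M$ is a triangulation with algebraic vertices, and it still partitions $\{A_j\}$ because $h$ sends the canonical realization of a subcomplex to the given realization of the same subcomplex, so $(\psi\circ h)^{-1}(A_j)$ is again a subcomplex.

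For the parameter control I would return to the proof of Fact~\ref{fact:triangulation}, namely the adaptation of \cite[Thm.~II.4.4]{85DK} carried out in the appendix. By Remark~\ref{obs:regularity} every definable subspace of the regular paracompact space $M$ is affine; the proof exhausts $M$ by a locally finite family of such affine definable subspaces and triangulates them compatibly, patching the resulting definable triangulations along their overlaps. Each local triangulation is produced by the definable o-minimal triangulation theorem \cite[Ch.~8]{98Dr}, which can be performed over the field of definition of its input. Hence, provided the partitioning family $\{A_j:j\in J\}$ is also defined over $A$ (as is implicit), every local triangulation and every patching homeomorphism is definable over $A$, and tracking this through the inductive construction yields $\psi$ over $A$. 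The two refinements are then compatible: running this argument first gives a triangulation over $A$, while the comparison map $h$ of the previous paragraph is defined over $\Q$ and hence over $A$, so $\psi\circ h$ is at once defined over $A$ and equipped with algebraic vertices.

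The genuine obstacle is the parameter-control step. It cannot be deduced from the statement of Fact~\ref{fact:triangulation} treated as a black box; one must verify that no ingredient of its proof --- neither the definable triangulation of each affine piece nor the patching across the locally finite exhaustion --- introduces parameters beyond those defining $M$ and the $A_j$. The algebraic-vertex refinement, by contrast, is the classical abstract-realization trick and is essentially free once the ld-homeomorphism property of $h$ is checked.
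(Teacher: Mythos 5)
Your proposal is correct and follows essentially the same route as the paper: the algebraic-vertex claim is obtained exactly via the canonical realization of the abstract complex $K$ in $R^{\kappa}$ with the standard basis as vertices (composing with the induced simplicial ld-homeomorphism), and the parameter claim is obtained, as the paper intends, by tracking the field of definition through the appendix proof of the Triangulation Theorem, whose affine triangulation and gluing steps preserve parameters. Your explicit verification that the comparison map $h$ is an ld-homeomorphism and your observation that the family $\{A_j\}$ must also be defined over $A$ are details the paper leaves implicit, but they do not change the argument.
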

As before, the proof of the above fact is just an adaptation of the semialgebraic one. However, because of the relevance of this result, we have included here a sketch of the proof for completeness (see Appendix \ref{sappepr}). Let us note that the hardest part of this proof, which may be of interest by itself, is to show that we can embed an \ld \ in another one with good properties. We say that an \LD \ $M$ is \textit{\textbf{partially complete}} if every closed definable subspace $X$ of $M$ is definably compact, i.e., every definable curve in $X$ is completable in $X$. 
\begin{fact}\emph{\cite[Thm. II.2.1]{85DK}}\label{fembed} Let $M$ be an \ld. Then, there exist an embedding of $M$ into a partially complete \ld, i.e, there is partially complete \LD \ $N$ and a ld-map $i:M\rightarrow N$ such that $i(M)$ is an admissible subspace of $N$ and $i:M\rightarrow i(M)$ is an ld-homeomorphism (where $i(M)$ has the \ld \ structure inherited from $M$).
\end{fact}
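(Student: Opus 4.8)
The plan is to build $N$ by completing each chart of $M$ to a definably compact set and then gluing these completions along the transition maps extended to the frontier. First I would reduce to the case in which every chart is bounded: post-composing each $\phi_i$ with the standard definable homeomorphism $R^{n(i)}\to(-1,1)^{n(i)}$ given coordinatewise by $t\mapsto t/(1+|t|)$, we may assume each $Z_i$ is a bounded definable subset of $R^{n(i)}$. Its topological closure $\overline{Z_i}$ is then closed and bounded, hence definably compact, and $Z_i$ is a relatively open definable subset of $\overline{Z_i}$.

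Next I would glue the family $\{\overline{Z_i}\}_{i\in I}$. For each pair $i,j$ the transition homeomorphism $\phi_{ij}$ between the open sets $\phi_i(M_i\cap M_j)$ and $\phi_j(M_i\cap M_j)$ has a graph whose closure in $\overline{Z_i}\times\overline{Z_j}$, by o-minimal curve selection and the boundary behaviour of definable maps, restricts over a relatively open definable set $\widehat{Z}_{ij}\subseteq\overline{Z_i}$ to the graph of a definable homeomorphism $\widehat{\phi}_{ij}:\widehat{Z}_{ij}\to\widehat{Z}_{ji}$ extending $\phi_{ij}$; concretely $\widehat{\phi}_{ij}$ sends $a$ to $b$ exactly when $a=\lim\phi_i\circ\gamma$ and $b=\lim\phi_j\circ\gamma$ for a common definable curve $\gamma$ in $M_i\cap M_j$. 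I would then let $N$ be the quotient of $\bigsqcup_{i\in I}\overline{Z_i}$ by the equivalence relation generated by the $\widehat{\phi}_{ij}$, with atlas the composites $\overline{Z_i}\hookrightarrow N$.

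Granting that this is a legitimate atlas, the remaining verifications are comparatively routine. That $N$ is a locally definable space amounts to the compatibility and cocycle identities for the maps $\widehat{\phi}_{ij}$, which follow from the observation that two definable curves in $M_i\cap M_j\cap M_k$ whose $\phi_i$- and $\phi_j$-limits agree also have agreeing $\phi_k$-limits. Partial completeness is then immediate: any definable subspace $X$ of $N$ lies in finitely many charts $\overline{Z_{i_1}},\dots,\overline{Z_{i_r}}$, and if $X$ is closed in $N$ its trace on each $\overline{Z_{i_l}}$ is a closed, hence definably compact, subset of the definably compact $\overline{Z_{i_l}}$, so $X$ itself is definably compact. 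Finally the inclusions $Z_k\hookrightarrow\overline{Z_k}$ induce an ld-map $i:M\to N$ that is injective and restricts on each chart to an inclusion; hence $i$ is an ld-homeomorphism onto its image, whose trace on each chart is a definable set, so $i(M)$ is an admissible subspace.

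The crux, and the step I expect to be the main obstacle, is to show simultaneously that the relation generated on $\bigsqcup_{i\in I}\overline{Z_i}$ is already an equivalence relation whose classes meet each $\overline{Z_i}$ in at most one point, and that the quotient $N$ is separated. This is exactly where the hypothesis that $M$ is separated is indispensable: if two distinct classes could not be separated in $N$, o-minimal curve selection would produce a definable curve in $M$ converging to two distinct points, contradicting the separatedness of $M$. Organising this separation argument uniformly over all pairs and triples of charts is the delicate heart of the proof, and is the point at which the semialgebraic argument of \cite[Thm. II.2.1]{85DK} has to be transferred with care.
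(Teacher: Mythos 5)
Your strategy --- compactify each chart and glue the closures $\overline{Z_i}$ along limit-extensions of the transition maps --- is not the strategy of the paper, and it breaks down at exactly the step you flag as the crux; the gap is not reparable by the separation argument you sketch. Concretely, let $M$ be the punctured open unit disc with two charts: $\phi_2=\mathrm{id}$ on $M_2=M$, and $\phi_1$ the polar-coordinate homeomorphism from the slit punctured disc $M_1$ onto $Z_1=(0,1)\times(0,2\pi)$. This is a regular, paracompact, Hausdorff atlas. The transition map $\phi_{12}:(r,\theta)\mapsto(r\cos\theta,r\sin\theta)$ sends every definable curve with $r\to 0$ to the origin, so your curve-limit relation relates \emph{every} point of the edge $\{0\}\times[0,2\pi]\subset\overline{Z_1}$ to the single point $0\in\overline{Z_2}$: the extension exists as a continuous map but collapses a whole segment, so there is no relatively open $\widehat{Z}_{12}\subseteq\overline{Z_1}$ meeting that edge on which the closure of the graph is the graph of a homeomorphism, and over $0\in\overline{Z_2}$ the closure of the graph is not even functional. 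You are then forced either to admit equivalence classes meeting $\overline{Z_1}$ in a whole segment (so the charts $\overline{Z_i}\hookrightarrow N$ are not injective and you have no atlas), or to exclude these frontier points from the gluing --- in which case $N$ is not Hausdorff: every neighbourhood of the class of $(0,\theta_0)$ contains points $(r,\theta_0)$ identified with points of $\overline{Z_2}$ arbitrarily close to the origin, so the classes of $(0,\theta_0)$ and of $0$ cannot be separated.

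Your proposed rescue --- deducing separation of $N$ from separation of $M$ by curve selection --- cannot work, because the non-separated pairs are \emph{new} frontier points that do not lie in $M$: in the example, a curve witnessing the failure of separation (constant angle, radius $t\to 0$) converges to the puncture, i.e.\ to no point of $M$, so the Hausdorffness of $M$ yields no contradiction. This difficulty is precisely why the proof in the paper (following \cite[Thm. II.2.1]{85DK}) takes a different route: it never completes the transition maps. Instead it takes a locally finite covering $\{M_i\}_{i\in I}$ of $M$ by open definable subspaces, ld-maps $g_i:M\rightarrow S_i$ to spheres collapsing $M\setminus M_i$ to the north pole, and ld-functions $f_i:M\rightarrow[0,1]$ with $f_i^{-1}(1)=\overline{M_i}$ and $f_i^{-1}(0)=M\setminus W_i$ (this is where regularity and paracompactness enter, via \cite[Th. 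I.4.15]{85DK}); the ambient space is the union $N=\bigcup_{i\in I}N_i$ of definably compact products $N_i\subset\prod_{i\in I}(S_i\times[0,1])$, glued along \emph{closed} intersections by Fact \ref{pegadoespacios} --- a gluing principle which, unlike yours, cannot destroy the Hausdorff property --- and $M$ embeds via $x\mapsto(g_i(x),f_i(x))_{i\in I}$. Your verifications of partial completeness and admissibility would be fine if your $N$ existed, but the construction of $N$ itself fails.
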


\textit{Henceforth, we denote a regular and paracompact \LD \ by} \textit{\textbf{\ld}}. Note that by Fact \ref{fact:claudef}.(2) a connected \ld \ is Lindel\"of.\\

We finish this section studying the behavior of \LD s with respect to model theoretic operators. Firstly, let us show that given an elementary extension $\mathcal{R}_1$ of an o-minimal structure $\mathcal{R}$ and given an \LD \ $M$ in $\mathcal{R}$, there is a natural \textit{\textbf{realization}} $M(\mathcal{R}_1)$ of $M$ over $\mathcal{R}_1$. For, denote by $\{\phi_i:M_i \rightarrow Z_i\}_{i\in I}$ the definable atlas of $M$ and consider the set $Z=\bigcup_{i\in I}Z_i/\sim$, where $x\sim y$ for $x\in Z_i$ and $y\in Z_j$ if and only if $\phi_{ij}(x)=y$. Note that we can define an \LD \ structure on $Z$ in a natural way and that $Z$ with this \LD \ structure is isomorphic to $M$ (see Definition \ref{def:locmap}). Now, the realization $Z(R_1)$ is just $\bigcup_{i\in I}Z_i(R_1)$ modulo the relation $\sim_{\mathcal{R}_1}$, where $x\sim_{\mathcal{R}_1} y$ for $x\in Z_i(R_1)$ and $y\in Z_j(R_1)$ if and only if $\phi_{ij}(\mathcal{R}_1)(x)=y$. On the other hand, note that given an o-minimal expansion $\mathcal{R}'$ of $\mathcal{R}$ and an \LD \ $M$ in $\mathcal{R}$, we can consider $M$ as an \LD \ in $\mathcal{R}'$.
\begin{prop}\label{prop:epanext}Let $\mathcal{R}'$ be an o-minimal expansion of $\mathcal{R}$ and let $\mathcal{R}_1$ be an elementary extension of $\mathcal{R}$. Let $M$ be an \LD \ in $\mathcal{R}$. Then,\\
\emph{(i)} $M$ is a connected \ld \ if and only if $M(\mathcal{R}_1)$ is a connected \ld ,\\
\emph{(ii)}  $M$ is regular in $\mathcal{R}$ if and only if it is regular in $\mathcal{R}'$,\\
\emph{(ii)}  $M$ is connected in $\mathcal{R}$ if and only if it is connected in $\mathcal{R}'$,\\
\emph{(iii)} $M$ is Lindel\"of in $\mathcal{R}$ if and only if it is Lindel\"of in $\mathcal{R}'$,\\
\emph{(iv)} $M$ is paracompact in $\mathcal{R}$ if and only if it is paracompact in $\mathcal{R}'$.
\end{prop}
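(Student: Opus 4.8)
The plan is to exploit two structural facts that persist when we pass from $\mathcal{R}$ to an expansion $\mathcal{R}'$. First, the underlying set of $M$, its atlas $(M_i,\phi_i)_{i\in I}$, the ambient sets $Z_i\subseteq R^{n(i)}$ and the transition maps $\phi_{ij}$ are literally unchanged, so the topology of $M$ (the unique one making each $M_i$ open and each $\phi_i$ a homeomorphism) is the same computed in either structure. Second, every $\mathcal{R}$-definable set is $\mathcal{R}'$-definable, so every $\mathcal{R}$-admissible (resp.\ definable) subspace is $\mathcal{R}'$-admissible (resp.\ definable), while the reverse inclusions fail in general. For the covering-type properties --- regularity, Lindel\"of and paracompactness --- a witnessing family of $\mathcal{R}$-definable data stays $\mathcal{R}'$-definable, and the conditions ``admissible covering'' and ``locally finite'' are purely set-theoretic; hence the implication from $\mathcal{R}$ to $\mathcal{R}'$ is immediate and all the content lies in the descent from $\mathcal{R}'$ to $\mathcal{R}$. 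Connectedness is monotone in the opposite direction (there are fewer admissible subspaces over $\mathcal{R}$), so for it the trivial implication runs from $\mathcal{R}'$ to $\mathcal{R}$ and the work is to pass from $\mathcal{R}$ to $\mathcal{R}'$.

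For connectedness and Lindel\"of the descent is short. The key observation is that a clopen subset of a chart $Z_i$ is a union of connected components of $Z_i$, and that the connected components of an $\mathcal{R}$-definable set are $\mathcal{R}$-definable and coincide with its topological components; thus they are \emph{absolute}. Hence if $U$ is an $\mathcal{R}'$-admissible clopen subspace of $M$, then each trace $\phi_i(U\cap M_i)$ is clopen in $Z_i$, so it is a union of $\mathcal{R}$-definable components and is $\mathcal{R}$-definable; therefore $U$ is already $\mathcal{R}$-admissible, which gives the nontrivial direction of connectedness. For Lindel\"of, given an $\mathcal{R}'$-admissible covering $\{V_n:n\in\Na\}$ by open $\mathcal{R}'$-definable subspaces, each $V_n$ lies in finitely many charts, so the set $I_0\subseteq I$ of chart indices so used is countable; since $\{V_n\}$ is admissible one checks directly that $\{M_i:i\in I_0\}$ is an admissible covering of $M$ by countably many open $\mathcal{R}$-definable subspaces. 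Regularity I would settle by noting that, via the equivalent topological formulation in the text, regularity coincides in either structure with topological regularity of the common underlying space; the only structure-sensitive clause, that the separating neighbourhoods be definable, is harmless because inside any chart a closed box is definable in every o-minimal expansion of the same $R$, so the definable refinement is available simultaneously over $\mathcal{R}$ and $\mathcal{R}'$.

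Paracompactness is where I expect the real work, and I would reduce it to the connected case. Charts are locally connected (small definable neighbourhoods in $Z_i$ are definably connected by the local conic structure), so $M$ is locally connected; its connected components are therefore clopen, are admissible subspaces (their trace on each chart is a union of the chart's finitely many components, hence definable), and form a \emph{locally finite} family. A direct argument then shows $M$ is paracompact if and only if each component is, all of this being absolute. On a connected component $C$ one has, by Fact \ref{fact:claudef}, that paracompactness is equivalent to ``$C$ is Lindel\"of and the closure of every definable subspace of $C$ is definable'': parts (1)--(2) give the forward direction and part (3) the converse. Lindel\"of is absolute by the previous paragraph, so it remains to see that the closure property is absolute, and here is the crucial point: each chart $M_i$ is itself a definable subspace, so for any definable subspace $X$ contained in the finite union $\bigcup_{i\in F}M_i$ one has $\overline{X}\subseteq\bigcup_{i\in F}\overline{M_i}$; if the closure property holds over $\mathcal{R}$ then each $\overline{M_i}$ meets only finitely many charts, hence so does $\overline{X}$, and its trace on each chart --- being the closure of a definable set --- is definable, so $\overline{X}$ is a definable subspace over $\mathcal{R}'$ as well, and symmetrically. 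This makes the closure property, and therefore paracompactness, absolute.

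Finally, for (i) I would use that a connected \ld\ is Lindel\"of (Fact \ref{fact:claudef}.(2)) and, by paracompactness, admits an exhaustion $M_0\subseteq\overline{M_0}\subseteq M_1\subseteq\overline{M_1}\subseteq\cdots$ by open definable subspaces as in the proof of Fact \ref{fact:claudef}.(3); by regularity each $M_n$ is affine (Remark \ref{obs:regularity}), i.e.\ a genuine definable set, so its realization $M_n(\mathcal{R}_1)$ is well defined. The relations ``$M_n$ is open'', ``$\overline{M_n}\subseteq M_{n+1}$'', ``$\{M_n\}$ covers $M$'', together with the chart-by-chart admissibility and definable connectedness conditions, are all first-order over the finitely many charts involved, hence preserved by the elementary extension $\mathcal{R}_1$; transferring the exhaustion shows $M(\mathcal{R}_1)$ is Lindel\"of, paracompact (Lindel\"of plus the closure property, the latter transferring as above), regular, and connected, with the converse by the same preservation run in reverse. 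The main obstacle, in both halves of the statement, is exactly this passage between the global notions and the countable or finite definable data witnessing them: once the component decomposition or the exhaustion is in place, absoluteness of the individual definable ingredients --- components, clopen sets, closures and boxes --- carries everything through.
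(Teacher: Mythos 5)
Your chart-counting argument for the Lindel\"of part and your skeleton for paracompactness (reduce to a connected piece, then combine Fact \ref{fact:claudef}.(1)--(3) with absoluteness of Lindel\"of and of the closure property) are correct and essentially identical to the paper's proof. However, the argument you use for connectedness --- and which you reuse to justify the component decomposition in the paracompactness step --- contains a step that fails. You claim that the definable connected components of an $\mathcal{R}$-definable set ``coincide with its topological components'', and later that charts are ``locally connected'', so that clopen traces are unions of definable components. Over a non-archimedean real closed field this is false: already $[0,1]\subset R$ is topologically disconnected (the infinitesimals form a clopen subset) and nowhere locally connected, so topological components are useless here, and an arbitrary clopen subset of $Z_i$ need not be definable in any structure. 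What your argument actually needs is that an $\mathcal{R}'$-\emph{definable} clopen subset of $Z_i$ is a union of the finitely many $\mathcal{R}$-\emph{definably} connected components of $Z_i$, together with the fact that these components remain definably connected in $\mathcal{R}'$; the proof of the latter goes through definable \emph{paths} (definable connectedness equals definable path connectedness, and $\mathcal{R}$-paths remain $\mathcal{R}'$-paths). That path mechanism is exactly what the paper invokes in packaged form by citing Fact \ref{fact:conexoporarcos}: a connected ld-space is path connected, and path connectedness trivially goes up to $\mathcal{R}'$. With that repair your route (showing every $\mathcal{R}'$-admissible clopen subspace is already $\mathcal{R}$-admissible) does work, and the same repair fixes the claim that the components of $M$ are clopen, admissible and locally finite.

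The second genuine gap is in part (i). In the forward direction, regularity of $M(\mathcal{R}_1)$ is asserted but not proved: transferring an exhaustion by open affine definable subspaces does not by itself give regularity, since having open affine charts never implies regularity (Robson's example, cited in the paper, is a definable space with affine charts that is not regular); one must actually manufacture closed definable neighbourhoods, e.g.\ using $\overline{M_n}\subset M_{n+1}$ together with regularity of the affine space $M_{n+1}(\mathcal{R}_1)$. More seriously, the converse cannot be obtained ``by the same preservation run in reverse'': the hypotheses on $M(\mathcal{R}_1)$ are witnessed by $\mathcal{R}_1$-definable data (coverings, exhaustions, neighbourhoods with parameters from $R_1$), and elementary transfer moves individual first-order statements about $\mathcal{R}$-definable sets, not the existence of infinite families of new definable witnesses, so nothing descends automatically to $\mathcal{R}$. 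The paper avoids both problems by proving (i) with the Triangulation Theorem (Fact \ref{fact:triangulation}): writing $M\cong|K|$ for a locally finite generalized simplicial complex identifies $M(\mathcal{R}_1)$ with the realization of the same complex over $R_1$, and regularity, paracompactness and connectedness of such realizations are combinatorial properties of $K$, independent of the field.
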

\begin{proof}(i) follows from the Triangulation Theorem (see Fact \ref{fact:triangulation}). (ii) is trivial and (iii) can be easily deduced from Fact \ref{fact:conexoporarcos}. Let us show that if $M$ is Lindel\"of in $\mathcal{R}'$ then $M$ is Lindel\"of in $\mathcal{R}$ (the converse is trivial). Indeed, let $(M_i,\phi_i)_{i\in I}$ be an atlas of $M$ in $\mathcal{R}$ and $\{U_n:n\in \mathbb{N}\}$ be a countable admissible covering  of $M$ by open definable subspaces in $\mathcal{R}'$ of $M$. Since each $U_n$ is a definable subspace, it is contained in a finite union of charts $M_i$. Therefore, there exists a countable subcovering of $\{M_i:i\in I\}$ which already covers $M$ and hence $M$ is Lindel\"of in $\mathcal{R}$. Now, we show that if $M$ is paracompact in $\mathcal{R}'$ then $M$ is paracompact in $\mathcal{R}$ (the converse is trivial). Suppose $M$ is paracompact in $\mathcal{R}'$. Without loss of generality we can assume that $M$ is connected. Therefore, by the above equivalences and Fact \ref{fact:claudef}.(2), $M$ is Lindel\"of in $\mathcal{R}$. Then, by Fact \ref{fact:claudef}.(3), it suffices to prove that for every definable subspace $X$ of $M$ in $\mathcal{R}$, its closure $\overline{X}$ is also a definable subspace of $M$ in $\mathcal{R}$. Since $M$ is paracompact in $\mathcal{R}'$, the latter is clear by Fact \ref{fact:claudef}.(1). 
\end{proof}
\section{Homology of locally definable spaces}\label{shomol}
We fix for the rest of this section an \ld \ $M$. We consider the abelian group $S_k(M)^{\mathcal{R}}$ freely generated by the \textit{singular locally definable simplices} $\sigma:\Delta_k\rightarrow M$, where $\Delta_k$ is the standard $k$-dimensional simplex in $R$. Note that since $\sigma$ is locally definable and $\Delta_k$ is definable, the image $\sigma(\Delta_k)$ is a definable subspace of $M$. As we will see, this fact allows us to use the o-minimal homology developed by A. Woerheide in \cite{96W} (see also \cite{07pB} for an alternative development of simplicial o-minimal homology). The boundary operator $\delta:S_{k+1}(M)^{\mathcal{R}}\rightarrow S_{k}(M)^{\mathcal{R}}$ is defined as in the classical case, making $S_*(M)^{\mathcal{R}}=\bigoplus_k S_k(M)^{\mathcal{R}}$ into a chain complex. We similarly define the chain complex of a pair of locally definable spaces. The graded group $H_*(M)^{\mathcal{R}}=\bigoplus_k H_k(M)^{\mathcal{R}}$ is defined as the homology of the complex $S_*(M)^{\mathcal{R}}$. Locally definable maps induce in a natural way homomorphisms in homology. Similarly for relative homology. Note that if $M$ is just a definable set then we obtain the usual o-minimal homology groups (see e.g. \cite{04EO}).

It remains to check that the functor we have just defined satisfies the locally definable version of the Eilenberg-Steenrod axioms. We shall check them making use of the corresponding axioms for definable sets through an adaptation of a classical result in homology that (roughly) states that the homology commutes with direct limits. Note that each definable subspace $Y\subset M$ is a definable regular space and hence affine (see Remark \ref{obs:regularity}). Therefore, the o-minimal homology groups of $Y$ as definable set are the ones we have just defined as (locally) definable space. Denote by $\mathcal{D}_M$ the set $$\{Y\subset M:Y \textrm{ definable subspace}\}.$$ Note that $M$ can be written as the directed union $M=\bigcup_{Y\in \mathcal{D}_M}Y$. Now, consider the direct limit $$\underrightarrow{lim}_{_{Y\in \mathcal{D}_M}} H_n(Y)^{\mathcal{R}}= \bigcup\hspace{-3.3mm}\cdot\hspace{2mm}_{_{Y\in \mathcal{D}_M}} H_{n}(Y)^{\mathcal{R}}/\sim,$$ where $c_1\sim c_2$ for $c_1\in H_n(Y_1)^{\mathcal{R}}$ and $c_2\in H_n(Y_2)^{\mathcal{R}}$, $Y_1,Y_2\in \mathcal{D}_M$, if and only if there is $Y_3\in \mathcal{D}_M$ with $Y_1,Y_2\subset Y_3$ such that $(i_1)_*(c_1)=(i_2)_*(c_2)$ for $(i_1)_*:H_n(Y_1)^{\mathcal{R}}\rightarrow H_n(Y_3)^{\mathcal{R}}$ and $(i_2)_*:H_n(Y_2)^{\mathcal{R}}\rightarrow H_n(Y_3)^{\mathcal{R}}$ are the homomorphisms in homology induced by the the inclusions. Then, we have a well-defined homomorphism $(i_{_Y})_*:H_n(Y)^{\mathcal{R}}\rightarrow H_n(M)^{\mathcal{R}}$ for each $Y\in \mathcal{D}_M$, where $i_{_Y}:Y \rightarrow M$ is the inclusion. Hence, there exists a well-defined homomorphism $$\psi:\underrightarrow{lim}_{_{Y\in \mathcal{D}_M}} H_n(Y)^{\mathcal{R}}\rightarrow H_n(M)^{\mathcal{R}},$$ where $\psi(\overline{c})=(i_{_Y})_*(c)$ for $c\in H_n(Y)^{\mathcal{R}}$.  In a similar way, given an admissible subspace $A$ of $M$, we have a well-defined homomorphism  $$\widetilde{\psi}:\underrightarrow{lim}_{_{Y\in \mathcal{D}_M}} H_n(Y,A\cap Y)^{\mathcal{R}}\rightarrow H_n(M,A)^{\mathcal{R}},$$ where $\widetilde{\psi}(\overline{c})=i_*(c)$ for $c\in H_n(Y,A\cap Y)^{\mathcal{R}}$ and $i:(Y,Y\cap A)\rightarrow (M,A)$ the inclusion map.
\begin{teo}\label{teo:limdirhom}\emph{(i)} $\psi:\underrightarrow{lim}_{_{Y\in \mathcal{D}_M}}H_n(Y)^{\mathcal{R}}\rightarrow H_n(M)^{\mathcal{R}}$ is an isomorphism.\\
\emph{(ii)} Let $A$ be an admissible subspace of $M$. Then $\widetilde{\psi}:\underrightarrow{lim}_{_{Y\in \mathcal{D}_M}} H_n(Y,A\cap Y)^{\mathcal{R}}\rightarrow H_n(M,A)^{\mathcal{R}}$ is an isomorphism.
\end{teo}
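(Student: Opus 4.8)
The plan is to deduce both statements from the single observation that every singular chain in $M$ has \emph{definable support}, so that the singular chain complex of $M$ is the directed union of the chain complexes of its definable subspaces; homology then commutes with this directed colimit. Concretely, any $z\in S_n(M)^{\mathcal{R}}$ is a finite $\mathbb{Z}$-linear combination $z=\sum_{j=1}^m n_j\sigma_j$ of singular locally definable simplices $\sigma_j:\Delta_n\to M$, and since each image $\sigma_j(\Delta_n)$ is a definable subspace of $M$, the finite union $Y:=\bigcup_{j=1}^m\sigma_j(\Delta_n)$ is again a definable subspace (definable subspaces are closed under finite unions). Thus $Y\in\mathcal{D}_M$, each $\sigma_j$ factors as a singular simplex into the definable space $Y$, and $z$ lies in the image of $S_n(Y)^{\mathcal{R}}\to S_n(M)^{\mathcal{R}}$. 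Since $\mathcal{D}_M$ is directed by inclusion and $H_n(Y)^{\mathcal{R}}$ is precisely the o-minimal homology of the affine definable space $Y$ (by Remark \ref{obs:regularity} and the identification made just before the theorem), this is the direct system appearing in the statement; note also that each inclusion $S_*(Y)^{\mathcal{R}}\hookrightarrow S_*(M)^{\mathcal{R}}$ is injective on generators, hence injective.

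For (i) I would prove $\psi$ surjective and injective directly at the chain level. \textbf{Surjectivity:} given $[z]\in H_n(M)^{\mathcal{R}}$ represented by a cycle $z$, choose $Y\in\mathcal{D}_M$ absorbing the finitely many images of the simplices of $z$ as above; then $z\in S_n(Y)^{\mathcal{R}}$, and since $\delta z=0$ in $S_{n-1}(M)^{\mathcal{R}}$ and $S_*(Y)^{\mathcal{R}}\hookrightarrow S_*(M)^{\mathcal{R}}$ is injective, already $\delta z=0$ in $S_{n-1}(Y)^{\mathcal{R}}$, so $z$ is a cycle in $Y$ and $\psi$ carries its class to $[z]$. \textbf{Injectivity:} given $\overline{c}$ with $\psi(\overline{c})=0$, represent it by a cycle $z\in S_n(Y)^{\mathcal{R}}$; then $z=\delta w$ for some $w\in S_{n+1}(M)^{\mathcal{R}}$. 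Enlarging $Y$ to a definable subspace $Y'\supseteq Y$ that also absorbs the images of the simplices of $w$, the identity $z=\delta w$ holds in $S_*(Y')^{\mathcal{R}}$, so $c$ maps to $0$ in $H_n(Y')^{\mathcal{R}}$; by the definition of the direct limit this forces $\overline{c}=0$.

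For (ii) I would run the same argument with the relative complexes $S_*(Y,A\cap Y)^{\mathcal{R}}=S_*(Y)^{\mathcal{R}}/S_*(A\cap Y)^{\mathcal{R}}$. First, these are defined: since $A$ is admissible and $Y$ is definable, $A\cap Y$ is a definable subspace (Definition \ref{deff:definable}), so $(Y,A\cap Y)$ is a pair of affine definable spaces with well-defined relative o-minimal homology. The support argument needs one extra relative check. For surjectivity, a relative cycle $z$ satisfies $\delta z\in S_{n-1}(A)^{\mathcal{R}}$; choosing $Y$ to absorb the images of the simplices of $z$, the simplices of $\delta z$ are faces of those of $z$ and hence have images in $A\cap Y$, so $\delta z\in S_{n-1}(A\cap Y)^{\mathcal{R}}$ and $z$ is a genuine relative cycle of $(Y,A\cap Y)$. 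For injectivity, if a relative cycle $z$ of $(Y,A\cap Y)$ becomes a relative boundary in $(M,A)$, write $z=\delta w+a$ with $w\in S_{n+1}(M)^{\mathcal{R}}$ and $a\in S_n(A)^{\mathcal{R}}$, and enlarge $Y$ to $Y'$ absorbing the images of the simplices of both $w$ and $a$; then $a\in S_n(A\cap Y')^{\mathcal{R}}$, the identity $z=\delta w+a$ holds in $(Y',A\cap Y')$, and so $c\mapsto 0$ in $H_n(Y',A\cap Y')^{\mathcal{R}}$, giving $\overline{c}=0$.

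The argument is essentially formal once the support observation is available, and I do not expect a serious obstacle; the only point that genuinely uses the hypotheses on $M$ rather than pure homological algebra is the passage to the definable subspaces $Y$, whose homology is the o-minimal homology of an \emph{affine} set (this uses regularity via Remark \ref{obs:regularity}, so that Woerheide's theory applies) together with the verification that a locally definable singular simplex with image in $Y$ factors through $Y$ with its inherited definable structure. If preferred, the whole proof can be phrased abstractly by observing that $S_*(M)^{\mathcal{R}}=\underrightarrow{lim}_{_{Y\in\mathcal{D}_M}}S_*(Y)^{\mathcal{R}}$, respectively $S_*(M,A)^{\mathcal{R}}=\underrightarrow{lim}_{_{Y\in\mathcal{D}_M}}S_*(Y,A\cap Y)^{\mathcal{R}}$, as chain complexes, and invoking the exactness of filtered colimits of abelian groups, which commutes with the homology functor; the explicit surjectivity and injectivity checks above are merely the unwinding of this fact.
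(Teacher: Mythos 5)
Your proof is correct and follows essentially the same route as the paper's: both rest on the observation that a finite chain has definable support (the union of the images of its simplices lies in some $Y\in\mathcal{D}_M$), and then verify surjectivity and injectivity of $\psi$ at the chain level by absorbing representing cycles and bounding chains into a suitable definable subspace. The only difference is one of detail: the paper dismisses (ii) with ``the proof is similar,'' whereas you carry out the relative checks (that $\delta z\in S_{n-1}(A\cap Y)^{\mathcal{R}}$ and that the correcting chain $a$ lands in $S_n(A\cap Y')^{\mathcal{R}}$) explicitly, which is a welcome elaboration rather than a divergence.
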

\begin{proof}(i) Firstly, we show that $\psi$ is surjective. Let $c\in H_n(M)^{\mathcal{R}}$ and $\alpha$ be a finite sum of singular ld-simplices of $M$ which represents $c$. Consider the definable subspace $X$ of $M$ which is the union of the images of the singular ld-simplices in $\alpha$. Hence $[\alpha]\in H_n(X)^{\mathcal{R}}$ and therefore it suffices to consider $\overline{[\alpha]}\in \underrightarrow{lim}_{_{Y\in \mathcal{D}_M}}H_n(Y)^{\mathcal{R}}$. Now, let us show that $\psi$ is injective. Let $\overline{c}\in \underrightarrow{lim}_{_{Y\in \mathcal{D}_M}}H_n(Y)^{\mathcal{R}}$, $c\in H_n(X)^{\mathcal{R}}$, $X\in \mathcal{D}_M$, such that $\psi(\overline{c})=0$. Since $\psi(\overline{c})=0$, there is a finite sum $\beta$ of singular ld-simplices of $M$ such that $\delta\beta=\alpha$. Consider the definable subspace $Z$ of $M$ which is the union of $X$ and the images of the singular ld-simplices in $\beta$. Then we have that $[\alpha]=0$ in $H_n(Z)^{\mathcal{R}}$ and therefore $\overline{c}=0$ in $\underrightarrow{lim}_{_{Y\in \mathcal{D}_M}}H_n(Y)^{\mathcal{R}}$. The proof of (ii) is similar.
\end{proof}
\begin{obs}\label{obs:directlimit}Let $M$ be an \ld \ and $\mathcal{D}$ a collection of definable subspaces of $M$ such that for every $Y\in \mathcal{D}_M$ there is $X\in \mathcal{D}$ with $Y\subset X$. Then Theorem \ref{teo:limdirhom} remains true if we replace $\mathcal{D}_M$ by $\mathcal{D}$.
\end{obs}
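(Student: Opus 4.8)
The plan is to observe that the hypothesis on $\mathcal{D}$ is precisely a \emph{cofinality} condition, and that a direct limit is unchanged when the index set is replaced by a cofinal subset. First I would check that $\mathcal{D}$, partially ordered by inclusion, is a directed set: given $X_1,X_2\in\mathcal{D}$, the union $X_1\cup X_2$ is again a definable subspace of $M$, hence $X_1\cup X_2\in\mathcal{D}_M$, and so by hypothesis there is $X_3\in\mathcal{D}$ with $X_1\cup X_2\subset X_3$. Thus the restriction to $\mathcal{D}$ of the inclusion system $\{H_n(Y)^{\mathcal{R}}\}$ is a genuine directed system, and the hypothesis says exactly that $\mathcal{D}$ is cofinal in $\mathcal{D}_M$. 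The canonical map $\underrightarrow{lim}_{Y\in\mathcal{D}}H_n(Y)^{\mathcal{R}}\rightarrow\underrightarrow{lim}_{Y\in\mathcal{D}_M}H_n(Y)^{\mathcal{R}}$ induced by the inclusion is then an isomorphism, and composing with the isomorphism $\psi$ of Theorem \ref{teo:limdirhom}.(i) yields the claim for the absolute case.

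Rather than invoke the abstract cofinality lemma, I would prefer to give the short adaptation of the proof of Theorem \ref{teo:limdirhom}, since it makes the role of the hypothesis transparent. For surjectivity of the map $\psi_{\mathcal{D}}:\underrightarrow{lim}_{Y\in\mathcal{D}}H_n(Y)^{\mathcal{R}}\rightarrow H_n(M)^{\mathcal{R}}$, take $c\in H_n(M)^{\mathcal{R}}$ represented by a finite sum $\alpha$ of singular ld-simplices; the union $X_0$ of their images is a definable subspace, so $X_0\in\mathcal{D}_M$, and by hypothesis there is $X\in\mathcal{D}$ with $X_0\subset X$, whence $[\alpha]\in H_n(X)^{\mathcal{R}}$ and $\overline{[\alpha]}$ maps to $c$. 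For injectivity, suppose $\psi_{\mathcal{D}}(\overline{c})=0$ with $c\in H_n(X)^{\mathcal{R}}$, $X\in\mathcal{D}$, represented by $\alpha$; exactly as in Theorem \ref{teo:limdirhom} there is a finite sum $\beta$ of singular ld-simplices of $M$ with $\delta\beta=\alpha$, the union $Z_0$ of $X$ with the images of the simplices in $\beta$ is a definable subspace, and the hypothesis provides $Z\in\mathcal{D}$ with $Z_0\subset Z$; then $[\alpha]=0$ already in $H_n(Z)^{\mathcal{R}}$, so $\overline{c}=0$. The relative statement (ii) is proved verbatim, replacing $H_n(Y)^{\mathcal{R}}$ by $H_n(Y,A\cap Y)^{\mathcal{R}}$ throughout and using admissibility of $A$ so that $A\cap Z$ is a definable subspace of each enlarged $Z$.

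The only point requiring care — and the sole place where the hypothesis on $\mathcal{D}$ is used — is the enlarging step: whenever the argument of Theorem \ref{teo:limdirhom} produces an auxiliary definable subspace (the union of finitely many simplex images, or of $X$ with such a union), one must re-absorb it into a member of $\mathcal{D}$, which is exactly what cofinality guarantees. Since finite unions of definable subspaces are again definable subspaces, these auxiliary sets always lie in $\mathcal{D}_M$, so the hypothesis applies. Accordingly I do not expect any genuine obstacle here; the content of the remark is entirely the cofinality bookkeeping, and no homological input beyond Theorem \ref{teo:limdirhom} is needed.
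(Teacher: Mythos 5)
Your proposal is correct and is essentially the paper's own justification: the remark is stated without proof precisely because the argument of Theorem \ref{teo:limdirhom} carries over verbatim, with the cofinality hypothesis on $\mathcal{D}$ supplying the enlarging step (absorbing the definable subspace generated by the simplex images into a member of $\mathcal{D}$), exactly as in your second paragraph. Your preliminary observation that finite unions keep $\mathcal{D}$ directed, and the alternative route via the standard cofinality lemma for direct limits, are both sound and add nothing beyond what the paper intends.
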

Now, with the above result, we verify the Eilenberg-Steenrod axioms.
\begin{prop}[Homotopy axiom] Let $M$ and $N$ be \ld s and let $A$ and $B$ be admissible subspaces of $M$ and $N$ respectively. If $f:(M,A)\rightarrow (N,B)$ and $g:(M,A)\rightarrow (N,B)$ are ld-homotopic ld-maps then $f_*=g_*$.
\end{prop}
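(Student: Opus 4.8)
The plan is to reduce the locally definable homotopy axiom to the already-established definable homotopy axiom via the direct-limit description of homology in Theorem \ref{teo:limdirhom}. Suppose $H:(M\times[0,1],A\times[0,1])\rightarrow(N,B)$ is an ld-homotopy with $H_0=f$ and $H_1=g$, where $[0,1]$ denotes the definable unit interval in $R$. I would first observe that for each definable subspace $Y\in\mathcal{D}_M$, the product $Y\times[0,1]$ is again a definable subspace of $M\times[0,1]$, and that the restriction $H|_{Y\times[0,1]}$ is a definable map into $N$; since its image is a definable subspace $W$ of $N$, we obtain a genuine definable homotopy between $f|_Y$ and $g|_Y$ as maps $(Y,A\cap Y)\rightarrow(W,B\cap W)$.

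Next I would exploit the naturality of the isomorphism $\psi$ (and its relative version $\widetilde{\psi}$) with respect to the induced maps. The key point is that $f_*$ and $g_*$ on $H_n(M,A)^{\mathcal{R}}$ are, under the identification $\widetilde{\psi}$, the direct limits of the maps $(f|_Y)_*$ and $(g|_Y)_*$ on $\underrightarrow{lim}_{Y}H_n(Y,A\cap Y)^{\mathcal{R}}$. By the definable homotopy axiom proved in \cite{07pBO}, applied to each definable homotopy $H|_{Y\times[0,1]}$, we get $(f|_Y)_*=(g|_Y)_*$ for every $Y\in\mathcal{D}_M$. Passing to the direct limit, the induced maps on $\underrightarrow{lim}_{Y}H_n(Y,A\cap Y)^{\mathcal{R}}$ coincide, and since $\widetilde{\psi}$ is an isomorphism, this forces $f_*=g_*$ on $H_n(M,A)^{\mathcal{R}}$.

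The step I expect to require the most care is setting up the direct limit of maps so that it is compatible with $\widetilde{\psi}$, i.e., checking that the ld-maps $f$ and $g$ induce on homology precisely the limit of the restricted maps. Concretely, one must verify that for each $Y\in\mathcal{D}_M$ the square relating $(i_Y)_*$ with the inclusion of the image $f(Y)$ into $N$ commutes, and that the targets can be arranged coherently: although $f(Y)$ and $g(Y)$ may differ, both sit inside the definable subspace $W\supset f(Y)\cup g(Y)$ arising from the image of $H|_{Y\times[0,1]}$, so the two restricted maps are comparable already at the finite stage $W\in\mathcal{D}_N$. Once this bookkeeping of image-subspaces is in place, Remark \ref{obs:directlimit} lets me replace $\mathcal{D}_N$ by the cofinal subfamily of such $W$'s if convenient, and the functoriality of the direct limit does the rest. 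A minor technical caveat is ensuring $A\cap Y$ ranges over an admissible family so that the relative groups $H_n(Y,A\cap Y)^{\mathcal{R}}$ are defined and the isomorphism $\widetilde{\psi}$ of Theorem \ref{teo:limdirhom}.(ii) applies, but this is immediate since $A$ is admissible and hence $A\cap Y$ is a definable subspace for each $Y$.
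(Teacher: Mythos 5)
Your proposal is correct and follows essentially the same route as the paper's proof: both reduce to the definable homotopy axiom of \cite{07pBO} via Theorem \ref{teo:limdirhom}, with the key common construction being the definable subspace $Z=F(X\times I)$ (your $W=H(Y\times[0,1])$) of $N$ that receives both restricted maps and carries the restricted definable homotopy. The only slight blemish is your optional appeal to Remark \ref{obs:directlimit} for the family of $W$'s, which need not be cofinal in $\mathcal{D}_N$; but as you note this step is not needed, since the functoriality squares $f\circ i_Y=\iota_W\circ f|_Y$ already yield $f_*\circ\widetilde{\psi}=g_*\circ\widetilde{\psi}$ and hence $f_*=g_*$.
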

\begin{proof}Let $[\alpha]\in H_n(M,A)^{\mathcal{R}}$. Consider the definable subspace $X$ of $M$ which is the union of the images of the singular ld-simplices in $\alpha$. By Theorem \ref{teo:limdirhom} and the homotopy axiom for definable sets, it is enough to prove that there is a definable subspace $Z$ of $N$ such that $f(X),g(X)\subset Z$ and that the definable maps $f|_X:(X,A\cap X)\rightarrow (Z,B\cap Z)$ and $g|_X:(X,A\cap X)\rightarrow (Z,B\cap Z)$ are definably homotopic. Let $F:(M\times I,A\times I) \rightarrow (N,B)$ be a ld-homotopy from $f$ to $g$. Then, it suffices to take $Z$ as the definable subspace $F(X\times I)$ of $N$ and the definable homotopy $F|_{X\times I}:(X\times I,A\cap X \times I) \rightarrow (Z,B\cap Z)$ from $f|_X$ to $g|_X$.\end{proof}
\begin{prop}[Exactness axiom]Let $A$ be an admissible subspace of $M$ and let $i:(A,\emptyset)\rightarrow (M,\emptyset)$ and $j:(M,\emptyset)\rightarrow (M,A)$ be the inclusions. Then the following sequence is exact
$$\cdots \rightarrow H_n(A)^{\mathcal{R}}\stackrel{i_*}{\rightarrow} H_n(M)^{\mathcal{R}} \stackrel{j_*}{\rightarrow} H_n(M,A)^{\mathcal{R}}\stackrel{\partial}{\rightarrow} H_{n-1}(A)^{\mathcal{R}} \rightarrow \cdots,$$
where $\partial:H_n(M,A)^{\mathcal{R}}\rightarrow H_{n-1}(A)^{\mathcal{R}}$ is the natural boundary map, i.e, $\partial [\alpha]$ is the class of the cycle $\partial \alpha$ in $H_{n-1}(A)^{\mathcal{R}}$.
\end{prop}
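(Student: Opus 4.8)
The plan is to deduce the exactness from the already-established exactness axiom for the o-minimal homology of definable (affine) sets (\cite{96W}, see also \cite{04EO}), transporting it along the direct-limit isomorphisms of Theorem \ref{teo:limdirhom}. The key point is that every finite singular chain has definable image, so all the relevant homological information already lives on the definable subspaces $Y\in\mathcal{D}_M$. Fix such a $Y$. By Remark \ref{obs:regularity} it is affine, and since $A$ is admissible the set $A\cap Y$ is a definable subspace of $Y$; hence the Eilenberg--Steenrod exactness axiom for definable sets furnishes a long exact sequence
$$\cdots\rightarrow H_n(A\cap Y)^{\mathcal{R}}\rightarrow H_n(Y)^{\mathcal{R}}\rightarrow H_n(Y,A\cap Y)^{\mathcal{R}}\stackrel{\partial}{\rightarrow} H_{n-1}(A\cap Y)^{\mathcal{R}}\rightarrow\cdots.$$

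First I would organize these sequences into a directed system indexed by $(\mathcal{D}_M,\subseteq)$: whenever $Y_1\subset Y_2$, the inclusion of pairs $(Y_1,A\cap Y_1)\hookrightarrow(Y_2,A\cap Y_2)$ induces a morphism between the corresponding long exact sequences, by naturality of the long exact sequence of a pair under definable maps. Since the direct limit over a directed index set is an exact functor on abelian groups, passing to $\underrightarrow{lim}_{_{Y\in\mathcal{D}_M}}$ of the whole system yields an exact sequence relating the limit groups $\underrightarrow{lim}\,H_n(A\cap Y)^{\mathcal{R}}$, $\underrightarrow{lim}\,H_n(Y)^{\mathcal{R}}$ and $\underrightarrow{lim}\,H_n(Y,A\cap Y)^{\mathcal{R}}$.

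Next I would identify the three limit terms with the desired groups. The terms $\underrightarrow{lim}\,H_n(Y)^{\mathcal{R}}$ and $\underrightarrow{lim}\,H_n(Y,A\cap Y)^{\mathcal{R}}$ are canonically isomorphic to $H_n(M)^{\mathcal{R}}$ and $H_n(M,A)^{\mathcal{R}}$ by parts (i) and (ii) of Theorem \ref{teo:limdirhom}. For the remaining term I would show that $\{A\cap Y:Y\in\mathcal{D}_M\}$ is cofinal in $\mathcal{D}_A$: any definable subspace $W$ of $A$ is also a definable subspace of $M$, because the charts of $A$ are the restrictions $M_i\cap A$, so $\phi_i(W\cap M_i)$ is definable and $W$ meets only finitely many charts; thus $W=A\cap W$ with $W\in\mathcal{D}_M$. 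By Remark \ref{obs:directlimit} applied to $A$ this gives $\underrightarrow{lim}_{_{Y\in\mathcal{D}_M}}H_n(A\cap Y)^{\mathcal{R}}\cong H_n(A)^{\mathcal{R}}$.

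The main thing to verify carefully -- the hard part, although it remains routine -- is naturality: that under these three isomorphisms the limits of the structure maps $H_n(A\cap Y)^{\mathcal{R}}\to H_n(Y)^{\mathcal{R}}$, $H_n(Y)^{\mathcal{R}}\to H_n(Y,A\cap Y)^{\mathcal{R}}$ and $\partial$ correspond respectively to $i_*$, $j_*$ and the locally definable boundary $\partial$. This holds because the isomorphisms $\psi$, $\widetilde{\psi}$ of Theorem \ref{teo:limdirhom} and the one just constructed for $A$ are all induced by the inclusions $Y\hookrightarrow M$, $A\cap Y\hookrightarrow A$, which commute with $i$, $j$ and with the boundary operator already at the chain level; a direct diagram chase then confirms the identifications. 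Combining the exactness of the limit sequence with these identifications yields the asserted long exact sequence of the pair $(M,A)$.
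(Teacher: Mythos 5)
Your proof is correct and follows essentially the same route as the paper's: the paper likewise applies the o-minimal exactness axiom to each pair $(Y,A\cap Y)$, $Y\in\mathcal{D}_M$, passes to the direct limit (which preserves exactness), and identifies the limit terms with $H_n(A)^{\mathcal{R}}$, $H_n(M)^{\mathcal{R}}$, $H_n(M,A)^{\mathcal{R}}$ via Theorem \ref{teo:limdirhom}. The details you spell out explicitly --- the cofinality of $\{A\cap Y:Y\in\mathcal{D}_M\}$ among the definable subspaces of $A$, and the naturality of the identifications with respect to $i_*$, $j_*$ and $\partial$ --- are precisely what the paper compresses into ``it is easy to check that the diagram commutes'' and ``the result then follows from Theorem \ref{teo:limdirhom}.''
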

\begin{proof}It is easy to check that for every $Y\in \mathcal{D}_M$ the following diagram commutes 
\begin{scriptsize}\begin{displaymath}\xymatrix{\cdots H_n(A\cap Y) \ar[r]^{(i_{_Y})_*} \ar[d] &  H_n(Y) \ar[r]^{(j_{_Y})_*} \ar[d] & H_n(Y,A\cap Y) \ar[r]^{\partial} \ar[d] & H_{n-1}(A\cap Y) \ar[r]^{(i_{_Y})_*} \ar[d] & H_{n-1}(Y) \ar[d]\cdots \\
\cdots H_n(A) \ar[r]^{i_*} &  H_n(M) \ar[r]^{j_*} & H_n(M,A) \ar[r]^{\partial}  & H_{n-1}( A)  \ar[r]^{i_*}  & H_{n-1}(M)\cdots}\end{displaymath}\end{scriptsize} 
\hspace*{-0.1cm}where $i_{_Y}:(A\cap Y,\emptyset)\rightarrow (Y,\emptyset)$ and $j_{_Y}:(Y,\emptyset)\rightarrow (Y,A\cap Y)$ are the inclusions (and the superscript $\mathcal{R}$ has been omitted). By the o-minimal exactness axiom the first sequence is exact for every $Y\in \mathcal{D}_M$. Hence, if we take the direct limit, the sequence remains exact. The result then follows from Theorem \ref{teo:limdirhom}.
\end{proof}
\begin{prop}[Excision axiom] Let $M$ be an \ld \ and let $A$ be an admissible subspace of $X$. Let $U$ be an admissible open subspace of $M$ such that $\overline{U}\subset int(A)$. Then the inclusion $j:(M-U,A-U)\rightarrow (M,A)$ induces an isomorphism $j_*:H_n(M-U,A-U)^{\mathcal{R}}\rightarrow H_n(M,A)^{\mathcal{R}}$. 
\end{prop}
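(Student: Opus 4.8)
The plan is to reduce the statement to the excision axiom for the o-minimal homology of definable sets, using Theorem \ref{teo:limdirhom} and Remark \ref{obs:directlimit}, exactly as in the proofs of the homotopy and exactness axioms above. Since $U$ is admissible and open, its complement $M-U$ is a closed admissible subspace of $M$; being a subspace of a regular and paracompact space it is again regular (regularity, in the separation form given before Remark \ref{obs:regularity}, is hereditary) and paracompact, hence an \ld, so that both Theorem \ref{teo:limdirhom} and Remark \ref{obs:directlimit} apply to it. The argument then amounts to checking that the definable excision hypothesis descends to each definable piece, applying definable excision fibrewise, and passing to the direct limit.

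First I would fix notation. For $Y\in\mathcal{D}_M$ put $A_Y:=A\cap Y$ and $U_Y:=U\cap Y$; since $A$ and $U$ are admissible and $Y$ is definable, both $A_Y$ and $U_Y$ are definable subspaces of $Y$, with $U_Y$ open in $Y$. Note that $Y\cap(M-U)=Y\setminus U_Y$ and $(A-U)\cap Y=A_Y\setminus U_Y$, so restricting the pair $(M-U,A-U)$ to $Y$ yields exactly $(Y\setminus U_Y,\,A_Y\setminus U_Y)$. Next I would verify the definable excision hypothesis relative to $Y$, namely $\overline{U_Y}^{\,Y}\subset\mathrm{int}^Y(A_Y)$. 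Writing $\overline{(\cdot)}^{\,M}$ and $\mathrm{int}^M$ for the operators in $M$, one has $\overline{U_Y}^{\,Y}=\overline{U\cap Y}^{\,M}\cap Y\subset\overline{U}^{\,M}\cap Y\subset\mathrm{int}^M(A)\cap Y$ by the hypothesis $\overline{U}\subset\mathrm{int}(A)$; and since $\mathrm{int}^M(A)\cap Y$ is open in $Y$ and contained in $A_Y$, it is contained in $\mathrm{int}^Y(A_Y)$. Thus $\overline{U_Y}^{\,Y}\subset\mathrm{int}^Y(A_Y)$, and the o-minimal excision axiom for definable sets gives that the inclusion $j_Y:(Y\setminus U_Y,\,A_Y\setminus U_Y)\rightarrow(Y,A_Y)$ induces an isomorphism $(j_Y)_*$ in every degree $n$.

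Finally I would assemble these isomorphisms. The maps $(j_Y)_*$ are induced by inclusions, hence natural in $Y$: for $Y\subset Y'$ in $\mathcal{D}_M$ the square built from $j_Y$, $j_{Y'}$ and the inclusion-induced homomorphisms commutes. Passing to the direct limit over the directed set $\mathcal{D}_M$ therefore produces an isomorphism $\varinjlim_Y (j_Y)_*$. By Theorem \ref{teo:limdirhom}(ii) its target is $H_n(M,A)^{\mathcal{R}}$. For the source, observe that every definable subspace $Y'$ of $M-U$ is itself a definable subspace of $M$ with $Y'=Y'\cap(M-U)$, so the family $\{Y\cap(M-U):Y\in\mathcal{D}_M\}$ satisfies the cofinality hypothesis of Remark \ref{obs:directlimit} for the \ld \ $M-U$; consequently the order-preserving assignment $Y\mapsto Y\cap(M-U)$ is cofinal and the source limit equals $H_n(M-U,A-U)^{\mathcal{R}}$. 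Since each $(j_Y)_*$ is the restriction of $j_*$ and the limit homomorphisms are those induced by the inclusions into $M-U$ and $M$, the commuting square relating $(j_Y)_*$ with the direct-limit maps identifies $\varinjlim_Y (j_Y)_*$ with $j_*$, so $j_*$ is an isomorphism.

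I expect the main obstacle to be the bookkeeping with closures and interiors relative to $Y$ — ensuring that the global condition $\overline{U}\subset\mathrm{int}(A)$ genuinely descends to every definable piece — together with the compatibility of the fibrewise excision isomorphisms with the direct-limit system, which is what lets them be assembled into $j_*$. The remaining point, that $M-U$ is a regular paracompact space to which Theorem \ref{teo:limdirhom} and Remark \ref{obs:directlimit} apply, is routine.
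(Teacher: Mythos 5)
Your proposal is correct and follows essentially the same route as the paper: reduce via Theorem \ref{teo:limdirhom}(ii) (plus the cofinality observation of Remark \ref{obs:directlimit} for the pair $(M-U,A-U)$) to each definable subspace $Y\in\mathcal{D}_M$, check that the excision hypothesis descends through the chain $\overline{U_Y}^{\,Y}\subset\overline{U}\cap Y\subset \mathrm{int}(A)\cap Y\subset \mathrm{int}^Y(A_Y)$, and apply the o-minimal excision axiom to the affine pieces. The paper's proof is just a terser version of this, leaving the direct-limit bookkeeping on the source side implicit.
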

\begin{proof}By Theorem \ref{teo:limdirhom}.(ii), it is enough to prove that for each definable subspace $Y$ of $M$ the inclusion $j_{_Y}:(Y-U_Y,A_Y-U_Y)\rightarrow (Y,A_Y)$ induces an isomorphism in homology, where $U_Y=U\cap Y$ and $A_Y=A\cap Y$. So let $Y$ be a definable subspace of $M$. Since $M$ is regular then we can regard $Y$ as an definable set. Now, $cl_{_{Y}}(U_{Y})\subset \overline{U\cap Y}\cap Y\subset \overline{U}\cap Y \subset int(A)\cap Y \subset int_{_{Y}}(A_Y)$. Finally, by the o-minimal excision axiom, $j_{_{Y}}$ induces an isomorphism in homology.
\end{proof}
The proof of the dimension axiom is trivial.
\begin{prop}[Dimension axiom] If $M$ is a one point set, then \linebreak $H_n(M)^{\mathcal{R}}=0$ for all $n>0$.
\end{prop}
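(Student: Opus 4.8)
The plan is to reduce the computation to the o-minimal (singular) homology of a point, which already satisfies the dimension axiom by Woerheide's work \cite{96W}. First I would observe that a one-point set $M$ is in particular a definable subspace of itself, and indeed a definable set (a single point of $R$). As remarked just after the definition of $H_*(M)^{\mathcal{R}}$, for a definable set the locally definable homology coincides with the usual o-minimal homology. Equivalently, and in the same spirit as the proofs of the other Eilenberg--Steenrod axioms above, one may note that $\mathcal{D}_M$ has $M$ itself as greatest element, so by Theorem \ref{teo:limdirhom}.(i) the isomorphism $\psi:\underrightarrow{lim}_{_{Y\in \mathcal{D}_M}}H_n(Y)^{\mathcal{R}}\rightarrow H_n(M)^{\mathcal{R}}$ identifies $H_n(M)^{\mathcal{R}}$ with the value of the directed system at its final object, namely $H_n(M)^{\mathcal{R}}$ computed with $M$ regarded as a definable set.

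Then I would simply invoke the o-minimal dimension axiom for definable sets (see \cite{96W}, \cite{04EO}): the o-minimal homology of a point vanishes in all positive degrees, so $H_n(M)^{\mathcal{R}}=0$ for $n>0$, as required.

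Alternatively, one can argue directly at the chain level, which is perhaps the cleanest route. Since $M$ is a single point, for each $k\geq 0$ there is exactly one singular ld-simplex $\sigma_k:\Delta_k\rightarrow M$, namely the constant map, so $S_k(M)^{\mathcal{R}}\cong \Z$. Every face of $\sigma_k$ equals $\sigma_{k-1}$, whence $\delta\sigma_k=\big(\sum_{i=0}^{k}(-1)^i\big)\sigma_{k-1}$, which is $\sigma_{k-1}$ when $k$ is even and $0$ when $k$ is odd. Thus $S_*(M)^{\mathcal{R}}$ is the standard chain complex of a point, and a one-line computation of kernels and images gives $H_0(M)^{\mathcal{R}}\cong \Z$ and $H_n(M)^{\mathcal{R}}=0$ for $n>0$.

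There is no genuine obstacle here: the statement is trivial, exactly as the text anticipates. The only point that must be checked, and which is immediate, is the identification of the locally definable homology of a one-point \ld \ with the o-minimal homology of a definable point; this holds because a point is a definable set and the singular ld-simplices into it are precisely the constant maps, so the locally definable setting introduces no new content in this case.
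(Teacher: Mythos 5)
Your proposal is correct and follows exactly the route the paper has in mind: the paper declares the proof trivial precisely because a one-point set is a definable set, for which the locally definable homology coincides with the o-minimal homology (as remarked after the definition of $H_*(M)^{\mathcal{R}}$), and the latter satisfies the dimension axiom by \cite{96W}. Your direct chain-level computation is an equally valid way of spelling out the same triviality.
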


Once we have a well-defined homology functor in the locally definable category, we now see that this functor has a good behavior with respect to model theoretic operators. The following result will be used in Section \ref{shomot} in the proof of the Hurewicz theorems for \ld s.
\begin{teo}\label{teo:invhomolld}The homology groups of \ld s are invariant under elementary extension and o-minimal expansions. 
\end{teo}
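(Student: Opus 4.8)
The plan is to deduce both invariance statements from the known invariance of the o-minimal homology of \emph{definable} sets under elementary extension and o-minimal expansion, transporting that information across the direct-limit presentation of $H_*$ furnished by Theorem \ref{teo:limdirhom} and Remark \ref{obs:directlimit}. Before computing, I would first check that the objects whose homology we compare are again \ld s, so that the functor is defined on them. For an o-minimal expansion $\mathcal{R}'$ this is immediate from Proposition \ref{prop:epanext}.(ii),(iv), since regularity and paracompactness are preserved. For an elementary extension $\mathcal{R}_1$ I would decompose $M$ into its connected components: each is a connected \ld, so by Proposition \ref{prop:epanext}.(i) its realization is again a connected \ld, and since homology carries the topological disjoint union of the components to the corresponding direct sum (compatibly with realization), this reduces the elementary-extension case to $M$ connected.

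The technical heart is a cofinality observation. Every definable subspace $Z$ of the realization $M(\mathcal{R}_1)$ lies in a finite union of charts $\bigcup_{j\in J}M_j(\mathcal{R}_1)$, and $Y:=\bigcup_{j\in J}M_j$ is itself a definable subspace of $M$ over $\mathcal{R}$ (a finite union of charts), with $Z\subset Y(\mathcal{R}_1)$; the same argument, reading $M_j$ in place of $M_j(\mathcal{R}_1)$, applies to $\mathcal{R}'$. Hence the family $\{Y(\mathcal{R}_1):Y\in\mathcal{D}_M\}$ (respectively $\mathcal{D}_M$ read inside $\mathcal{R}'$) is cofinal in the collection of all definable subspaces of the enlarged space.

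By Remark \ref{obs:directlimit} together with Theorem \ref{teo:limdirhom} we may therefore compute $H_n(M(\mathcal{R}_1))^{\mathcal{R}_1}$ as $\underrightarrow{lim}_{Y\in\mathcal{D}_M}H_n(Y(\mathcal{R}_1))^{\mathcal{R}_1}$ and $H_n(M)^{\mathcal{R}}$ as $\underrightarrow{lim}_{Y\in\mathcal{D}_M}H_n(Y)^{\mathcal{R}}$, over the \emph{same} directed index set $\mathcal{D}_M$, and similarly in the expansion case. Each $Y\in\mathcal{D}_M$ is affine by Remark \ref{obs:regularity}, so the invariance of o-minimal homology for definable sets supplies isomorphisms $H_n(Y)^{\mathcal{R}}\cong H_n(Y(\mathcal{R}_1))^{\mathcal{R}_1}$ and $H_n(Y)^{\mathcal{R}}\cong H_n(Y)^{\mathcal{R}'}$; passing to the direct limit over $\mathcal{D}_M$ then gives the theorem, the relative version following verbatim from Theorem \ref{teo:limdirhom}.(ii).

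I expect the main obstacle to be \emph{naturality}: to pass to the limit one needs the isomorphisms $\theta_Y$ above to commute with the transition maps induced by the inclusions $Y_1\hookrightarrow Y_2$, that is, to form a morphism of directed systems rather than a mere pointwise family of isomorphisms. This amounts to checking that the definable-set invariance is natural with respect to inclusions, which ultimately rests on choosing the triangulations underlying the definable invariance compatibly for a pair $Y_1\subset Y_2$ (a simultaneous triangulation), so that the combinatorial identification of the homology in the two structures respects the inclusion-induced maps. Extracting this compatibility from the cited definable results is the step that requires care; once it is in hand, the direct-limit functor delivers the isomorphism at once.
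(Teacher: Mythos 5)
Your proposal is correct and takes essentially the same route as the paper's proof: present $H_n(M)^{\mathcal{R}}$ and $H_n(M)^{\mathcal{R}'}$ (resp.\ $H_n(M(\mathcal{R}_1))^{\mathcal{R}_1}$) as direct limits over the \emph{same} directed family $\mathcal{D}_M$ via Theorem \ref{teo:limdirhom} and the cofinality Remark \ref{obs:directlimit}, regard each $Y\in\mathcal{D}_M$ as affine by Remark \ref{obs:regularity}, apply the definable invariance, and pass to the limit. The naturality you single out as the main obstacle is not actually a gap: the cited definable result \cite[Prop.3.2]{02BeO} already furnishes a \emph{natural} isomorphism $F_Y$, so the family $\{F_Y\}_{Y\in\mathcal{D}_M}$ is automatically a morphism of directed systems and no compatible-triangulation argument is required.
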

\begin{proof}We prove the invariance by o-minimal expansions. So let $\mathcal{R}'$ be an o-minimal expansion of $\mathcal{R}$ and let $M$ be an \ld \ in $\mathcal{R}$. Denote by $\mathcal{D}_M$ the collection of all definable subspaces of $M$. Recall that since $M$ is regular each $Y\in \mathcal{D}_M$ can be regarded as an affine definable space (see Remark \ref{obs:regularity}). Now, since the o-minimal homology groups are invariant under o-minimal expansions (see \cite[Prop.3.2]{02BeO}), for each $Y\in \mathcal{D}_M$ there is a natural isomorphism $F_{_Y}: H_n(Y)^{\mathcal{R}}\rightarrow H_n(Y)^{\mathcal{R}'}$. Hence, there exist a natural isomorphism $F:\underrightarrow{lim}_{_{Y\in \mathcal{D}_M}}H_n(Y)^{\mathcal{R}}\rightarrow \underrightarrow{lim}_{_{Y\in \mathcal{D}_M}}H_n(Y)^{\mathcal{R}'}$. By Theorem \ref{teo:limdirhom} and Remark \ref{obs:directlimit}, we have natural isomorphisms $\psi_1:\underrightarrow{lim}_{_{Y\in \mathcal{D}_M}}H_n(Y)^{\mathcal{R}}\rightarrow H_n(M)^{\mathcal{R}}$ and $\psi_2:\underrightarrow{lim}_{_{Y\in \mathcal{D}_M}}H_n(Y)^{\mathcal{R}'}\rightarrow H_n(M)^{\mathcal{R}'}$. Finally, we consider the natural isomorphism $\psi_2 \circ F \circ \psi_1^{-1}: H_n(M)^{\mathcal{R}}\rightarrow H_n(M)^{\mathcal{R}'}$. The proof of the invariance by elementary extensions is similar.\end{proof}
\begin{nota}\label{nota:homolsaomin}We will denote by $\theta$ the natural isomorphism  given by Theorem \ref{teo:invhomolld} between the semialgebraic and the o-minimal homology groups of a regular and paracompact locally semialgebraic space. Note that when we restrict the above $\theta$ to the definable category we obtain the natural isomorphism of \cite[Prop.3.2]{02BeO}.
\end{nota}
\section{Examples of locally definable spaces}\label{sexamples}
We begin this section discussing some natural examples of subsets of $R^n$ carrying a special \LD \ structure. In the second subsection we will consider $\bigvee$-groups as \LD s. Another important class of examples will be shown in Section \ref{shomotgrld}, where we prove the existence of covering maps for \ld s.

\subsection{Subsets of $R^n$ as ld-spaces}\label{sexRn}
\begin{ejemcur}\label{ex}Fix an $n\in \mathbb{N}$ and a collection $\{M_i\}_{i\in I}$ of definable subsets of $R^n$ such that $M_i\cap M_j$ is open in both $M_i$ and $M_j$ (with the topology they inherit from $R^n$) for all $i,j\in I$. Then, clearly $(M_i,id|_{M_i})_{i\in I}$ is an atlas for $M:=\bigcup_{i\in I}M_i$ and hence $M$ is an \LD.
\end{ejemcur}
Let $M\subset R^n$ be an  \LD \ as in Example \ref{ex}. Then it is easy to prove that a definable subspace of $M$ is a definable subset of $R^n$. However, consider the particular example where $M_i:=(-i,i)\subset R$ for $i\in \mathbb{N}$, so that $M=\bigcup_{i\in \mathbb{N}}M_i=Fin(R)$. Note that if $R=\mathbb{R}$ then $\mathbb{R}$ is not a definable subspace of $Fin(\mathbb{R})$ ($=\mathbb{R}$). This also shows that the structures of $\mathbb{R}$ as \LD \ and definable set are different. The latter example can be used also to show that the image of an admissible subspace of an \LD \ by an ld-map might not be admissible. For, take $R$ a non-archemedian real closed field and the ld-map $id:Fin(R)\rightarrow R:x\mapsto x$. Clearly, $Fin(R)$ is not an admissible subspace of $R$ since the admissible subspaces of $R$ are exactly the definable ones.

Nevertheless, we point out that if $M\subset R^n$ is as in Example \ref{ex} with each $M_i$ defined over $A$, $A\subset R$, $|A|<\kappa$, and $\mathcal{R}$ is $\kappa$-saturated, then a definable subset of $R^n$ contained in $M$ is a definable subspace of $M$. For, if $X\subset M$ is a definable subset, to prove that it is a definable subspace it suffices to show that it is contained in a finite union of charts $M_i$, which is clear by saturation.

In general, the topology of an \LD \ $M\subset R^n$ as in Example \ref{ex} does not coincide with the topology it inherits from $R^n$. Consider the following example in $\mathbb{R}$. Take $M_0:=\{ 0\}$ and $M_i:=\{\frac{1}{i}\}$ for $i\in \mathbb{N}\setminus \{0\}$. $M_0$ is open in the topology of $M$ as \LD \ but it is non-open with the topology that $M$ inherits from $\mathbb{R}$. It is well known that this also happen at the definable space level (see Robson's example of a non-regular semialgebraic space --Chapter 10 in \cite{98Dr}--). Moreover, Robson's example shows that even in the presence of saturation the topologies might not coincide.

Finally, let $M\subset R^n$ is as in Example \ref{ex} with each $M_i$ defined over $A$, $A\subset R$, $|A|<\kappa$. Furthermore, assume that $\mathcal{R}$ is $\kappa$-saturated and that the topology of $M$ as \LD \ coincide with the topology it inherits from $R^n$. Then let us note that in this case a definable subspace of $M$ (which as we have seen is also a definable subset of $R^n$) is definably connected if and only if it is connected.

Next, we  show that an \LD \ $M$ as in Example \ref{ex} might not be paracompact.
\begin{ejemcur}\label{expara} Let $M$ be as in Example \ref{ex} with $M_i=\{(x,y)\in R^2:y<0\}\cup \{(x,y)\in R^2:x=i\}$ for each $i\in \mathbb{N}$. The set $X=\{(x,y)\in R^2:y<0\}$ is a definable subspace of $M=\bigcup_{i\in \mathbb{N}} M_i\subset R^2$. However, $\overline{X}=X\cup \{(i,0)\in R^2: i\in \mathbb{N} \}$ is not a definable subspace of $M$. In particular, $M$ is not paracompact (see Fact \ref{fact:claudef}.(1)).
\end{ejemcur}

We finish by showing that another class of subsets that classically has been considered as ``locally semialgebraic subsets'' (for example, by S. Lojasiewicz) can be treated inside the theory of \LD s.
\begin{ejemcur}\label{ejLoj}Let $M$ be a subset of $R^n$ such that for every $x\in M$ there is an open definable neighbourhood $U_x$ of $x$ in $R^n$ with $U_x\cap M$ definable subset. Let $M_x:=U_x\cap M$ for each $x\in M$.  Then $M$ is an \LD \ with the atlas $(M_x,id|_{M_x})_{x\in M}$.
\end{ejemcur}
Using the notation of Example \ref{ejLoj}, it is clear that $M_x\cap M_y$ is definable and open in both $M_x$ and $M_y$ for all $x,y\in M$ and therefore $M$ is an \LD \ as in Example \ref{ex}. Moreover, the topology of $M$ as \LD \ equals the one it inherits from $R^n$.
\subsection{$\bigvee$-definable groups}\label{ssexVgr}In this section we will assume $\mathcal{R}$ is $\aleph_1$-saturated. The $\bigvee$-definable groups have been considered by several authors as a tool for the study of definable groups in o-minimal structures. Y. Peterzil and S. Starchenko give the following definition  in \cite{00PS}. A group $(G,\cdot)$ is a $\bigvee$-\textit{definable group} over $A$, $A\subset R$, $|A|<\aleph_1$, if there is a collection $\{X_i:i\in I\}$ of definable subsets of $R^n$ over $A$ such that $G=\bigcup_{i\in I}X_i$ and for every $i,j\in I$ there is $k\in I$ such that $X_i\cup X_j\subset X_k$ and the restriction of the group multiplication to $X_i\times X_j$ is a (not necessarily continuous) definable map into $R^n$. M. Edmundo introduces in \cite{06E} a notion of restricted $\bigvee$-definable group which he calls ``locally definable'' group. Our purpose in this section is to include both notions within the theory of \LD s.

In \cite{00PS}, some (topological) topics of $\bigvee$-definable groups are discussed to study the definable homomorphisms of abelian groups in o-minimal structures and, in particular, they prove the following result.
\begin{fact}\emph{\cite[Prop. 2.2]{00PS}}\label{prop:peter} Let $G\subset R^n$ be a $\bigvee$-definable group. Then, there is a uniformly
definable family $\{V_a : a \in S\}$ of subsets of $G$ containing the identity element $e$ and
a topology $\tau$ on $G$ such that $\{V_a : a \in S\}$ is a basis for the $\tau$-open neighbourhoods
of $e$ and $G$ is a topological group. Moreover, every generic $h \in G$ has an open
neighbourhood $U\subset N^n$ such that $U \cap G$ is $\tau$-open and the topology which $U \cap G$
inherits from $\tau$ agrees with the topology it inherits from $R$, and the topology $\tau$ is the unique one with the above properties.
\end{fact}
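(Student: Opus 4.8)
The plan is to mimic Pillay's construction of the manifold topology on a definable group, adapted to the directed system $G=\bigcup_{i\in I}X_i$. Set $d=\dim G=\max_{i}\dim X_i$ and fix a definable piece $X_k$ with $\dim X_k=d$. The first step is to produce a single definable ``generic chart'': using o-minimal cell decomposition (see \cite{98Dr}) I would find a definable subset $V_0\subseteq G$ of dimension $d$ which, with the topology inherited from $R^n$, is a definable $C^0$-submanifold homeomorphic to an open box in $R^d$. This exists because a $d$-dimensional definable set is, off a subset of strictly smaller dimension, such a submanifold of $R^n$. Note that inversion is locally definable (its graph restricted to $X_i\times X_j$ is $\{(x,y):xy=e\}$, which is definable since multiplication there is), so both operations can be handled piecewise.

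The heart of the argument is generic continuity of the group operations. Since the restriction of multiplication to each $X_i\times X_j$ is a definable map into $R^n$, the standard o-minimal fact that a definable map is continuous off a set of strictly smaller dimension gives, for each such restriction, a large open set of continuity. Genericity is available because $\mathcal R$ is $\aleph_1$-saturated and $G$ is defined over the countable set $A$, so there are elements of $G$ whose dimension of type over $A$ equals $d$. One upgrades continuity to the statement that, for a generic $g$, left translation $x\mapsto g\cdot x$ restricts to a homeomorphism between suitable shrinkings of $V_0$; and because every element of $G$ is a product of two generic elements, the same holds for every $g\in G$, so each left translation maps the ``nice'' part of $V_0$ homeomorphically onto its image.

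I would then define $\tau$ by declaring $W\subseteq G$ to be open iff $g^{-1}\cdot(W\cap gV_0)$ corresponds to an $R^n$-open subset of $V_0$ for every $g\in G$; equivalently, the translates $\{gV_0\}_{g\in G}$ (each a definable subset, being a translate of a definable set) are decreed to be open charts, with transition maps of the form $x\mapsto (g')^{-1}gx$, which are definable and continuous by the previous step, so $\tau$ is well defined. The uniformly definable basis $\{V_a:a\in S\}$ of neighbourhoods of $e$ is obtained by translating $V_0$ to a chart containing $e$ (pick a generic $g_0\in V_0$ and use $g_0^{-1}V_0$) and then taking a definable family of shrinkings indexed by a parameter $a$, using definable choice to keep the family uniform. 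That $(G,\tau)$ is a topological group is checked on charts: left translations are $\tau$-homeomorphisms by construction, while continuity of multiplication and of inversion at $(e,e)$ and $e$ respectively follows from the generic continuity above, continuity everywhere then being a consequence of homogeneity. A point $h$ is generic exactly when the chart around it is a homeomorphism onto an $R^n$-open subset of $G$, which yields the agreement of $\tau$ with the ambient topology near generic points. Uniqueness is formal: in any topological group a neighbourhood basis of the identity determines the whole topology by translation, so any $\tau$ with the stated basis at $e$ making $G$ a topological group must coincide with the one constructed.

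The main obstacle I expect is the generic-continuity step together with the verification that the translated charts glue consistently: one must show not merely that each piecewise-defined multiplication is continuous off a small set, but that these ``small'' exceptional sets can be absorbed uniformly across the directed family $\{X_i\}$ and removed using the group's homogeneity, so that left translation becomes a genuine $\tau$-homeomorphism rather than merely a generically continuous map. Keeping the neighbourhood family $\{V_a\}$ uniformly definable throughout -- rather than ending up with a merely locally definable basis -- is the delicate bookkeeping point.
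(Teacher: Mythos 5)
First, a point of comparison: the paper does not prove this statement at all; it is quoted as a Fact from Peterzil and Starchenko \cite[Prop.~2.2]{00PS} (the only ingredient of its proof that the paper ever uses is their internal Claim~2.3, cited in the proof of Lemma~\ref{lema:entorno}). So your attempt can only be measured against the proof in \cite{00PS}, whose overall strategy (Pillay-style: generic continuity of the piecewise-definable operations, translation of germs at a generic point, formal uniqueness) you do share. But your execution has a genuine gap, and it sits exactly at the step you yourself flag as ``the main obstacle''. You want the translates $gV_0$ to form an atlas, which forces every transition map $x\mapsto (g')^{-1}gx$ to be a homeomorphism between \emph{ambient}-open subsets of $V_0$ at \emph{every} point of the overlap. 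Your justification -- write $h=(g')^{-1}g$ as a product of two generics, $L_h=L_{h_1}\circ L_{h_2}$ -- does not deliver this: cell decomposition gives continuity of $\mu|_{X_i\times X_j}$ only off an $A$-definable set of dimension $<\dim X_i+\dim X_j$, hence continuity only at pairs that are generic in $X_i\times X_j$. At an arbitrary point $x\in V_0$ (say of dimension $0$ over $A$) the pair $(h_2,x)$ has dimension at most $d$ over $A$, so nothing prevents it from lying in the discontinuity set; the exceptional sets depend on $h$, and there are infinitely many $h\in G$ to absorb, with no uniform bound. Indeed your atlas would force $\tau$ to agree with the ambient topology on all of $V_0$ and on all of its translates, which is strictly more than the statement asserts: the theorem only promises agreement near \emph{generic} points, and that is all that is true in general.

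This is precisely why the proof in \cite{00PS} does not build charts. Instead it defines $\tau$ through the standard neighbourhood-filter criterion for group topologies: the basic neighbourhoods of $e$ are translates $g^{-1}B(g,\delta)$ of ambient-ball germs at a generic $g$ (balls taken inside a definable piece of maximal dimension, which is what makes the family uniformly definable), and one checks well-definedness under change of generic, $VV'\subseteq U$, $V^{-1}\subseteq U$, and $hVh^{-1}\subseteq U$ for each fixed $h\in G$. Every one of these conditions involves germs at generic points only, and each instance may be verified after choosing a fresh generic over the finitely many elements involved -- available by $\aleph_1$-saturation since $|A|<\aleph_1$. Note also that the conjugation condition cannot be skipped: since $G$ is not assumed abelian, your plan to get joint continuity of multiplication from ``continuity at $(e,e)$ plus homogeneity'' fails, because a left-translation-invariant topology with $\mu$ continuous at $(e,e)$ need not make $G$ a topological group. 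If you want to salvage the atlas picture, the honest route is the one this paper takes \emph{after} quoting the Fact: first obtain $\tau$ by the filter argument, then use Claim~2.3 of \cite{00PS} (as in Lemma~\ref{lema:entorno} and Theorem~\ref{Vgrldgr}) to produce definable charts around generic points and translate them; chart compatibility then comes for free because $\tau$ is already known to be a group topology.
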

Because of the above fact is natural to introduce the following concept.
\begin{deff}\label{dldgr}We say that a group $(G,\cdot)$ is an \textbf{ld-group} if $G$ is an \LD \ and both $\cdot:G\times G \rightarrow G$ and $^{-1}:G\rightarrow G$ are ld-maps. If $G$ is moreover paracompact as \LD \ we say that $G$ is an \textbf{LD-group} (note that since every ld-group is a topological group it is regular).
\end{deff}
We will see that every $\bigvee$-definable group (with its group topology) is an ld-group. We begin with the following result.
\begin{lema}\label{lema:entorno}Let $G\subset R^n$ a $\bigvee$-definable group over $A$ and let $\tau$ be the topology of Fact \ref{prop:peter}. Then, for every generic $g \in G$ there is a definable \begin{footnotesize}\emph{OVER}\end{footnotesize} $A$ subset $U_g\subset G$  which is $\tau$-open and such that the topology which $U_g$ inherits from $\tau$ agrees with the topology it inherits from $R^n$.
\end{lema}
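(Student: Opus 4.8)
The plan is to upgrade Fact \ref{prop:peter}, which already produces a good neighbourhood of each generic point but only over some parameter set $B \supseteq A$, to one defined over $A$. Two ingredients drive this: the \emph{canonicity} of $\tau$ (it is the unique topology with the listed properties), which will force it to be invariant under $\mathrm{Aut}(\mathcal{R}/A)$; and the fact that genericity over $A$ is governed by $A$-definable dimension, so that the locus of points admitting a good neighbourhood, being $A$-definable, is forced to contain $g$.

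First I would show that $\tau$ is $A$-invariant. Each chart $X_i$ is definable over $A$, so any $\sigma \in \mathrm{Aut}(\mathcal{R}/A)$ fixes every $X_i$ setwise, hence maps $G$ onto $G$, preserves the ($A$-definable) multiplication, fixes $e$, and --- being an automorphism of the ordered field $\mathcal{R}$ --- preserves the ambient $R^n$-topology. Consequently $\sigma_*\tau$ is again a topology making $G$ a topological group, with $\{\sigma(V_a)\}$ a uniformly definable basis at $e$, for which every generic point still has an $R^n$-neighbourhood on which the two topologies agree (genericity being $\mathrm{Aut}(\mathcal{R}/A)$-invariant). By the uniqueness clause of Fact \ref{prop:peter}, $\sigma_*\tau = \tau$. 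Thus every set built from $\tau$ together with parameters from $A$ is $\mathrm{Aut}(\mathcal{R}/A)$-invariant; and since $\mathcal{R}$ is an o-minimal expansion of a real closed field it has definable choice and eliminates imaginaries, so such an invariant definable set is in fact definable over $A$ (passing to a saturated extension, if needed, to run the invariance argument and then descending, since the sets in question are first-order).

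Now fix a chart $X_i$ over $A$ with $g \in X_i$ and $\dim X_i = \dim G =: d$ (possible since $g$ is generic), and let $D \subseteq X_i$ be the set of $x$ admitting an open box $U$ in $R^n$ with $x \in U$, with $U \cap G$ being $\tau$-open, and with $\tau$ and $R^n$ inducing the same topology on $U \cap G$. Replacing ``there is an open definable $U$'' by ``there is an open box'' makes this a first-order condition, and by the previous paragraph $D$ is definable over $A$. By Fact \ref{prop:peter}, $D$ contains every point of $X_i$ that is generic in $G$; hence $X_i \setminus D$ contains no point of dimension $d$ over $A$, so $\dim(X_i \setminus D) < d$, and therefore $g \in D$. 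Finally, by definable choice over $A$ I would select, uniformly in $x \in D$, such a box $U_x$, and set $U_g$ to be (the trace on $G$, inside the chart $X_i$, of) $\bigcup_{x \in D} U_x$: this is definable over $A$, contains $g$, is $\tau$-open as a union of $\tau$-open sets, and the two topologies agree on it because agreement is a local property holding on each $U_x \cap G$.

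The main obstacle is the passage from ``$\tau$ is $A$-invariant'' to ``$D$ is $A$-definable'': this is exactly where the uniqueness of $\tau$ and the elimination of imaginaries are essential, for without canonicity the family $\{V_a\}$, and hence $D$, could only be placed over some $B \supsetneq A$, and $g$ need not be generic over $B$, so the dimension argument would collapse. A secondary, routine point is the chart bookkeeping needed to guarantee that the assembled set is a genuine $\tau$-open definable subspace of $G$ rather than merely a subset of $R^n$; this is handled using the directedness of the charts together with the $\aleph_1$-saturation of $\mathcal{R}$.
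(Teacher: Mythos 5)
Your strategy has a genuine gap at its central step: the claim that the set $D$ is definable. The automorphism-invariance argument (via the uniqueness clause of Fact \ref{prop:peter}) can only lower the parameter set of a set that is already known to be definable over \emph{some} parameters; it cannot create definability. And $D$ is not definable, even with arbitrary parameters: $G$ itself is not a definable subset of $R^n$ (it is a directed union of infinitely many charts $X_i$), the group multiplication is definable only chart-by-chart, and $\tau$ is not a definable object. When you unwind the conditions ``$U\cap G$ is $\tau$-open'' and ``$\tau$ and $R^n$ induce the same topology on $U\cap G$'' through the uniformly definable basis $\{V_a:a\in S\}$, they become statements of the form ``for all $y\in U\cap G$, there exists $a\in S$ such that $yV_a\subset U$\,'', i.e.\ quantifications over $U\cap G$ --- and $U\cap G$ typically meets infinitely many charts and is not a definable set, so this is an infinite conjunction over the charts, not a first-order condition. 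Replacing definable neighbourhoods by boxes removes the quantification over neighbourhoods $U$, but does nothing about this quantification over $G$. Consequently the dimension argument (``$X_i\setminus D$ contains no generic point, hence $\dim(X_i\setminus D)<d$, hence $g\in D$'') has no $A$-definable set to be applied to, and the proof collapses precisely at the point you yourself flagged as the main obstacle.

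The paper's proof avoids any soft canonicity argument and instead makes the ``good locus'' definable over $A$ by keeping every quantifier inside finitely many charts. It fixes a chart $X_i$ of maximal dimension with $X_i^{-1}=X_i$ and a chart $X_j$ with $X_iX_iX_i\subset X_j$, and introduces, for $a\in X_i$, the definable set $W_a\subset X_i$ of those $x$ for which the translates $xa^{-1}B(a,\delta)$ and the balls $B(x,\epsilon)$ (with $B(x,\epsilon)=\{y\in X_i:|y-x|<\epsilon\}$, so all sets involved lie in $X_j$) are mutually cofinal; this is first-order over $A\cup\{a\}$. It then sets $V=\{y\in X_i: W_y \textrm{ is large in } X_i\}$, which is definable over $A$ because largeness is definable in families, and takes $U_g:=\mathrm{int}_{X_i}(V)$; genericity of $g$ over $A$ gives $g\in U_g$. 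The topological requirements on $U_g$ ($\tau$-openness and agreement of the two topologies) are then verified by hand, using Claim 2.3 of \cite[Prop. 2.2]{00PS} applied to an element $h\in X_i$ generic over the relevant parameters. In short, the explicit construction you hoped to bypass via uniqueness of $\tau$ is exactly what produces a definable-over-$A$ object in the first place; some version of that hands-on, chart-bound work is unavoidable.
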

\begin{proof}By Fact \ref{prop:peter} it suffices to prove that the parameter set $A$ is preserved. Write $G=\bigcup_{i\in I}X_i$. The dimension of $G$ is defined as $max\{dim(X_i):i\in I\}$. Fix an $X_i$ of maximal dimension and a generic $g\in X_i$. We can assume that $X_i^{-1}=X_i$. Let $X_j$ be such that $X_iX_iX_i\subset X_j$. All the definable sets we shall consider in the proof are definable subsets of $X_j$. For each $a\in X_i$ we consider the definable set\\

\vspace{-0.3cm}
$W_a=\{x\in X_i: \forall \delta>0 \exists \epsilon>0 \ B(x,\epsilon)\subset xa^{-1}B(a,\delta)\wedge$\\

\vspace{-0.3cm}
\hspace*{5cm} $\forall \epsilon>0 \exists \delta>0 \ xa^{-1}B(a,\delta)\subset B(x,\epsilon) \},$\\ 

\vspace{-0.3cm}
\hspace*{-0.6cm}where $B(x,\epsilon)=\{y\in X_i:|y-x|<\epsilon \}$. We also consider the definable set $$V=\{y\in X_i: W_y \textrm{ is large in } X_i \}.$$ 

\hspace{-0.6cm}By Claim 2.3 of \cite[Prop. 2.2]{00PS}, for every $h\in X_i$ generic over $A$ and $g$ we have that  $h\in W_g$ and therefore $g\in V$. Moreover, since $g$ is generic, we have that $g\in U:=int_{_{X_i}}(V)$ (the interior with respect to the topology of the ambient space $R^n$), which is a definable over $A$ subset of $X_i$. Fix $a\in U$. We shall prove that\\

\vspace{-0.3cm}
\hspace*{-0.6cm}(i) for every $\epsilon>0$ there is $\delta>0$ such that  $ag^{-1}B(g,\delta)\subset B(a,\epsilon)$, and\\
(ii) for every $\epsilon>0$  there is $\delta>0$ such that  $ga^{-1}B(a,\delta)\subset B(g,\epsilon)$.

\vspace{0.2cm}
\hspace{-0.6cm}Granted (i) and (ii), note that $U_g:=U$ is the desired neighbourhood of $g$. Let us show (i). Consider a generic $h\in X_i$ over $A,a$.
Since $h\in W_a$, there is $\widetilde{\delta}>0$ such that $ah^{-1}B(h,\widetilde{\delta})\subset B(a,\epsilon)$. By Claim 2.3 of \cite[Prop. 2.2]{00PS}, there is $\delta>0$ such that $g^{-1}B(g,\delta)\subset h^{-1}B(h,\widetilde{\delta})$. Hence $ag^{-1}B(g,\delta)\subset ah^{-1}B(h,\widetilde{\delta})\subset B(a,\epsilon)$. The proof of (ii) is similar.
\end{proof}
The following technical fact can be easily deduced from the proof of \cite[Prop 2.11]{06E}.
\begin{fact}\label{fedmundo}Let $G=\bigcup_{i\in I}X_i$ be an $\bigvee$-definable group over $A$. Let $V=\bigcup_{k\in \Lambda}V_k$ (directed union) be a subset of $G$ such that each $V_k$ is definable over $A$ and $V$ is large in $G$, i.e, every generic point of $G$ is contained in $V$. Then there is a collection of elements $\{b_j\in G:j\in J\}$ with each $b_j$ definable over $A$, such that each $X_i$ is contained in a finite union of subsets of the form $b_jV_k$. In particular, $G=\bigcup_{j\in J}b_jV$.
\end{fact}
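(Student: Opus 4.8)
The plan is to reduce the statement to a pointwise covering assertion and then extract finiteness by saturation. Write $d=\dim G=\max_i\dim X_i$ and $W=G\setminus V$. The closing clause $G=\bigcup_j b_jV$ is immediate from the covering of the pieces, since $G=\bigcup_i X_i$ and each $b_jV_k\subseteq b_jV\subseteq G$; so it suffices, for each fixed $i$, to find finitely many $A$-definable $b\in G$ and indices $k$ with $X_i\subseteq\bigcup b\,V_k$. The heart of the matter is the following claim, which I will call $(\star)$: for every $x\in G$ there exist an $A$-definable $b\in G$ and a $k\in\Lambda$ with $b^{-1}x\in V_k$, equivalently $x\in bV_k$.

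First I would show that $\dim W<d$. On a piece $X_j$ the trace $W\cap X_j=\bigcap_k(X_j\setminus V_k)$ is relatively type-definable over $A$, and for a maximal piece ($\dim X_j=d$) the directed family $\{X_j\setminus V_k\}_k$ is decreasing, so its dimensions stabilize at some value $e\le d$; were $e=d$, saturation would produce a point $w\in\bigcap_k(X_j\setminus V_k)=W\cap X_j$ with $\dim(w/A)=d$, that is, a generic point of $G$ lying outside $V$, contradicting largeness. Hence $\dim(W\cap X_j)<d$ for every $j$ (trivially so when $\dim X_j<d$), and therefore $\dim W<d$. Since the group operations are, on each piece, definable bijections onto their images, they preserve dimension, so the set $S_x:=\{b\in G:\ b^{-1}x\in W\}=xW^{-1}$ again satisfies $\dim S_x=\dim W<d$.

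Now fix a maximal piece $X_{i_0}$ (so $\dim X_{i_0}=d$) and invoke the density-in-dimension of $\mathrm{dcl}(A)$: if $Y$ is $A$-definable with $\dim Y=d$ and $Z$ is definable over any parameters with $\dim Z<d$, then $Y$ contains a point of $\mathrm{dcl}(A)$ lying outside $Z$. Applying this with $Y=X_{i_0}$ and $Z=S_x$ yields an $A$-definable $b\in X_{i_0}\setminus S_x$; then $b^{-1}x\notin W$, i.e. $b^{-1}x\in V$, and by directedness $b^{-1}x\in V_k$ for some $k$, which proves $(\star)$. Finally, the family $\{bV_k:\ b\in\mathrm{dcl}(A)\cap G,\ k\in\Lambda\}$ consists of $A$-definable sets and, by $(\star)$, covers $X_i$; the complements $X_i\setminus bV_k$ are $A$-definable, so if no finite subfamily covered $X_i$ these complements would have the finite intersection property and, by $\aleph_1$-saturation, would have a common point, contradicting $(\star)$. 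Thus finitely many suffice for each $i$, and collecting the resulting $b$'s over all $i$ gives the desired collection $\{b_j\}$.

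I expect the genuine obstacle to be the extraction of an \emph{$A$-definable} translating element $b$ rather than merely a generic one: genericity (dimension $d$ over $A$) is the opposite of $A$-definability, so one cannot simply ``translate $x$ to a generic point'' by an $A$-definable element. The density-in-dimension statement for $\mathrm{dcl}(A)$ is exactly what bridges this gap, and proving it is the technical core: by cell decomposition over $A$ one reduces to an open box, and a fiber-dimension induction shows that the full-dimensional grid $\mathrm{dcl}(A)^d\cap(\text{box})$ cannot be contained in any lower-dimensional definable set, even one defined with the extra parameter $x$. Combined with the bound $\dim W<d$, this converts the largeness hypothesis into the existence of honest $A$-definable translates. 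A secondary point needing care is the use of $\aleph_1$-saturation in the two steps above, which is legitimate because $|A|<\aleph_1$ and, by the invariance of the relevant notions under elementary extension, one may work in a sufficiently saturated model.
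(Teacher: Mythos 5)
Your proposal cannot be matched line-by-line against the paper, because the paper gives no proof of Fact~\ref{fedmundo} at all: it is stated with the remark that it ``can be easily deduced from the proof of \cite[Prop.~2.11]{06E}'', i.e.\ the authors defer entirely to Edmundo's argument. So your argument should be judged as a self-contained substitute, and in that role its skeleton is sound and is the natural one: (1) largeness plus directedness of $\{V_k\}$ forces, on each chart $X_j$ of maximal dimension, the stabilized dimension of the complements $X_j\setminus V_k$ to be $<d$ (your saturation argument producing a generic point outside $V$ is correct; note that $\aleph_1$-saturation applies because the relevant types have parameters in the countable set $A$, no matter how many formulas they contain); (2) transport by the group operations to see that the bad translating elements form a low-dimensional set; (3) the density-in-dimension of $\operatorname{dcl}(A)$-points, which you rightly identify as the crux: it holds precisely because $\operatorname{dcl}(A)\preceq\mathcal{R}$ (definable Skolem functions exist in an o-minimal expansion of a real closed field), your fiber-induction sketch is the correct proof, and you correctly isolate the asymmetry that $Y$ must be $A$-definable while $Z$ may carry arbitrary parameters; (4) compactness over $A$ to extract finiteness, which is legitimate because each $X_i\cap bV_k$ is $A$-definable ($b\in\operatorname{dcl}(A)$ is named by an $A$-formula, and $V_k$, the charts, and the relevant restricted multiplications are all over $A$). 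This is very much in the spirit of what the paper's toolkit elsewhere (e.g.\ Theorem~\ref{Vgrldgr}) does with generics and saturation, and it buys a proof the reader can check without consulting \cite{06E}.

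Two points need tightening, though neither is fatal. First, your density lemma is stated for \emph{definable} $Z$, but you apply it to $Z=S_x=xW^{-1}$, which is only type-definable ($W=G\setminus V$ is a directed intersection of $A$-definable sets). The fix is already contained in your step (1): since the dimensions stabilize, $W\cap X_m\subseteq X_m\setminus V_{k_m}$ for some $k_m$ with $\dim (X_m\setminus V_{k_m})<d$, where $X_m$ is a chart containing $X_{i_0}^{-1}x$; hence $S_x\cap X_{i_0}$ is contained in the honest $(A\cup\{x\})$-definable set $\{b\in X_{i_0}:b^{-1}x\in X_m\setminus V_{k_m}\}$ of dimension $<d$, and the density lemma should be applied to \emph{that} set rather than to $S_x$. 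Second, you use throughout that inversion and translations are definable on charts and carry charts (and definable subsets of $G$) into charts; this is not literally in the paper's minimalist definition of a $\bigvee$-definable group, but it is the standard convention, and the containment of any definable subset of $G$ in a single chart follows from $\aleph_1$-saturation over $A$ exactly as the paper argues inside the proof of Theorem~\ref{Vgrldgr}, so this is a matter of convention to be flagged rather than a gap.
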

As it was pointed out by Y. Peterzil to us, a stronger version of the above fact can be proved. In particular, and using the notation of Fact \ref{fedmundo}, there exist $b_1,\ldots,b_n\in G$, $n=dim(G)$, such that  $G=\bigcup_{i=1}^{n}b_nV$ (it is enough to adapt the proof of \cite[Fact. 4.2]{07P}). However, in this case we do not know if $b_1,\ldots,b_n$ are definable over $A$. Since we are interested in preserving the parameter set we will use the above Fact \ref{fedmundo}.
\begin{teo}\label{Vgrldgr}Let $G\subset R^n$ be a $\bigvee$-definable group over $A$. Let $A\subset C \subset R$. Then\\
\emph{(i)} $G$ with its group topology (from Fact \ref{prop:peter}) is an ld-group over $A$,\\
\emph{(ii)} a subset $X$ of $G$ is a definable subset of $R^n$ over $C$ if and only if it is a definable subspace of $G$ over $C$, and\\
\emph{(iii)} given a definable subspace $X$ of $G$ over $C$ , its closure $\overline{X}$ (with respect to the group topology) is a definable subspace of $G$ over $C$.
\end{teo}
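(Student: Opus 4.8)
The plan is to equip $G$ with an explicit atlas making it an \LD\ over $A$ whose topology is exactly the group topology $\tau$ of Fact~\ref{prop:peter}, and then to read off (i), (ii) and (iii) from that description. The technical device underlying all three parts is the following saturation remark, which I would isolate first: \emph{every definable subset $X$ of $G$ is contained in a single $X_k$}. Writing $X$ with a finite parameter tuple $c$ and using that $G=\bigcup_i X_i$ is a \emph{directed} union (by the defining property $X_i\cup X_j\subset X_k$), the partial type $\{x\in X\}\cup\{x\notin X_i:i\in I\}$ over the countable set $A\cup c$ is finitely satisfiable whenever $X$ lies in no single $X_k$; by $\aleph_1$-saturation it would then be realized by a point of $X\setminus G$, which is absurd. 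Only $|A\cup c|<\aleph_1$ matters here, so the cardinality of the language is irrelevant.

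To build the atlas, I would use Lemma~\ref{lema:entorno} to obtain, for each generic $g$, a definable over $A$, $\tau$-open set $U_g\subset G$ on which $\tau$ coincides with the topology inherited from $R^n$. The union $V:=\bigcup_g U_g$ is large, and writing it as the directed union of the finite unions of the $U_g$ (which are definable over $A$) I may apply Fact~\ref{fedmundo} to get elements $b_j$, definable over $A$, with $G=\bigcup_j b_jV=\bigcup_{j,g}b_jU_g$. Each $b_jU_g$ is $\tau$-open (left translations are $\tau$-homeomorphisms) and definable over $A$ (it is the image of $U_g$ under a left translation, a definable over $A$ map on the relevant definable piece, which exists by the saturation remark applied to $U_g$). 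I then take as charts $\phi_{j,g}\colon b_jU_g\to U_g\subset R^{n}$, $x\mapsto b_j^{-1}x$. The atlas axioms are routine: overlaps $b_jU_g\cap b_{j'}U_{g'}$ are definable over $A$ with images relatively open in $U_g$ (as $\phi_{j,g}$ is a $\tau$-to-$R^n$ homeomorphism on $b_jU_g$), while the transition maps $u\mapsto b_{j'}^{-1}b_j u$ are restrictions of multiplication, hence definable over $A$. The induced topology is $\tau$, and $G$ is Hausdorff because it is a locally Hausdorff topological group (each covering chart is homeomorphic to a subset of $R^n$), and a locally Hausdorff topological group is $T_1$, hence Hausdorff. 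Thus $G$ is an \LD\ over $A$.

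Next I would prove both directions of (ii) and then finish (i). If $X$ is a definable subspace over $C$ it lies in finitely many charts $b_jU_g$ with each trace $\phi_{j,g}(X\cap b_jU_g)$ definable over $C$; translating back by $b_j$ shows each $X\cap b_jU_g$, hence $X$, is a definable over $C$ subset of $R^n$. Conversely, if $X\subset G$ is definable over $C$ in $R^n$, the saturation remark gives $X\subset X_k$, Fact~\ref{fedmundo} covers $X_k$ by finitely many charts, and the traces $\phi_{j,g}(X\cap b_jU_g)$ are definable over $C$; so $X$ is a definable subspace over $C$. With (ii) in hand, multiplication and inversion are ld-maps over $A$ in the sense of Definition~\ref{def:locmap}: the images $(b_jU_g)^{-1}$ and $(b_jU_g)(b_{j'}U_{g'})$ are definable over $A$ subsets of $R^n$ by the $\bigvee$-definability of $\cdot$ (which also makes $^{-1}$ definable over $A$ on definable pieces, its graph being $\{(x,y):xy=e\}$), hence definable subspaces by (ii), and the restricted maps are definable over $A$.

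Finally, for (iii) the main obstacle — since for a general \LD\ the closure of a definable subspace need not be one (Example~\ref{expara}) — is to confine the $\tau$-closure to finitely many charts, and this is where the group structure is essential. Given a definable subspace $X$ over $C$, by (ii) we have $X\subset X_k$ with $X$ definable over $C$. Choosing one basic definable neighbourhood $V_a$ of $e$ from Fact~\ref{prop:peter}, the standard topological-group inclusion $\overline{X}\subset XV_a$ holds; and $XV_a\subset X_kV_a$ lies in some $X_{k'}$ (a product of two definable subsets of $G$ is a definable subset of $G$, hence, by the saturation remark, sits in a single $X_{k'}$). Thus $\overline{X}\subset X_{k'}$, which Fact~\ref{fedmundo} covers by finitely many charts. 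On each such chart $M=b_jU_g$, openness yields $\overline{X}\cap M=\mathrm{cl}_{M}(X\cap M)$, and since $\phi_{j,g}$ is a homeomorphism onto $U_g$ for the $R^n$-topology, $\phi_{j,g}(\overline{X}\cap M)$ is the $R^n$-closure of the definable over $C$ set $\phi_{j,g}(X\cap M)$ intersected with $U_g$ — definable over $C$. Hence $\overline{X}$ meets finitely many charts in definable over $C$ traces, i.e.\ it is a definable subspace over $C$, as required.
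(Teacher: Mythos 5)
Your proposal is correct and follows essentially the same route as the paper: the identical atlas $\{(b_jU_g,\, x\mapsto b_j^{-1}x)\}$ built from Lemma \ref{lema:entorno} and Fact \ref{fedmundo}, the same saturation argument for (ii), and for (iii) the same two ideas --- a topological-group inclusion confining $\overline{X}$ to a single definable piece (you use $\overline{X}\subset XV_a$ where the paper uses $\overline{X}\subset XU_g^{-1}g$), followed by the agreement of the group topology with the $R^n$-topology on the charts to make the closure definable over $C$ (the paper writes the single formula $\overline{X}=\{y\in X_j: g\in cl_{U_g}(gy^{-1}X\cap U_g)\}$ where you compute chart-by-chart traces). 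The extra details you supply --- the isolated saturation remark, Hausdorffness, and the ld-map property of multiplication and inversion --- are exactly the points the paper leaves as ``easy to check''.
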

\begin{proof}(i) Let $\mathcal{G}$ be the collection of all generics points of $G$. For each $g\in \mathcal{G}$, let $U_g$ be the definable over $A$ subset of $G$ of Lemma \ref{lema:entorno}. Consider the subset $V=\bigcup_{g\in \mathcal{G}}U_g$ of $G$, which is large in $G$. By Fact \ref{fedmundo}, there is a collection  $\{b_j\in G:j\in J\}$, with each $b_j$ definable over $A$, such that $G=\bigcup_{j\in J}b_jV$. For each $j\in J$ and $g\in \mathcal{G}$, consider the definable set $V_{j,g}:=b_jU_g$ and the bijection $\psi_{j,g}:V_{j,g}\rightarrow U_g: y \mapsto b_j^{-1}y$. Finally, it is easy to check that $\{(V_{j,g},\psi_{j,g})\}_{j\in J,g\in \mathcal{G}}$ is an atlas of $G$ and therefore $G$ is an ld-group over A.\\
(ii) It is clear that if $X\subset G$ is a definable subspace over $C$ then it is a definable subset of $R^n$ over $C$. So, let $X$ be a definable subset of $R^n$ over $C$ and consider the atlas $\{(V_{j,g},\psi_{j,g})\}_{j\in J,g\in \mathcal{G}}$ of $G$ constructed in the proof of (i). Since $X$ is definable over $C$ we have that $\psi_{j,g}(X\cap V_{j,g})=b_j^{-1}X\cap U_g$ is also definable over $C$ for every $j\in J$ and $g\in \mathcal{G}$. Hence, it is enough to show that $X$ is contained in a finite union of the sets $V_{j,g}$ (which are defined over $A$) and this is clear by saturation since they cover $G$.\\
(iii) Let $X$ be a definable subspace of $G$ over $C$ and write $G=\bigcup_{i\in I}X_i$. By (ii) $X$ is a definable subset of $R^n$ over $C$. We will show that $\overline{X}$ is a definable subset of $R^n$ over $C$ (this is enough also by (ii)). Fix a generic point $g$ of $G$ and let $U_g$ as in Lemma \ref{lema:entorno}. Firstly, let us show that $\overline{X}\subset X_j$ for some $j\in I$. Since $\{X_i\}_{i\in I}$ is a directed family and $X$ and $U_g$ are definable, there is $j\in I$ such that $XU_{g}^{-1}g\subset X_j$. Now, if $y\in \overline{X}$ then $yg^{-1}U_g\cap X\neq \emptyset$ and hence $y\in XU_g^{-1}g\subset X_j$. Finally, $\overline{X}=\{y\in X_j: g\in cl_{_{U_g}}(gy^{-1}X\cap U_g)\}$ is clearly a definable subset of $R^n$ over $C$, where $cl_{_{U_g}}(-)$ denotes the closure in $U_g$ with respect to the inherited topology from the ambient space $R^n$.
\end{proof}
Theorem \ref{Vgrldgr}.(iii) states that in a $\bigvee$-group we have a good relation between both the topological and the definable setting as it happens with LD-spaces (see Fact \ref{fact:claudef}.(1)). However, not every $\bigvee$-definable group is paracompact (or Lindel\"of) as ld-group. To see this, take an $\aleph_1$-saturated elementary extension $\mathcal{R}$ of the o-minimal structure $\left\langle \mathbb{R},<,+,-,\cdot,c\right\rangle_{c\in \mathbb{R}}$. Firstly, consider the collection $\mathcal{F}$ of finite subsets of $\mathbb{R}$. Then $(G,+)$, where $G=\bigcup_{F\in \mathcal{F}}F\subset R$ and $+$ is the usual addition, is a $\bigvee$-definable group over $\emptyset$ which is not Lindel\"of as ld-group. Note that the group topology of $G$ as $\bigvee$-definable group is the discrete one. Secondly, consider $(G,+)$, where $G=\bigcup_{r\in \mathbb{R}}(-r,r)\subset R$ and $+$ is the usual addition. The group $(G,+)$ is a $\bigvee$-definable group which is not Lindel\"of as ld-group. Since it is connected, $(G,+)$ is not paracompact (see Fact \ref{fact:claudef}.(2)).

In \cite{06E}, M. Edmundo considers $\bigvee$-definable groups $G=\bigcup_{i\in I}X_i$ over $A$ with the restriction $|I|<\aleph_1$ (which already implies the restriction $|A|<\aleph_1$), he calls them \textit{``locally definable''} groups. This restriction on the cardinality of $I$ allows Edmundo to prove results using techniques which are not available in the general setting of $\bigvee$-definable groups. As he notes the main examples of $\bigvee$-definable groups are of this form: the subgroup of a definable group generated by a definable subset and the coverings of definable groups. The restriction on the cardinality of $|I|$ of the ``locally definable'' groups has also the following consequences on them as \LD s.
\begin{teo}\label{teo:ldgparacom}\emph{(i)} Every ``locally definable'' group over $A$ with its group topology is a Lindel\"of LD-group over $A$.\\
\emph{(ii)} Moreover, every  Lindel\"of LD-group over $A$ is ld-isomorphic to a ``locally definable'' group over $A$ (considered as an LD-group by (i)).
\end{teo}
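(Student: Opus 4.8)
I would prove the two parts by rather different means, reading off (i) from the structure theory already in place and reserving the real construction for (ii). For (i), recall that a ``locally definable'' group $G=\bigcup_{i\in I}X_i$ over $A$ with $|I|<\aleph_1$ is in particular a $\bigvee$-definable group over $A$, so by Theorem \ref{Vgrldgr}(i) it is an ld-group over $A$ and hence regular (Definition \ref{dldgr}); what remains is Lindel\"ofness and paracompactness. Since $|I|<\aleph_1$ the family $\{X_i\}_{i\in I}$ is countable and, by Theorem \ref{Vgrldgr}(ii), each $X_i$ is a definable subspace of $G$; as the charts of the ld-group atlas are open definable subspaces covering $G$, $\aleph_1$-saturation lets me cover each $X_i$ by finitely many of them. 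The resulting countable family of open definable subspaces covers $G$, and it is an admissible covering because any single chart, being a definable set, is covered by finitely many members of the family, again by saturation; thus $G$ is Lindel\"of over $A$. Finally Theorem \ref{Vgrldgr}(iii) guarantees that closures of definable subspaces are definable subspaces, so Fact \ref{fact:claudef}(3) promotes Lindel\"ofness to paracompactness, and, being regular and paracompact, $G$ is a Lindel\"of LD-group over $A$.

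For (ii), let $G$ be a Lindel\"of LD-group over $A$. I would first note that $G$ is finite-dimensional: left translations are ld-homeomorphisms (each is a restriction of the ld-map $\cdot$ to the charts), so by homogeneity every point has a neighbourhood ld-homeomorphic to a fixed chart around the identity and $\dim G=\dim_e G<\infty$; set $d=\dim G$. By the Triangulation Theorem (Fact \ref{fact:triangulation}) together with Remark \ref{rmk:paratriang} there is an ld-homeomorphism $\psi\colon|K|\to G$ over $A$, where $K$ is a locally finite generalized simplicial complex of dimension $d$ whose vertices are tuples of real algebraic numbers; since $G$ is Lindel\"of, $K$ has only countably many simplices. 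The key step is to realise $|K|$ inside a single affine space: choosing the countably many vertices of $K$ one at a time in general position in $R^{2d+1}$ — at each stage only the finitely many affine conditions coming from the star of that vertex (local finiteness!) have to be avoided — yields a realisation $\beta\colon|K|\to H_0\subseteq R^{2d+1}$ with $H_0$ a countable union of honest definable simplices. Transporting the group law of $G$ along $\beta\circ\psi^{-1}$ makes $H_0$ a group, and it is $\bigvee$-definable over $A$ with countable index set: its charts $X_i$ are the stars (finite unions of simplices), and on each product $X_i\times X_j$ the multiplication is a composite of the definable maps $\beta^{\pm1}$, $\psi^{\pm1}$ and the ld-multiplication of $G$, all restricted to the relevant definable subspaces, hence definable. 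Thus $H_0$ is an Edmundo ``locally definable'' group over $A$ and $\beta\circ\psi^{-1}\colon G\to H_0$ is the required ld-isomorphism; by (i) it is an LD-group.

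The heart of the matter — and the step I expect to be genuinely delicate — is the fixed-dimensional realisation in (ii): one must place the countably many simplices of $K$ into a single $R^{2d+1}$ whose dimension does not grow, in general position and over $A$, and then verify that the transported multiplication is piecewise definable. This is exactly where finite-dimensionality of $G$ is indispensable, and it is the o-minimal analogue of the classical embedding of a countable locally finite $d$-complex into $(2d+1)$-space. An essentially equivalent route avoiding triangulation is to embed the countable increasing affine exhaustion $M_0\subseteq M_1\subseteq\cdots$ of $G$ (available from Lindel\"ofness and Remark \ref{obs:regularity}) into a fixed $R^N$ by extending the embeddings one step at a time; but the general-position bookkeeping, together with the definability of the transported multiplication, is the same obstruction in either presentation.
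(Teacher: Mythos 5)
Your part (i) is correct and follows the paper's route with one small variation. Where the paper reduces to a countable language (using Proposition \ref{prop:epanext}) so that the atlas $\{(V_{j,g},\psi_{j,g})\}$ constructed in Theorem \ref{Vgrldgr}(i) becomes countable, you instead use the countability of $\{X_i\}_{i\in I}$ directly: each $X_i$ is a definable subspace by Theorem \ref{Vgrldgr}(ii), hence lies in finitely many charts (indeed this is part of Definition \ref{deff:definable}), and admissibility of the resulting countable family of charts follows by $\aleph_1$-saturation, the relevant types being over countable parameter sets. Both arguments are sound, and you then obtain paracompactness exactly as the paper does, from Theorem \ref{Vgrldgr}(iii) and Fact \ref{fact:claudef}(3).

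In part (ii) you follow the paper's strategy (triangulate over $A$, note that $K$ is countable and finite-dimensional, realize $|K|$ in $R^{2d+1}$, transport the group law, and extract the Edmundo structure from finite unions of simplices), but your substitute for the embedding step has a genuine gap. The paper invokes \cite[Prop. II.3.3]{85DK}, which yields not merely a realization of $K$ inside $R^{2n+1}$ but a realization such that \emph{the topology $|K|$ inherits from $R^{2n+1}$ coincides with its topology as an LD-space}. Your general-position argument only ensures affine independence and disjointness of simplices; it does not prevent the countably many image simplices from accumulating in $R^{2d+1}$, in which case the inherited topology is strictly coarser than the ld-topology --- compare the paper's own example $M_0=\{0\}$, $M_i=\{1/i\}$ in Section \ref{sexRn}. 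This coincidence of topologies is not cosmetic: the theorem asks for an ld-isomorphism with the ``locally definable'' group \emph{considered as an LD-group by (i)}, that is, carrying the group topology of Fact \ref{prop:peter}, and that topology is characterized by agreeing, near generic points, with the topology inherited from the ambient $R^{2d+1}$. The paper's final step uses precisely the coincidence of topologies, together with continuity of the transported multiplication and the uniqueness clause of Fact \ref{prop:peter}, to conclude that the LD-group structure on $|K|$ coming from part (i) \emph{is} the transported structure $H$. With a ``bad'' realization these two ld-structures on $H_0$ can genuinely differ, and then your map $\beta\circ\psi^{-1}$ is an ld-isomorphism onto the transported structure but not onto $H_0$ as an LD-group via (i); your closing assertion ``by (i) it is an LD-group'' sidesteps this identification rather than performing it. The gap is repairable along your lines: one must additionally arrange the realization so that the images of the simplices form a locally finite (closed) family in $R^{2d+1}$, e.g.\ by pushing successive finite subcomplexes off to infinity while keeping the vertices real algebraic and in general position --- which is exactly what the cited Delfs--Knebusch proposition delivers. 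A lesser point: the Peterzil--Starchenko/Edmundo definition requires a \emph{directed} family, so the charts should be the realizations of finite subcomplexes, as in the paper, rather than literally the stars; your parenthetical ``finite unions of simplices'' is the correct reading.
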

\begin{proof}(i) Let $G$ be a ``locally definable'' group over $A$. By Theorem \ref{Vgrldgr}.(i), $G$ is an ld-group over $A$. We first show that $G$ is  Lindel\"of. Recall the notation of Theorem \ref{Vgrldgr}.(i). Write $G=\bigcup_{i\in I}X_i$, with $|I|<\aleph_1$. Since $I$ is countable, to prove that $G$ is Lindel\"of we can assume that the language is countable (recall that Lindel\"of property is invariant under o-minimal expansions by Proposition \ref{prop:epanext}). Now, since for each generic $g\in G$ the definable subset $U_g$ of Lemma \ref{lema:entorno} is definable over $A$, the collection $\{U_g:g\in G \textrm{ generic}\}$ is countable. Hence, the atlas $\{(V_{j,g},\psi_{j,g})\}_{j\in J,g\in \mathcal{G}}$ of the proof of Theorem \ref{Vgrldgr}.(i) is also countable and so $G$ is Lindel\"of. Having proved the latter, the paracompacity follows from  Theorem \ref{Vgrldgr}.(iii) and Fact \ref{fact:claudef}.\\
(ii) Let $G$ be a Lindel\"of LD-group over $A$. Since $G$ is regular and paracompact, by Fact \ref{fact:triangulation} and Remark \ref{rmk:paratriang} there is a ld-triangulation $\psi:|K|\rightarrow G$ over $A$. Now, since $G$ is a group, the dimension of $K$ is finite. Furthermore, since $G$ is Lindel\"of, the admissible covering $\{St_{|K|}(\sigma):\sigma\in K\}$ of $|K|$ has a countable subcovering of $|K|$. From this fact we deduce that $K$ is countable. Then, by \cite[Prop. II.3.3]{85DK}, we can assume that the realization $|K|$ lie in $R^{2n+1}$, $n=dim(K)$, and that the topology it inherits from $R^{2n+1}$ coincide with its topology as \ld. Now, define in $|K|$ a group operation via the ld-isomorphism $\psi$ and the group operation of $G$. With this group operation, $|K|$ is an LD-group which we will denote by $H$. Of course, $G$ is ld-isomorphic to $H$ via $\psi$. On the other hand, we can consider $|K|$ as a ``locally definable'' group. For, let $\mathcal{F}$ the collection of all finite simplicial subcomplexes of $K$. Clearly, $|K|=\bigcup_{L\in \mathcal{F}}|L|$ with the group operation obtained via $\phi$ is a ``locally definable'' group over $A$. Indeed, since the group operation is an ld-map, its restriction to $|L_1|\times |L_2|$ is a definable map into $R^{2n+1}$ for all $L_1,L_2\in \mathcal{F}$. Finally, since the group operation is already continuous and the topology of $|K|$ as \LD \ coincide with the one inherited form $R^{2n+1}$, the ``locally definable'' group $|K|$ with the ld-group structure obtained in part (i) is exactly $H$.
\end{proof}
\begin{cor}Let $G$ be a ``locally definable'' group over $A$. Then, there is a ld-triangulation $\psi:|K|\rightarrow G$ of $G$ over $A$ with $|K|\subset R^{2n+1}$, $n=dim(G)$, and such that the topology of $|K|$ as \ld \ coincide with the one inherited from $R^{2n+1}$. Moreover, $|K|$ with the group operation inherited from $G$ via $\psi$ is also a ``locally definable'' group over $A$ whose group topology equals the one inherited from $R^{2n+1}$.
\end{cor}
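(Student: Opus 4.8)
The plan is to extract the statement from the proof of Theorem~\ref{teo:ldgparacom}, applied directly to $G$. Since $G$ is a ``locally definable'' group over $A$, part~(i) of that theorem says that $G$, equipped with its group topology, is already a Lindel\"of LD-group over $A$; in particular it is regular and paracompact, so every construction developed for \ld s is available for it.

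First I would apply the Triangulation Theorem (Fact~\ref{fact:triangulation}) together with Remark~\ref{rmk:paratriang} to produce an ld-triangulation $\psi:|K|\rightarrow G$ defined over $A$. Two finiteness observations then constrain $K$. Because $G$ carries a group structure its dimension is finite, so $\dim(K)=n=\dim(G)$; and because $G$ is Lindel\"of, the admissible covering $\{St_{|K|}(\sigma):\sigma\in K\}$ of $|K|$ has a countable subcovering, which forces $K$ to be countable. With $K$ countable and of finite dimension, \cite[Prop. II.3.3]{85DK} lets me arrange that the realization $|K|$ sits inside $R^{2n+1}$ and that the topology it inherits from $R^{2n+1}$ coincides with its topology as an \ld. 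This gives the first assertion.

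Next I would transport the group operation of $G$ to $|K|$ through the ld-isomorphism $\psi$, so that $|K|$ becomes an LD-group $H$ with $\psi:H\rightarrow G$ an ld-isomorphism. To see that $H$ is a ``locally definable'' group over $A$, let $\mathcal{F}$ be the family of finite simplicial subcomplexes of $K$ and write the directed union $|K|=\bigcup_{L\in\mathcal{F}}|L|$; since the transported operation is an ld-map, its restriction to each $|L_1|\times|L_2|$ is a definable map into $R^{2n+1}$, which is precisely the data in the definition of a ``locally definable'' group. All the parameters involved lie in $A$, so $H$ is defined over $A$.

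The one step I would treat as genuine content, rather than bookkeeping, is the final clause: that the group topology of the ``locally definable'' group $|K|$ agrees with the topology inherited from $R^{2n+1}$. Here the transported operation is already continuous for the inherited topology, and by the previous paragraph this inherited topology is the topology of $|K|$ as an \ld; invoking the uniqueness of the group topology in Fact~\ref{prop:peter}, the ld-group structure furnished by Theorem~\ref{teo:ldgparacom}.(i) must recover precisely $H$, so the two topologies coincide. The remainder of the argument is a direct reassembly of the proof of Theorem~\ref{teo:ldgparacom}, so I expect no essential difficulty beyond this identification of topologies.
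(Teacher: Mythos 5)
Your proposal is correct and follows essentially the same route as the paper: the corollary is obtained exactly as in the proof of Theorem \ref{teo:ldgparacom}, applying part (i) to make $G$ a Lindel\"of LD-group over $A$ and then repeating the construction in part (ii) (triangulation over $A$ via Fact \ref{fact:triangulation} and Remark \ref{rmk:paratriang}, countability and finite dimension of $K$, the embedding $|K|\subset R^{2n+1}$ from \cite[Prop. II.3.3]{85DK}, transport of the group operation, and the directed union over finite subcomplexes). Your explicit appeal to the uniqueness clause of Fact \ref{prop:peter} for the identification of the group topology with the topology inherited from $R^{2n+1}$ is precisely what the paper's terser final sentence relies on implicitly.
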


Let us point out that there are important examples of $\bigvee$-groups which are not Lindel\"of \ld s (and hence not ``locally definable'' groups). The group of definable homomorphism between abelian groups were used in \cite{00PS} as a tool to study interpretability problems. In particular, given to abelian definable groups $A$ and $B$ over $C$, $C\subset R$, it is proved there that the group of definable homomorphisms $\mathcal{H}(A,B)$ from $A$ to $B$ is a $\bigvee$-definable group over $C$ (see \cite[Prop. 2.20]{00PS}). Note that $\mathcal{H}(A,B)$ might not be a ``locally definable'' group (see the Examples at the end of Section 3 in \cite{00PS}). Nevertheless, $\mathcal{H}(A,B)$ is an LD-group. Indeed, we have already seen in Theorem \ref{Vgrldgr}.(i) that it is an ld-group (and hence regular). To prove paracompactness, consider its connected component $\mathcal{H}(A,B)^{0}$, which is a definable group by \cite[Thm. 3.6]{00PS}. Then, by Theorem \ref{Vgrldgr}.(ii), $\mathcal{H}(A,B)^{0}$ is a definable subspace of $\mathcal{H}(A,B)$. Hence, $\{g\mathcal{H}(A,B)^{0}:g\in \mathcal{H}(A,B) \}$ is a locally finite covering of $\mathcal{H}(A,B)$ by open definable subspaces and therefore $\mathcal{H}(A,B)$ is paracompact. As we will see in the next section, the notion of connectedness used in  \cite{00PS} for $\bigvee$-groups differs from the one used here. However, in this particular case, since $\mathcal{H}(A,B)^{0}$  is definable, both notions coincide.
\section{Connectedness}\label{sec:connectedness}
Recall that an \LD \ $M$ is connected if there is no admissible nonempty proper clopen subspace $U$ of $M$. We can also extend the natural concept of ``path connected'' for definable spaces to the locally definable ones. Specifically, we say that an admissible subspace $X$ of an \LD \ $M$ is \textit{\textbf{path connected}} if for every $x_0,x_1\in X$ there is a ld-path $\alpha:[0,1]\rightarrow X$ such that $\alpha(0)=x_0$ and $\alpha(1)=x_1$. Naturally, the (path) \textit{\textbf{connected components}} of an ld-space are the maximal (path) connected subsets.
\begin{fact}\emph{\cite[Prop. I.3.18]{85DK}}\label{fact:conexoporarcos} Every path connected component of an \LD \ is a clopen admissible subspace.
\end{fact}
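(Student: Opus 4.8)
The plan is to reduce the statement to the behaviour of definable sets inside each chart, exploiting two standard o-minimal facts: every definable set $Z\subseteq R^m$ has finitely many definably connected components, each of which is definable and relatively clopen in $Z$; and every definably connected definable set is definably path connected. Fix an atlas $(M_i,\phi_i)_{i\in I}$ of $M$ with $\phi_i:M_i\to Z_i$, and let $C$ be the path connected component of a point $x_0\in M$. For each $i$ write $Z_i=Z_i^1\sqcup\cdots\sqcup Z_i^{k_i}$ as its (finitely many) definably connected components and set $M_i^l=\phi_i^{-1}(Z_i^l)$; these are the definable path components of the chart $M_i$, and each is open in $M_i$, hence open in $M$.

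First I would establish the key local claim: for every $i$ and every $l$, either $M_i^l\subseteq C$ or $M_i^l\cap C=\emptyset$. Indeed, suppose $M_i^l\cap C\neq\emptyset$, say $y\in M_i^l\cap C$. Given any $z\in M_i^l$, the component $Z_i^l$ is definably connected and therefore definably path connected, so there is a definable path in $Z_i^l$ from $\phi_i(y)$ to $\phi_i(z)$; transporting it by $\phi_i^{-1}$ yields a path $\alpha:[0,1]\to M_i^l$. Its image is a definable subset of $Z_i^l$ carried into $M$, hence a definable subspace of $M$, so $\alpha$ is a genuine ld-path in $M$. Thus $z$ is joined to $y$, and so to $x_0$, by an ld-path, giving $z\in C$; hence $M_i^l\subseteq C$. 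Consequently
\[
C\cap M_i=\bigcup\{\,M_i^l : M_i^l\subseteq C\,\},
\]
a finite union, so $\phi_i(C\cap M_i)=\bigcup\{\,Z_i^l : M_i^l\subseteq C\,\}$ is definable. As $i$ was arbitrary, $C$ is an admissible subspace of $M$.

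It then remains to see that $C$ is clopen. For openness, take $x\in C$; picking a chart $M_i\ni x$ and the component $M_i^l$ containing $x$, the local claim gives $M_i^l\subseteq C$, and $M_i^l$ is an open neighbourhood of $x$ in $M$, so $C$ is open. For closedness, note that the same argument applies verbatim to every path connected component, so each of them is open; since $M\setminus C$ is exactly the union of the path connected components different from $C$, it is a union of open sets and hence open, whence $C$ is closed.

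The routine equivalence-relation facts (constant paths, reversal, and concatenation being ld-paths, the last using that a finite union of definable subspaces is a definable subspace) guarantee that the path components partition $M$ and are well defined. The only genuinely delicate point, and the one I would be most careful about, is the passage from chart-level definable path connectedness to global ld-path connectedness: one must check that a definable path inside a single chart really is an ld-path of $M$ (which holds because its image lands in one definable subspace) and that the finiteness of the definably connected components of each $Z_i$ is precisely what converts the set-theoretic description of $C\cap M_i$ into a definable one. Everything else is bookkeeping with the atlas, and notably neither the Hausdorff condition nor regularity or paracompactness is needed.
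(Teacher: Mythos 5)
Your proof is correct. The paper gives no argument of its own for this Fact --- it simply cites \cite[Prop.\ I.3.18]{85DK} and relies on the blanket remark that the Delfs--Knebusch proofs transfer because they rest on properties of semialgebraic sets shared by definable sets; your chart-by-chart argument (finitely many definably connected components of each $Z_i$, definable connectedness implying definable path connectedness, and the observation that a definable path inside one chart is an ld-path) is exactly that transferred argument, so it matches the intended proof rather than departing from it.
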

From the above fact we deduce that the connected and path-connected components of an \LD \ are admissible subspaces and coincide. In particular, every connected ld-space is path connected (the converse is trivial).

Note that given two \LD s $M$ and $N$, with $M$ connected and $N$ discrete, every ld-map $f:M\rightarrow N$ is constant. For, since $N$ is discrete, $\{y\}$ is a clopen definable subspace of $N$ for all $y\in N$. Therefore the admissible subspace $f^{-1}(y)$ of $M$ is clopen for all $y\in N$. Since $M$ is connected, $M=f^{-1}(y_0)$ for some $y_0\in N$.

Since $\bigvee$-definable groups were first considered several non equivalent notions of connectedness have been used. As we will see here some of them are not really adequate and lead to pathological examples. Fix a $\bigvee$-definable group $G=\bigcup_{i\in I}X_i\subset R^n$ over $A$, $A\subset R$, $|A|<\aleph_1$, in an $\aleph_1$-saturated o-minimal expansion $\mathcal{R}$ of a real closed field $R$. Here, we say that $G$ is connected if it is so as ld-group (see Theorem \ref{Vgrldgr}). In \cite{00PS}, $G$ is said to be $\mathcal{M}$-connected (\textit{PS-connected}, for us) if there is no definable set $U$ in $R^n$ such that $U\cap G$ is a nonempty proper clopen subset with the group topology of $G$. In \cite{06E}, $G$ is said to be connected (\textit{E-connected}, for us) if there is no definable set $U\subset G$ such that $U$ is a nonempty proper clopen subset with the group topology of $G$. Finally, in \cite{08OP}, $G$ is said to be connected (\textit{OP-connected}, for us) if all the $X_i$ can be chosen to be definably connected with respect to the definable subspace structure it inherits from $G$ as ld-group. Notice that in \cite{08OP} the situation is simpler because $G$ is a subgroup of a definable group and hence embedded in some $R^n$, so each $X_i$ is connected with respect to the ambient $R^n$ (see Section \ref{sexamples}.1).

For $\bigvee$-definable groups the relation of the above notions is as follows:
\begin{center} 
OP-connected $\Leftrightarrow$ Connected $\Rightarrow$ PS-connected $\Rightarrow$ E-connected.
\end{center}
The second and third implications are clear by definition. Furthermore, the following examples show that these implications are strict.
\begin{ejemcur}Let $R$ be a non archimedean real closed field. Consider the definable set $B=\{(t,-t)\in R^2:t\in [0,1] \}\ \cup \{(t,t-2)\in R^2:t\in [1,2] \}$. For each $n\in \mathbb{N}$, consider the definable set $X_n=(\bigcup_{i=-n}^n (2i,0)+B) \cup (\bigcup_{i=-n}^n (2i,-\frac{1}{2})+B)\subset R^2$. Define a group operation on $G=\bigcup_{n\in \mathbb{N}} X_n$ via the natural bijection of $G$ with Fin($R$)$\times \mathbb{Z}/2\mathbb{Z}$, where Fin($R$)$=\{x\in R: |x|<n \textrm{ for some } n\in \mathbb{N} \}$. Then, $G$ with this group operation is a $\bigvee$-definable group.
\end{ejemcur}
Note that the topology of $G$ inherited from $R^2$ coincide with its group topology. $G$ is not connected as an \LD \ because it has two connected components. However, $G$ is PS-connected because any definable subset of $R^2$ which contains one of these connected components must have a nonempty intersection with the other component.
\begin{ejemcur}\cite{08BE} Let $R$ be a non archimedean real closed field and consider the definable sets $X_n=(-n,-\frac{1}{n})\cup (\frac{1}{n},n)$ for $n\in \mathbb{N}$, $n>1$. Then, $G=\bigcup_{n>1}X_n$ is a $\bigvee$-definable group with the multiplicative operation of $R$.
\end{ejemcur}

Here, again, the topology $G$ inherits from $R^2$ coincide with its group topology. The $\bigvee$-definable group $G$ is not PS-connected since it is the disjoint union of the clopen subsets $\{x\in R:x>0\}\cap G$ and $\{x\in R:x<0\}\cap G$. But neither of these subsets is definable and therefore $G$ is E-connected.

Note that in both examples we can define in an obvious way an ld-map $f:G\rightarrow \mathbb{Z}/2\mathbb{Z}$ which is not constant and therefore the remark we made at the beginning of this section is not true if we replace connectedness by PS-connectedness or E-connectedness.

Even though there are pathological examples, the results in \cite{00PS} are correct for PS-connectedness. For the results in \cite{06E}, one should substitute E-connectedness by connectedness (see \cite{08BE}).

We now prove the equivalence between both OP-connectedness and connectedness.
\begin{prop}\label{prop:OPconnected}Let $G$ be a $\bigvee$-definable group over $A$. Then, $G$ is OP-connected if and only if $G$ is connected.
\end{prop}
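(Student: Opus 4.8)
The plan is to prove the equivalence by establishing both directions, where the genuinely substantive direction is that connectedness implies OP-connectedness. The definitions to reconcile are: $G$ is OP-connected if the generating sets $X_i$ can be chosen definably connected (in the definable subspace structure inherited from $G$), while $G$ is connected if there is no nonempty proper clopen admissible subspace. First I would dispose of the easy direction. Suppose $G$ is OP-connected, so we may write $G=\bigcup_{i\in I}X_i$ with each $X_i$ definably connected, and we may arrange the family to be directed (for any $i,j$ there is $k$ with $X_i\cup X_j\subset X_k$). If $U$ is a nonempty proper clopen admissible subspace, pick points $x\in U$ and $y\in G\setminus U$; both lie in some common $X_k$, and then $U\cap X_k$ is a nonempty proper clopen definable subset of the definably connected set $X_k$, a contradiction. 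Here I would use that an admissible subspace intersected with a definable subspace is a definable subspace (Definition \ref{deff:definable}), and that being clopen in $G$ restricts to being clopen in $X_k$.

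For the main direction, assume $G$ is connected; I must produce a choice of $X_i$ that are definably connected. The natural idea is to replace each $X_i$ by the (finitely many) definably connected components of $X_i$ as a definable subspace of $G$, and then show these pieces still generate $G$ and that finitely many of them assemble correctly — but this only yields definably connected pieces, not that $G$ is generated by them in a way compatible with the $\bigvee$-definable structure. The cleaner route is to use the triangulation machinery: by Theorem \ref{Vgrldgr} $G$ is an ld-group, and I would like to exploit the path-connectedness characterization. By Fact \ref{fact:conexoporarcos} and the discussion after it, a connected ld-space is path connected, and its connected and path-connected components coincide and are admissible. So fix a generic $g$ and its definable neighbourhood $U_g$ from Lemma \ref{lema:entorno}; since $G$ is connected, any two points are joined by an ld-path, whose image is a definably connected definable subspace of $G$. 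The strategy is to cover $G$ by definably connected definable subsets obtained as images of such paths (or finite unions/translates of $U_g$ linked by paths), and to verify these can be taken as a new generating family $\{X_i'\}$ exhibiting OP-connectedness.

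The hard part will be ensuring that the definably connected pieces I extract genuinely form a $\bigvee$-generating family \emph{over the same parameter set $A$} and satisfy the directedness and multiplicativity conditions in the definition of a $\bigvee$-definable group, rather than merely covering $G$ topologically. Concretely, given any $X_i$ in the original presentation, I must find a definably connected definable subspace $X_i'$ with $X_i\subset X_i'$; the candidate is to connect $X_i$ to a fixed definably connected ``core'' (say a translate structure around the identity) using an ld-path and take the union of $X_i$, the path image, and the core, which is definably connected as a finite union of definably connected sets with nonempty pairwise intersections. The subtlety is controlling parameters: the path joining two $A$-definable points may a priori require extra parameters, so I would need the path to be taken $A$-definably, which should follow because the relevant triangulation and the neighbourhoods $U_g$ of Lemma \ref{lema:entorno} are defined over $A$. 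Once each $X_i$ is enlarged to a definably connected $A$-definable $X_i'$ still satisfying $X_i'\cup X_j'\subset X_k'$ and with multiplication restricting definably, the family $\{X_i'\}$ witnesses OP-connectedness, completing the equivalence.
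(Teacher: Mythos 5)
Your easy direction (OP-connected implies connected) is correct, and it is essentially the paper's argument: directedness of the family puts a point of $U$ and a point of its complement inside a common definably connected $X_k$, and $U\cap X_k$ gives a contradiction.

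The main direction, however, has genuine gaps. First, the tool you lean on is not available: the Triangulation Theorem (Fact \ref{fact:triangulation}) requires regularity \emph{and} paracompactness, and a general $\bigvee$-definable group need not be paracompact --- the paper exhibits such examples in Section \ref{ssexVgr} (e.g. $G=\bigcup_{r\in\mathbb{R}}(-r,r)$). Only Fact \ref{fact:conexoporarcos}, valid for arbitrary ld-spaces, may be invoked; that part of your plan is fine. Second, and more seriously, your mechanism for controlling parameters does not work: you hope to choose the connecting paths $A$-definably ``because the relevant triangulation and the neighbourhoods $U_g$ are defined over $A$'', but this justification collapses with the triangulation, and no such choice is needed or in general possible. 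The paper's key maneuver, which your sketch is missing, avoids $A$-definable paths altogether: take $\mathcal{C}$ to be the family of \emph{all} connected $A$-definable subspaces of $G$ containing the identity $e$. This family is automatically directed, since any finite union of its members is again connected (they share the point $e$), so one only has to show it covers $G$. Given $x\in G$, a (not necessarily $A$-definable) ld-path from $x$ to $e$ has definable image, hence lies in a finite union $Y$ of charts, which is $A$-definable because $G$ is an ld-group over $A$; the definably connected component of $Y$ containing the path is then $A$-definable, connected, and contains both $x$ and $e$. Thus $A$-definability comes for free from the charts, and the path itself never needs to be defined over $A$. Two further local defects in your construction: the union of $X_i$, a path image and a core is not ``a finite union of definably connected sets with nonempty pairwise intersections'' unless $X_i$ itself is definably connected, which is exactly what cannot be assumed; and the directedness $X_i'\cup X_j'\subset X_k'$ of your enlarged family is asserted but never arranged (the path pieces glued into $X_i'$ need not lie in $X_k'$), whereas with the paper's family $\mathcal{C}$ this issue vanishes.
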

\begin{proof}Firstly, recall that by Theorem \ref{Vgrldgr} a subset of $G$ is a definable subspace if and only if it is a definable subset of $R^n$. Let $G$ be an OP-connected $\bigvee$-definable group, i.e, such that $G=\bigcup_{i\in I}X_i$ with $X_i$ definably connected for all $i\in I$. Consider a nonempty admissible clopen subspace $U$ of $G$. Since $U$ is not empty and each $X_i$ is definably connected, there is $i_0\in I$ such that $X_{i_0}\subset U$. Now, for every $i\in I$ there is $j\in I$ with $X_{i_0}\cup X_i\subset X_j$. Since $X_j$ is definably connected and $\emptyset \neq X_{i_0}\subset X_j\cap U$ we have that $X_j\subset U$ and, in particular, $X_i\subset U$. So we have proved that for every $i\in I$, $X_i\subset U$. Hence $U=G$, as required.

Now, let $G$ be a connected $\bigvee$-definable group over $A$. Let $\mathcal{C}$ be the collection of all connected definable subspaces over $A$ of $G$ which are connected and contain the unit element of $G$. It is enough to show that $G=\bigcup_{X\in \mathcal{C}}X$. Note that we just consider the connected definable subspaces of $G$ which are definable over $A$ because we need to preserve the parameter set. So let $x\in G$. By Fact \ref{fact:conexoporarcos}, $G$ is also path connected and hence there is an ld-curve $\alpha:I\rightarrow G$ such that $\alpha(0)=x$ and $\alpha(1)=e$. Since $\alpha(I)$ is definable and $G$ is an ld-group over $A$, a finite union of charts (which are definable over $A$) contains $\alpha(I)$. Hence $\alpha(I)$ is contained in a definable over $A$ subset $X$ of $G$. Taking the adequate connected component, we can assume that $X$ is connected. Hence $x\in X\in \mathcal{C}$.
\end{proof}
\begin{cor}A $\bigvee$-definable group is OP-connected if and only if is path-connected. 
\end{cor}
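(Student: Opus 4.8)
The plan is to deduce this corollary directly from Proposition~\ref{prop:OPconnected} combined with the general facts about path-connectedness in ld-spaces established earlier, namely Fact~\ref{fact:conexoporarcos} and the remark following it. Proposition~\ref{prop:OPconnected} already gives the equivalence ``OP-connected $\Leftrightarrow$ connected'', so all that remains is to show ``connected $\Leftrightarrow$ path-connected'' for a $\bigvee$-definable group $G$. But this is not special to groups: by Theorem~\ref{Vgrldgr}.(i), $G$ is an \LD\ (indeed an ld-group), and the discussion after Fact~\ref{fact:conexoporarcos} records that for \emph{any} \LD\ the connected and path-connected components coincide, so an \LD\ is connected if and only if it is path-connected.

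Concretely, I would argue as follows. Let $G$ be a $\bigvee$-definable group. By Theorem~\ref{Vgrldgr}.(i), $G$ carries the structure of an ld-group and in particular of an \LD. Applying the equivalence between connectedness and path-connectedness valid for every \LD\ (Fact~\ref{fact:conexoporarcos} and the remark immediately after it), we get that $G$ is connected if and only if $G$ is path-connected. Chaining this with Proposition~\ref{prop:OPconnected}, which asserts that $G$ is OP-connected if and only if $G$ is connected, yields that $G$ is OP-connected if and only if $G$ is path-connected, as claimed.

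I do not anticipate any serious obstacle here, since the corollary is essentially a transitivity statement linking three already-proven equivalences; the only thing to be careful about is that the passage ``connected $\Leftrightarrow$ path-connected'' is invoked at the level of the ld-space $G$ rather than re-proved, which is legitimate because Theorem~\ref{Vgrldgr}.(i) guarantees $G$ is a genuine \LD\ to which Fact~\ref{fact:conexoporarcos} applies. If one wanted the proof to be fully self-contained one could instead unwind the definitions: a path-connected $G$ has a single path-connected component, which by Fact~\ref{fact:conexoporarcos} is a clopen admissible subspace equal to all of $G$, forcing $G$ to be connected, and conversely a connected $G$ is path-connected by the same fact. Either way the corollary follows in one short step from the proposition.
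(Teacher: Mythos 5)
Your proof is correct and follows exactly the paper's route: the paper's own proof is simply ``By Fact \ref{fact:conexoporarcos} and Proposition \ref{prop:OPconnected}'', i.e.\ the same chaining of OP-connected $\Leftrightarrow$ connected with connected $\Leftrightarrow$ path-connected for \LD s. Your additional care in noting that Theorem \ref{Vgrldgr}.(i) is what licenses applying Fact \ref{fact:conexoporarcos} to $G$ is exactly the implicit content of the paper's convention that connectedness of $G$ means connectedness as an ld-group.
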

\begin{proof}By Fact \ref{fact:conexoporarcos} and Proposition \ref{prop:OPconnected}. 
\end{proof}
\section{Homotopy theory in \ld s}\label{shomot}
Once we have defined the category of locally definable spaces, in the following section we will develop a homotopy theory for \ld s, that is, regular and paracompact locally definable spaces. This section is divided in Subsections \ref{shomotsetLD}, \ref{shomotgrld} and \ref{shurewiczld}, which are the locally definable analogues of Sections 3,4 and 5 of \cite{07pBO}, respectively.
\subsection{Homotopy sets of locally definable spaces}\label{shomotsetLD}
The homotopy sets in the locally definable category are defined as in the definable one just substituting the definable maps by the locally definable ones (see Section 3 in \cite{07pBO}). Specifically, let $(M,A)$ and $(N,B)$ be two pairs of \ld s, i.e., $M$ and $N$ are \ld s and $A$ and $B$ are admissible subspaces of $M$ and $N$ respectively. Let $C$ be a closed admissible subspace of $M$ and let $h:C\rightarrow N$ be an ld-map such that $h(A\cap
C )\subset B$. We say that two ld-maps $f,g:(M,A)\rightarrow
(N,B)$ with $f|_{C}=g|_{C}=h$, are \textit{\textbf{ld-homotopic relative to $h$}}, denoted by $f \thicksim_h g$, if there exists an ld-homotopy $H:(M\times I,A \times I)\rightarrow (N,B)$ such that $H(x,0)=f(x)$, $H(x,1)=g(x)$ for all $x\in M$ and $H(x,t)=h(x)$ for all $x\in C$ and $t\in I$. The \textit{\textbf{homotopy set of $(M,A)$ and $(N,B)$ relative to $h$}} is the set
$$[(M,A),(N,B)]_{h}^{\mathcal{R}}=\{f: f:(M,A)\rightarrow (N,B) \textrm{ ld-map in } \mathcal{R}, f|_{C}=h \}/\thicksim_h.$$
If $C=\emptyset$ we omit all references to $h$. We shall denote by $\mathcal{R}_0$ the field structure of the real closed field $R$ of our o-minimal structure $\mathcal{R}$. Given two pairs of regular paracompact locally semialgebraic spaces $(M,A)$ and $(N,B)$ and a locally semialgebraic map $h$ as before, note that we can consider both $[(M,A),(N,B)]_h^{\mathcal{R}_{0}}$ and $[(M,A),(N,B)]_h^{\mathcal{R}}$.

The next theorem is the main result of this section and it establishes a strong relation between the locally definable and the locally semialgebraic homotopy. It is the locally definable analogue of \cite[Cor.3.3]{07pBO}. Recall the behavior of the \LD s under o-minimal expansions in Proposition \ref{prop:epanext}.
\begin{teo}\label{teo:locprinci}Let $(M,A)$ and $(N,B)$ be two pairs of regular paracompact locally semialgebraic spaces. Let $C$ be a closed admissible semialgebraic subspace of $M$ and
$h:C\rightarrow N$ a locally semialgebraic map such that $h(A \cap
C )\subset B$. Suppose $A$ is closed in $M$. Then, the map
$$\begin{array}{rcl}
\rho:[(M,A),(N,B)]_{h}^{\mathcal{R}_{0}} & \rightarrow &
[(M,A),(N,B)]_{h}^{\mathcal{R}} \\

[f] & \mapsto & [f],
\end{array}$$
which sends the locally  semialgebraic homotopic class of a locally semialgebraic map to its locally definable homotopic class, is a bijection.
\end{teo}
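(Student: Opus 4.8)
The plan is to reduce the locally semialgebraic statement to the already-known definable case of \cite[Cor.3.3]{07pBO} by exhausting each space with an increasing sequence of definable pieces and transferring homotopies one piece at a time. First I would use that $M$, $N$ are regular paracompact locally semialgebraic spaces, so by the Triangulation Theorem (Fact \ref{fact:triangulation}) and Remark \ref{rmk:paratriang} I may assume both are (realizations of) locally finite generalized simplicial complexes with vertices defined over the real algebraic numbers, and hence defined over $\mathcal{R}_0$; I would likewise triangulate so that $A$, $B$ and $C$ become subcomplexes. Reducing to the connected case via Fact \ref{fact:conexoporarcos}, I get that $M$ is Lindel\"of (Fact \ref{fact:claudef}.(2)), so I can write $M=\bigcup_n M_n$ as a countable increasing union of open definable (finite) subcomplexes with $M_n\subset \overline{M_n}\subset M_{n+1}$, each $M_n$ defined over $\mathcal{R}_0$, and with $C\subset M_0$. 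The point of working with complexes is that each $\overline{M_n}$ is a closed definable subspace over $\mathcal{R}_0$, i.e. an honest affine definable set in both structures.

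Next I would establish injectivity and surjectivity of $\rho$ separately. For surjectivity, given an ld-map $f:(M,A)\to(N,B)$ in $\mathcal{R}$, I would build a locally semialgebraic representative by induction on $n$: on each $\overline{M_n}$ the restriction $f|_{\overline{M_n}}$ is a definable map into the definable subspace $f(\overline{M_n})\subset N$, so by the \emph{definable} principle \cite[Cor.3.3]{07pBO} it is definably homotopic (relative to the already-constructed semialgebraic map on $\overline{M_{n-1}}$, which serves as the $C$-datum at stage $n$) to a semialgebraic map. Patching these homotopies — which agree on the overlaps by the relative nature of the induction — produces a single locally semialgebraic map ld-homotopic to $f$ over $\mathcal{R}$, giving a preimage. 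For injectivity, if two semialgebraic maps $f,g$ are ld-homotopic over $\mathcal{R}$ via $H:(M\times I,A\times I)\to(N,B)$, I would apply the same exhaustion to $M\times I$ (triangulated compatibly) and use the definable version of the relative homotopy principle stage by stage to replace $H$ with a locally semialgebraic homotopy, thereby showing $[f]=[g]$ already over $\mathcal{R}_0$.

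The delicate, and genuinely hard, part will be the inductive patching: I must guarantee that the homotopies produced on successive pieces $\overline{M_n}$ glue to an ld-homotopy on all of $M$, which requires carrying out each definable homotopy \emph{relative} to the map already fixed on the previous closed piece $\overline{M_{n-1}}$ (so it is not altered), and ensuring the resulting family is locally finite so that it defines a genuine ld-map. Here the hypothesis that $A$ is closed in $M$ is essential: it lets me treat $(\overline{M_n},A\cap\overline{M_n})$ as a pair of closed definable subspaces at each stage and keep the relative basepoint/boundary data consistent, so that the relative definable homotopy principle applies with the inclusion of the previous stage playing the role of the fixed map $h$. I would also need to check that the target definable subspaces $f(\overline{M_n})$ can be taken increasing and affine (which holds by regularity, Remark \ref{obs:regularity}, and Fact \ref{fact:claudef}.(1)), so that the definable homotopy classes match up across stages.

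Finally I would note that each application of \cite[Cor.3.3]{07pBO} is over the correct parameters because the $M_n$, $A$, $B$, $C$ and the fixed map $h$ are all semialgebraic (over $\mathcal{R}_0$) by the algebraic-vertex triangulation; this ensures the constructed semialgebraic maps and homotopies really live in $\mathcal{R}_0$ and not merely in some intermediate expansion. Combining surjectivity and injectivity yields that $\rho$ is a bijection, completing the proof.
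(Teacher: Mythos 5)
Your overall strategy --- exhaust $M$ by an increasing sequence of definable pieces and bootstrap from the definable case \cite[Cor.3.3]{07pBO} --- is the same as the paper's, but there is a genuine gap in the inductive step of your surjectivity argument. At stage $n$ you propose to apply the relative definable principle to $f|_{\overline{M_n}}$ with $C$-datum the semialgebraic map $g_{n-1}$ already constructed on $\overline{M_{n-1}}$. However, \cite[Cor.3.3]{07pBO} concerns homotopy sets of maps that \emph{restrict to the given datum} on the fixed closed subspace: to use $g_{n-1}$ as the datum, the input map must already equal $g_{n-1}$ on $\overline{M_{n-1}}$, whereas $f|_{\overline{M_n}}$ restricts to $f|_{\overline{M_{n-1}}}$, which is only \emph{homotopic} to $g_{n-1}$, not equal to it. As written, the induction never gets past the first stage. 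The missing ingredient is the homotopy extension lemma for LD-spaces (Lemma \ref{lema:extlochom}), which is precisely how the paper repairs this: after homotoping the restriction to the $n$-th closed piece to a locally semialgebraic map relative to the previous datum, one extends that homotopy to an ld-homotopy $H_n$ of \emph{all} of $M$, and the input to stage $n+1$ is the time-one map $H_n(-,1)$ --- which genuinely restricts to $g_n$ on the $n$-th piece --- rather than the original $f$. (You cannot avoid this by ad hoc gluing: $g_{n-1}$ and $f$ do not agree on the boundary of $\overline{M_{n-1}}$, so they do not patch to a continuous map on $\overline{M_n}$.)

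A second, related gap is the final patching. You assert that the stage homotopies ``agree on the overlaps'' and hence glue, but they are homotopies performed \emph{one after another} in time (each starts where the previous one ends), not restrictions of a single homotopy. Gluing them means concatenating infinitely many homotopies, e.g.\ scheduling stage $n$ on the interval $[1-2^{-n},1-2^{-n-1}]$ as the paper does, and one must then verify that the resulting $G:M\times I\rightarrow N$ is an ld-map and behaves correctly at $t=1$. This works only because of a stabilization property coming from the relative character of the construction: for $m>n$ the $m$-th global homotopy is stationary (equal to $g_n$) on the $n$-th piece, so on each $Y_n\times I$ the map $G$ is a finite concatenation of definable homotopies followed by a constant, which is locally definable by the gluing principle Fact \ref{fpegado}. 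Your remaining deviations are harmless: the preliminary triangulation with real algebraic vertices is unnecessary (semialgebraic data already live in $\mathcal{R}_0$, parameters being allowed), and proving injectivity by running the argument on $M\times I$ is a standard alternative to the paper's route, which instead reduces the whole theorem to surjectivity in the case $A=B=\emptyset$ --- a reduction that itself uses Lemma \ref{lema:extlochom}.
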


An important tool for the proof of the above theorem (and in general, for the study of homotopy properties of \ld s) is the following homotopy extension lemma. Even though the proof for locally semialgebraic spaces (see \cite[Cor.III.1.4]{85DK}) can be adapted to the locally definable setting, we have included here an alternative proof which, in particular, does not make use of the Triangulation Theorem of \ld s (see Fact \ref{fact:triangulation}). Firstly, we prove a technical lemma which establishes a gluing principle of ld-maps by closed definable subsets.
\begin{fact}\emph{\cite[Prop. I.3.16]{85DK}}\label{fpegado} Let $M$ be an \LD \ and $\{C_j:j\in J\}$ be an admissible covering of $M$ by closed definable subspaces. Let $N$ be an \LD \ and $f:M\rightarrow N$ be a map (not necessarily continuous) such that $f|_{C_j}$ is an ld-map for each $j\in J$. Then, $f$ is an ld-map.
\end{fact}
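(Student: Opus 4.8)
The plan is to verify directly the two defining conditions of Definition~\ref{def:locmap} for the map $f$, working one chart of $M$ at a time. Fix an atlas $(M,M_i,\phi_i)_{i\in I}$ of $M$; I have to show that for each $i\in I$ the image $f(M_i)$ is a definable subspace of $N$ and that $f|_{M_i}\colon M_i\to f(M_i)$ is a definable map. The whole point of the hypothesis is that although the closed covering $\{C_j\}_{j\in J}$ may be infinite, \emph{admissibility} forces it to become finite on each chart, and this is exactly what reduces the statement to the classical pasting of definable maps over finitely many closed pieces.

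So first I would fix $i\in I$. Since $\{C_j\}_{j\in J}$ is an admissible covering of $M$, there are finitely many indices $j_1,\dots,j_l\in J$ with $M_i=M_i\cap M=M_i\cap(C_{j_1}\cup\cdots\cup C_{j_l})$, that is, $M_i\subseteq C_{j_1}\cup\cdots\cup C_{j_l}$. Put $D_k:=M_i\cap C_{j_k}$ for $k=1,\dots,l$. Each $D_k$ is a finite intersection of definable subspaces, hence a definable subspace of $M$; it is closed in $M_i$ because $C_{j_k}$ is closed in $M$; and the $D_k$ cover $M_i$. Regarding $D_k$ as a definable subspace of $C_{j_k}$, the hypothesis that $f|_{C_{j_k}}$ is an ld-map gives, on the one hand, that $f(D_k)$ is a definable subspace of $N$ (the image of a definable subspace under an ld-map is a definable subspace) and, on the other hand, that $f|_{D_k}\colon D_k\to f(D_k)$ is a definable map (an ld-map between definable spaces is definable; recall that $D_k$ and $f(D_k)$ carry definable space structures by Remark~\ref{obs:admssiblesets}).

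With this in hand the first condition is immediate: $f(M_i)=\bigcup_{k=1}^{l}f(D_k)$ is a finite union of definable subspaces of $N$, hence a definable subspace. For the second condition I would glue. The family $\{D_k\}_{k=1}^{l}$ is a finite covering of the definable space $M_i$ by closed definable subspaces on each of which $f$ is continuous, so $f|_{M_i}$ is continuous by the pasting lemma; moreover its graph equals $\bigcup_{k=1}^{l}\mathrm{graph}(f|_{D_k})$, a finite union of definable sets and therefore definable. Hence $f|_{M_i}\colon M_i\to f(M_i)$ is a continuous map with definable graph, i.e. a definable map. As $i\in I$ was arbitrary, $f$ satisfies both conditions of Definition~\ref{def:locmap} and is an ld-map.

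The one genuinely new step, and hence the place to handle with care, is the reduction in the second paragraph: admissibility of $\{C_j\}_{j\in J}$ is precisely what turns the trace of an arbitrary (possibly uncountable) closed covering on a chart into a \emph{finite} closed covering. Once this localization is achieved, everything else is bookkeeping together with the finite closed-cover pasting already available in the definable category; it is worth noting that neither regularity nor paracompactness of $M$ or $N$ is used, so the statement holds for arbitrary Hausdorff locally definable spaces.
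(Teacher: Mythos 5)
Your proof is correct and follows essentially the same route as the paper's: use admissibility to reduce the closed covering to a finite one on each chart $M_i$, then paste continuity and definability over the finitely many closed pieces $M_i\cap C_{j_k}$, exactly as the paper does. The only cosmetic difference is that you verify definability of $f|_{M_i}$ via its graph being a finite union of definable graphs, while the paper simply cites definability of each restriction $f|_{M_i\cap C_j}$; these amount to the same observation.
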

\begin{proof} Let $(M_i,\phi_i)_{i\in I}$ be the atlas of $M$. We have to prove that the conditions of Definition \ref{def:locmap} are satisfied. Firstly, note that since the covering $\{C_j:j\in J\}$ is admissible, for each $i\in I$ there is a finite subset $J_i\subset J$ such that $M_i\subset \bigcup_{j\in J_i}C_j$. Therefore, since $f|_{M_i\cap C_j}$ is continuous and $M_i\cap C_j$ is a closed subset of $M_i $ for all $j\in J_i$, $f|_{M_i}$ is also continuous for every $i\in I$. Now, to prove that $f(M_i)$ is a definable subspace of $N$ for each $i\in I$,  note that, since each $f|_{C_j}$ is an ld-map and $C_j$ is a definable subspace of $M$, $f(M_i\cap C_j) $ is a definable subspace of $N$ for all $i\in I$ and $j\in J$. Hence, $N_i:=f(M_i)=\bigcup_{j\in J_i}f(M_i\cap C_j)$ is a definable subspace of $N$ for each $i\in I$.  Finally, the map $f|_{M_i}:M_i\rightarrow N_i$ is definable since $f|_{M_i\cap C_j}:M_i\cap C_j\rightarrow N_i$ is definable for all $j\in J_i$.\end{proof}
\begin{lema}[\textbf{\textbf{Homotopy extension lemma}}]\label{lema:extlochom}Let $M,N$ be two \ld s and let $A$ be a closed admissible subspace of $M$. Let $f:M\rightarrow N$ be an ld-map and $H:A\times I\rightarrow N$ a ld-homotopy such that $H(x,0)=f(x)$ for all $x\in A$. Then, there exists a ld-homotopy $G:M\times I\rightarrow N$ such that $G(x,0)=f(x)$ for all $x\in M$ and $G|_{A\times I}=H$. 
\end{lema}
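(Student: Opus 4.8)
The plan is to reduce the locally definable homotopy extension property to its definable (affine) analogue by exploiting paracompactness and the gluing principle just established in Fact~\ref{fpegado}. The classical homotopy extension lemma for a definable pair $(X,D)$ with $D$ closed is available from \cite{07pBO}: one uses that $X\times\{0\}\cup D\times I$ is a definable retract of $X\times I$, so any definable map on $X\times\{0\}\cup D\times I$ extends to $X\times I$. My first step is to recall this definable version as the local model, noting that since $M$ is regular every definable subspace is affine (Remark~\ref{obs:regularity}), so the retraction argument applies verbatim to each definable piece.

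First I would use paracompactness of $M$ to fix a locally finite covering $\{M_i\}_{i\in I}$ of $M$ by open definable subspaces, and pass to the closed definable subspaces $C_i:=\overline{M_i}$, which by Fact~\ref{fact:claudef}.(1) are again definable; these form a locally finite, hence admissible, closed covering of $M$. The intended object to construct is an ld-map $G$ on $M\times I$ agreeing with $f$ on $M\times\{0\}$ and with $H$ on $A\times I$. On each $C_i$, the pair $(C_i, C_i\cap A)$ is a definable pair with $C_i\cap A$ closed, and $H|_{(C_i\cap A)\times I}$ together with $f|_{C_i}$ gives definable initial data on $C_i\times\{0\}\cup(C_i\cap A)\times I$; the definable homotopy extension lemma then produces a definable $G_i:C_i\times I\to N$ extending this data. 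The main obstacle, and the heart of the proof, is that these local extensions $G_i$ need not agree on overlaps $C_i\cap C_j$, so one cannot simply glue them.

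To overcome this I would build $G$ by a transfinite/inductive extension over the pieces rather than by independent choices: well-order $I$ (using Lindel\"ofness on each connected component via Fact~\ref{fact:claudef}.(2), the covering may be taken countable, so an induction over $\mathbb{N}$ suffices), and at stage $n$ extend the map already defined on $\bigl(A\cup C_0\cup\cdots\cup C_{n-1}\bigr)\times I$ across $C_n\times I$. At each stage the set on which $G$ is already defined meets $C_n$ in a closed definable subspace $D_n:=C_n\cap\bigl(A\cup C_0\cup\cdots\cup C_{n-1}\bigr)$, so I again invoke the definable homotopy extension lemma for the definable pair $(C_n,D_n)$ to extend $G|_{D_n\times I}$ (together with $f|_{C_n}$ on $C_n\times\{0\}$) to all of $C_n\times I$. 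This guarantees compatibility by construction. Finally, because the closed covering $\{C_i\times I\}$ of $M\times I$ is admissible and $G$ restricted to each closed definable piece $C_i\times I$ is an ld-map, Fact~\ref{fpegado} assembles these restrictions into a single ld-map $G:M\times I\to N$, which by construction satisfies $G(x,0)=f(x)$ for all $x\in M$ and $G|_{A\times I}=H$, completing the proof.
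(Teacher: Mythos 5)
Your proposal is correct and follows essentially the same route as the paper: reduce to the connected (hence Lindel\"of) case, exhaust $M$ by an increasing sequence of closed definable subspaces (your partial unions $C_0\cup\cdots\cup C_n$ play the role of the paper's $X_n=\bigcup_{k=0}^n\overline{M_k}$), extend inductively using the affine o-minimal homotopy extension lemma of \cite{07pBO} (regularity making each piece affine), and assemble the result with Fact~\ref{fpegado}. The only differences are expository: the paper builds the exhaustion directly from closures of atlas charts rather than from a paracompact covering, and it never needs your preliminary discussion of non-matching independent local extensions.
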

\begin{proof}Without loss of generality, we can assume that $M$ is connected and hence, by Fact \ref{fact:claudef}.(2), that $M$ is Lindel\"{o}f. Let $(M_k,\phi_k)_{k\in \mathbb{N}}$ be an atlas of $M$. Consider $X_n:=\bigcup_{k=0}^n\overline{M_k}$ for each $n\in \mathbb{N}$. By Fact \ref{fact:claudef}.(1) each $X_n$ is a closed definable subspace of $M$ and hence $\{X_n:n\in \mathbb{N}\}$ is an admissible covering by closed definable subspaces such that $X_n\subset X_{n+1}$ for all $n\in \mathbb{N}$. Take the restrictions $f_{n}:=f|_{X_n}$ and $H_n:=H|_{A_n\times I}$, where $A_n$ is the closed definable subspace $A \cap X_n$. Moreover, since $M$ is regular, we can regard each $X_n$ as an affine definable space (see Remark \ref{obs:regularity}). Now, by the o-minimal homotopy extension lemma (Lemma 2.1 in \cite{07pBO}) and applying an induction process, we can find a collection of definable homotopies $G_n:X_n\times I\rightarrow N$ such that $G_n(x,0)=f_n(x)$ for all $x\in X_n$, $G_n|_{X_{n-1}\times I}=G_{n-1}$ and $G_n|_{A_{n}\times I}=H_{n}$. Finally, we define the map $G:M\times I\rightarrow N$ such that $G|_{X_n\times I}=G_n$ for every $n\in \mathbb{N}$. By Fact \ref{fpegado}, the map $G$ is locally definable and, by definition, $G|_{A\times I}=H$ and $G(x,0)=f(x)$ for all $x\in M$.
\end{proof}
\begin{proof}[Proof of Theorem \ref{teo:locprinci}]With the above tools at hand we can follow the lines of the proof of \cite[Thm. III.4.2]{85DK}. Here are the details. As in the definable case, it suffices to prove that $\rho$ is surjective when $A=B=\emptyset$. Indeed, we can do here similar reductions than the ones we followed after \cite[Prop. 3.2]{07pBO} just applying the homotopy extension lemma for \ld s (see Lemma \ref{lema:extlochom}) instead of its definable version. Now, we divide the proof in two cases.\\ 
\textit{ Case $M$ is a semialgebraic space:} Since $M$ is regular, we can assume that it is affine (see Remark \ref{obs:regularity}). Let $f:M\rightarrow N$ be an ld-map such that $f|_C=h$. Since $M$ is semialgebraic, $f(M)$ is a definable subspace of the locally semialgebraic space $N$ and therefore it is contained in the union of a finite number of semialgebraic charts. Hence, there is a semialgebraic subspace $N'$ of $N$  such that $f(M)\subset N'$. Now, since $N$ is regular, we can regard $N'$ also as an affine definable space and therefore we can see the map $f:M\rightarrow N'$ as a definable map between semialgebraic sets (see comments after Definition \ref{def:locmap}). By \cite[Cor. 3.3]{07pBO} (which is the definable version of Theorem \ref{teo:locprinci}), there exist a definable homotopy $H':M\times I\rightarrow N'$ such that $H'(x,0)=f(x)$ for all $x\in M$, $H'(x,t)=h(x)$ for all $x\in C$ and $t\in I$ and $H'(-,1):M\rightarrow N'$ is semialgebraic. Hence, it suffices to consider the definable homotopy $H=i \circ H'$ where $i:N'\rightarrow N$ is the inclusion, to get $\rho([H(-,1)])=[f]$.\\
\textit{General Case:}  Let $f:M\rightarrow N$ be an ld-map such that $f|_{C}=h$. We have to show that $f$ is ld-homotopic relative to $h$ to a locally semialgebraic map. Without loss of generality, we can assume that $M$ is connected and hence, by Fact \ref{fact:claudef}.(2), that $M$ is Lindel\"{o}f. Furthermore, by \cite[Thm. I.4.11]{85DK} (which states the shrinking covering property for regular paracompact locally semialgebraic spaces) there is a locally finite covering $\{X_n:n\in \mathbb{N}\}$ of $M$ by closed semialgebraic subspaces. Consider the closed semialgebraic subspace $Y_n:=X_0\cup \cdots \cup X_n$ and the closed admissible subspace $C_n:=Y_n\cup C$ for each $n\in \mathbb{N}$. By the previous case, there exist a definable homotopy $\widetilde{H}_0:Y_0\times I \rightarrow N$ such that $\widetilde{H}_0(x,0)=f(x)$ for all $x\in Y_0$, $\widetilde{H}_0(-,1):Y_0\rightarrow N$ is a locally semialgebraic map and $\widetilde{H}_0(x,t)=h(x)$ for all $x\in C\cap Y_0$ and $t\in I$. Moreover, by Lemma \ref{lema:extlochom}, there exist an ld-homotopy $H_0:M\times I\rightarrow N$ with $H_0(x,0)=f(x)$ for all $x\in M$, $H_0(x,t)=h(x)$ for all $x\in C$ and $t\in I$ and such that $H_0|_{Y_0\times I}=\widetilde{H}_0$. In particular, $g_0:=H_0|_{C_0\times\{1\}}$ is a locally semialgebraic map with $g_0|_{C}=h$. Now, by iteration we obtain a sequence of ld-homotopies $\{H_n:M\times I\rightarrow N:n\in \mathbb{N}\}$ such that

\vspace*{0.2cm}
\hspace{-0.6cm}(i) $g_n:=H_n|_{C_n\times\{1\}}$ is a locally semialgebraic map,\\
(ii) $H_{n+1}(x,t)=g_{n}(x)$ for all $(x,t)\in C_{n}\times I$ (so $g_{n+1}|_{C_{n}}=g_{n}$), and\\
(iii) $H_{n+1}|_{M\times{\{0\}}}=H_{n}|_{M\times{\{1\}}}$.
\vspace{0.2cm}

Note that in particular $H_n(x,t)=g_0(x)=h(x)$ for all $(x,t)\in C\times I$ and $n\in \mathbb{N}$. By Fact \ref{fpegado}, the map $g:M\rightarrow N$ such that $g|_{C_n}=g_n$ for $n\in \mathbb{N}$, is a locally semialgebraic map. Let us show that $f$ is ld-homotopic to $g$ relative to $h$. The idea is to glue all the homotopies $H_n$ in a correct way. Let $t_n:=1-2^{-n}$ for each $n\in \mathbb{N}$. Consider the map $G:M\times I\rightarrow N$ such that (a) $G(x,t)=H_n(x,\frac{t-t_{n}}{t_{n+1}-t_n})$ for all $x\in M$ and $t\in [t_n,t_{n+1}]$ and (b) $G(x,t)=g(x)$ otherwise. By construction it is clear that $G(x,t)=h(x)$ for all $(x,t)\in C\times I$. It remains to check that $G$ is indeed an ld-map. By Fact \ref{fpegado}, it suffices to show that the restriction $G|_{Y_n\times I}$ is definable for each $n\in \mathbb{N}$. So fix $n\in \mathbb{N}$. By definition, $G|_{Y_n\times [0,t_n]}$ is clearly definable. On the other hand, take $(x,t)\in Y_n\times [t_n,1]$. If $t>t_m$ for every $m\in \mathbb{N}$, then $G(x,t)=g(x)$ by definition. If $t\in [t_{m},t_{m+1}]$ for some $m\geq n$, then $G(x,t)=H_m(x,t)=g_n(x)=g(x)$. Therefore $G|_{Y_n\times [t_n,1]}=g|_{Y_n}$, which is also a definable map. Hence $G|_{Y_n\times I}$ is definable, as required.
\end{proof}
The following corollary is the analogue (and it can be proved adapting its proof) of \cite[Cor.3.4]{07pBO} for \ld s. Recall the definition of the realization of an \ld \ in an elementary extension given before Theorem \ref{teo:invhomolld}.
\begin{cor}\label{cor:ominreallocall}Let $M$ and $N$ be two pairs of regular paracompact locally semialgebraic spaces defined without parameters. Then, there exist a bijection
$$\rho:[M(\mathbb{R}),N(\mathbb{R})] \rightarrow [M,N]^{\mathcal{R}},$$
where $[M(\mathbb{R}),N(\mathbb{R})]$ denotes the classical homotopy set. Moreover, if the real closed field $R$ is a field extension of $\mathbb{R}$, then the result remains true allowing parameters from $\mathbb{R}$.
\end{cor}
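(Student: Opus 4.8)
The plan is to factor the map $\rho$ through the locally semialgebraic homotopy set over the real closed field $R$, so that Theorem \ref{teo:locprinci} supplies one half of the bijection and the comparison and transfer theorems of Delfs and Knebusch in \cite{85DK} supply the other. Write $\mathcal{R}_0$ for the field reduct of $\mathcal{R}$; then $[M,N]^{\mathcal{R}_0}$ is exactly the set of locally semialgebraic homotopy classes of maps $M\to N$ over $R$. Since $M$ and $N$ are regular and paracompact and we are in the absolute case $A=B=C=\emptyset$ (the trivially closed choice $A=\emptyset$ satisfies the hypothesis of Theorem \ref{teo:locprinci}), that theorem already yields a bijection
$$\rho_1:[M,N]^{\mathcal{R}_0}\longrightarrow [M,N]^{\mathcal{R}}$$
sending a locally semialgebraic homotopy class to its locally definable one. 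Thus it remains to identify $[M,N]^{\mathcal{R}_0}$ with the classical homotopy set $[M(\mathbb{R}),N(\mathbb{R})]$.

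For this identification I would argue in two steps, both over parameter-free spaces. First, because $M$ and $N$ are defined without parameters they are realizable over every real closed field, and in particular are defined over the field $\mathbb{R}_{\mathrm{alg}}$ of real algebraic numbers, which embeds into both $R$ and $\mathbb{R}$. The transfer theorem for regular paracompact locally semialgebraic spaces in \cite{85DK} --- the locally semialgebraic form of Tarski--Seidenberg invariance --- then gives a bijection between the locally semialgebraic homotopy set over $R$ and the one over $\mathbb{R}$,
$$[M,N]^{\mathcal{R}_0}\;=\;[M(\mathbb{R}),N(\mathbb{R})]^{\mathrm{sa}},$$
where the right-hand side denotes locally semialgebraic homotopy classes over $\mathbb{R}$. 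Second, the Delfs--Knebusch comparison theorem in \cite{85DK} identifies, for regular paracompact locally semialgebraic spaces over $\mathbb{R}$, the locally semialgebraic homotopy set with the classical topological one,
$$[M(\mathbb{R}),N(\mathbb{R})]^{\mathrm{sa}}\;=\;[M(\mathbb{R}),N(\mathbb{R})].$$
Composing these two equalities with $\rho_1$ produces the asserted bijection $\rho$; the relative refinement carrying data $(A,B)$ and a map $h$ would be obtained in the same way, since Theorem \ref{teo:locprinci} is already stated in that generality.

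For the ``moreover'' clause, suppose $R$ is a field extension of $\mathbb{R}$. Then $\mathbb{R}\subset R$ is a common real closed subfield that can serve as the parameter set, so the real-points construction $M\mapsto M(\mathbb{R})$ is defined directly on spaces definable over $\mathbb{R}$. Both steps above survive: the transfer theorem is now applied to the single extension $\mathbb{R}\subset R$ and respects parameters from $\mathbb{R}$, while Theorem \ref{teo:locprinci} applies verbatim because a space semialgebraic over $\mathbb{R}$ is in particular $\mathcal{R}_0$-definable once $\mathbb{R}\subset R$. Hence the whole chain of bijections goes through with parameters in $\mathbb{R}$.

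The step I expect to be the main obstacle is the middle one: making sure that the Delfs--Knebusch transfer and comparison theorems are invoked in the genuinely locally (not merely affine) semialgebraic, regular and paracompact setting, and that the realizations of $M$ and $N$ over $R$ and over $\mathbb{R}$ are matched correctly under base change, so that the same objects feed into Theorem \ref{teo:locprinci} and into the comparison theorem. This is precisely where the hypothesis that $M$ and $N$ are defined without parameters is essential, since it is what makes the two realizations correspond; once this bookkeeping is in place the remainder is a formal composition of bijections.
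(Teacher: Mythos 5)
Your proposal is correct and follows essentially the same route as the paper, which proves this corollary by adapting the proof of the definable analogue \cite[Cor.3.4]{07pBO}: namely, composing the bijection of Theorem \ref{teo:locprinci} between $[M,N]^{\mathcal{R}_0}$ and $[M,N]^{\mathcal{R}}$ with the Delfs--Knebusch base-field-extension and topological-comparison theorems for regular paracompact locally semialgebraic spaces, both available precisely because $M$ and $N$ are defined without parameters (respectively, with parameters in $\mathbb{R}$ when $\mathbb{R}\subset R$). Your identification of the matching of realizations under base change as the delicate bookkeeping point is also consistent with how the paper treats it.
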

Note that both Theorem \ref{teo:locprinci} and Corollary \ref{cor:ominreallocall} remain true for systems of \ld s (see \cite[Cor.3.3]{07pBO}). Thanks to the Triangulation Theorem for \ld s (see Fact \ref{fact:triangulation}), we have also the following corollary (see the proof of \cite[Cor.3.6]{07pBO}, noting that the finiteness of the simplicial complexes plays an irrelevant role).
\begin{cor}Let $M$ and $N$ be \ld s defined without parameters. Then, any ld-map $f:M\rightarrow N$ is ld-homotopic to an ld-map $g:M\rightarrow N$ defined without parameters. If moreover $M$ and $N$ are locally semialgebraic spaces then $g$ can also be taken locally semialgebraic.
\end{cor}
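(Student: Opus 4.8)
The plan is to reduce the statement to the two earlier results that make it almost immediate. Recall that we want, for \ld s $M$ and $N$ defined without parameters, that every ld-map $f:M\to N$ is ld-homotopic to one defined without parameters, and that this can be achieved by a locally semialgebraic $g$ when $M$ and $N$ are locally semialgebraic. The key observation, already signalled in the remark preceding the corollary, is that the finiteness of the simplicial complexes in the definable analogue \cite[Cor.3.6]{07pBO} is irrelevant to the argument; what matters is only that we have a triangulation whose vertices are tuples of real algebraic numbers. This is exactly what the Triangulation Theorem for \ld s (Fact \ref{fact:triangulation}) together with Remark \ref{rmk:paratriang} provides.

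First I would triangulate. By Fact \ref{fact:triangulation} and Remark \ref{rmk:paratriang}, since $M$ is a regular paracompact \LD \ (an \ld) there is a locally finite generalized simplicial complex $K$ with vertices real algebraic numbers and an ld-homeomorphism $\psi:|K|\to M$ which, because $M$ is defined without parameters, can be taken over the real algebraic numbers (in particular $\psi$ and $\psi^{-1}$ are locally semialgebraic). Doing the same to $N$ gives a complex $L$ and an ld-homeomorphism $\varphi:|L|\to N$ with the analogous properties. Replacing $f$ by the composite $\varphi^{-1}\circ f\circ \psi:|K|\to |L|$, it suffices to prove the statement for ld-maps between realizations of such parameter-free complexes, and then transport the resulting homotopy back through $\psi$ and $\varphi$ (which introduce no new parameters).

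Next I would invoke Corollary \ref{cor:ominreallocall}, or rather the parameter-free content already inside the proof of Theorem \ref{teo:locprinci}, together with the case analysis of that proof. The point is that $|K|$ and $|L|$ are realizations of complexes defined over the real algebraic numbers, hence are (parameter-free) locally semialgebraic spaces; the bijection $\rho:[|K|,|L|]^{\mathcal{R}_0}\to[|K|,|L|]^{\mathcal{R}}$ of Theorem \ref{teo:locprinci} shows that the ld-homotopy class of $f$ contains a locally semialgebraic representative $g_0$. Since a locally semialgebraic map between parameter-free realizations is built, chart by chart, from semialgebraic (hence $\emptyset$-definable up to real algebraic data) pieces, $g_0$ is defined without parameters. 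Transporting back, $g:=\varphi\circ g_0\circ\psi^{-1}$ is an ld-map defined without parameters and ld-homotopic to $f$, and when $M$ and $N$ are themselves locally semialgebraic we may take $\psi,\varphi$ locally semialgebraic so that $g$ is locally semialgebraic as well.

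The main obstacle I expect is bookkeeping of parameters through the infinitely many charts of the generalized simplicial complex, rather than any genuinely new idea. In the definable case of \cite[Cor.3.6]{07pBO} one has a \emph{finite} complex, so one triangulates, pushes the map to a simplicial approximation with vertices among the algebraic vertex data, and reads off that the result is parameter-free in one finite step. Here $K$ and $L$ are only locally finite, so I must check that the surjectivity half of Theorem \ref{teo:locprinci} (which already handles the passage from an ld-map to a locally semialgebraic one by the iterated gluing over the exhaustion $Y_n:=X_0\cup\cdots\cup X_n$) indeed yields a representative that is parameter-free on each $Y_n$ compatibly, so that the glued map is globally parameter-free. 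This is precisely where the remark's comment that ``finiteness plays an irrelevant role'' must be verified: the inductive construction in the proof of Theorem \ref{teo:locprinci} proceeds by finite stages, and at each stage the locally semialgebraic homotopy supplied by the semialgebraic case can be chosen over the real algebraic numbers, so no parameters accumulate in the limit. Once this compatibility is confirmed, the rest is a routine transport of structure through $\psi$ and $\varphi$.
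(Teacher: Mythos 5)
Your outline---triangulate via Fact \ref{fact:triangulation} and Remark \ref{rmk:paratriang}, transport $f$ to a map between realizations of parameter-free complexes, then apply Theorem \ref{teo:locprinci}---is the same reduction the paper intends, but the step where you actually eliminate the parameters is wrong. You claim that the locally semialgebraic representative $g_0$ produced by the bijection $\rho$ of Theorem \ref{teo:locprinci} ``is defined without parameters'' because it is built, chart by chart, from semialgebraic pieces between parameter-free spaces. This is false: semialgebraic means definable \emph{with parameters} in the field language, and a (locally) semialgebraic map between $\emptyset$-definable spaces need not be $\emptyset$-definable. For instance, with $M=N=R$, the map $x\mapsto x+c$ for a transcendental $c$, or the constant map at $c$, is semialgebraic, and no chart-by-chart inspection will make it parameter-free. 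Note also that if your claim were correct, the second assertion of the corollary would trivially subsume the first; the very fact that the statement separates ``defined without parameters'' from ``locally semialgebraic'' signals that these are independent properties requiring separate arguments.

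What is missing is precisely the ingredient supplied by the proof of \cite[Cor.3.6]{07pBO}, to which the paper's proof defers: after Theorem \ref{teo:locprinci} reduces the problem to the locally semialgebraic category, one must invoke the Delfs--Knebusch invariance of locally semialgebraic homotopy sets under extension of the real closed base field (\cite[Thm. III.4.2]{85DK}), applied to the extension $R$ of the field of real algebraic numbers. Since $|K|$ and $|L|$ are defined over the real algebraic numbers, that theorem says $g_0$ is locally semialgebraically homotopic to the realization over $R$ of a locally semialgebraic map defined over the real algebraic numbers, i.e.\ defined without parameters; composing with your parameter-free triangulations $\psi,\varphi$ then yields both assertions at once. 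Your last paragraph gestures at this (``the homotopy \ldots can be chosen over the real algebraic numbers'') but offers no justification, and the justification is exactly this transfer principle: it is not a bookkeeping matter of parameters failing to accumulate through the exhaustion $Y_n$, but a genuine application of the Tarski--Seidenberg-based semialgebraic theory, which is unavailable in the definable category and is the whole reason the argument must pass through the semialgebraic one.
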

\subsection{Homotopy groups of locally definable spaces}\label{shomotgrld}
The homotopy groups in the locally definable category are defined as in the definable setting using ld-maps instead of the definable ones (see Section 4 in \cite{07pBO}). Specifically, given a \textit{pointed \ld} \ $(M,x_0)$, i.e., $M$ is an \ld \ and $x_0\in M$, we define the $n$-\textbf{homotopy group} as the homotopy set $\pi_n(M,x_0)^{\mathcal{R}}:=[(I^n,\partial I^n),(M,x_0)]^{\mathcal{R}}$. We define $\pi_0(M,x_0)$ as the collection of all connected components of $M$ (which coincide with the collection of the path connected ones by Fact \ref{fact:conexoporarcos}). We say that $(M,A,x_{0})$ is a \textit{pointed pair of \ld s} if $M$ is an \ld, $A$ is an admissible subspace of $M$ and $x_0\in A$. The \textbf{relative $n$-homotopy group}, $n\geq 1$, of a pointed pair $(M,A,x_{0})$ of \ld s is the homotopy set $\pi_{n}(X,A,x_{0})^{\mathcal{R}}=[(I^{n},I^{n-1},J^{n-1}),(X,A,x_{0})]^{\mathcal{R}}$,
where $I^{n-1}=\{(t_{1},\ldots,t_{n})\in I^{n}:t_{n}=0 \}$ and
$J^{n-1}=\overline{\partial I^{n}\setminus I^{n-1}}$.

As in the definable case (see Section 4 in \cite{07pBO}), we can see that the homotopy groups $\pi_{n}(M,x_{0})^{\mathcal{R}}$ and 
$\pi_{m}(M,A,x_{0})^{\mathcal{R}}$ are indeed groups for $n\geq 1$ and $m\geq 2$, the group operation is defined via the usual concatenation of maps. Moreover, these groups are abelian for $n\geq 2$ and $m\geq 3$. Also, given an ld-map between pointed \ld s (or pointed pairs of \ld s), we define the induced map in homotopy, as usual, by composing. The latter will be a group homomorphism in the case we have a group structure. It is easy to check that with these definitions of homotopy group and induced map, both the absolute and relative homotopy groups $\pi_n(-)$ are covariant functors.

The following three results (and their relative versions) can be deduced from Theorem \ref{teo:locprinci} (see the proofs of \cite[Thm.4.1]{07pBO}, \cite[Cor.4.3]{07pBO} and \cite[Cor.4.4]{07pBO}).

\begin{cor}\label{rhoesismorfis}For every regular paracompact locally semialgebraic pointed space $(M,x_0)$ and every $n\geq 1$, the map $\rho:\pi_{n}(M,x_0)^{\mathcal{R}_{0}}\rightarrow \pi_{n}(M,x_0)^{\mathcal{R}}:[f]\mapsto [f]$, is a natural isomorphism. 
\end{cor}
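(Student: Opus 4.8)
The plan is to deduce the statement directly from Theorem \ref{teo:locprinci}, in exactly the way \cite[Thm.4.1]{07pBO} is obtained from \cite[Cor.3.3]{07pBO} in the definable setting. Recall that by definition $\pi_n(M,x_0)^{\mathcal{R}}=[(I^n,\partial I^n),(M,x_0)]^{\mathcal{R}}$, so the homotopy group is a homotopy set of pairs with empty relative part. I would therefore apply Theorem \ref{teo:locprinci} to the domain pair $(I^n,\partial I^n)$ and the target pair $(M,\{x_0\})$, taking the closed admissible subspace $C$ to be empty, so that all references to the map $h$ are omitted.

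First I would check the hypotheses of Theorem \ref{teo:locprinci} for this instantiation. The cube $I^n$ is an affine definable space, hence a regular paracompact locally semialgebraic space, and $\partial I^n$ is a closed semialgebraic subspace; $M$ is a regular paracompact locally semialgebraic space by hypothesis and $\{x_0\}$ is an admissible subspace of $M$. Since $C=\emptyset$ the conditions on $C$ and $h$ are vacuous, and the requirement that the distinguished subspace be closed becomes the trivially true statement that $\partial I^n$ is closed in $I^n$. Theorem \ref{teo:locprinci} then yields at once that the map
$$\rho:\pi_n(M,x_0)^{\mathcal{R}_0}=[(I^n,\partial I^n),(M,x_0)]^{\mathcal{R}_0}\rightarrow [(I^n,\partial I^n),(M,x_0)]^{\mathcal{R}}=\pi_n(M,x_0)^{\mathcal{R}}$$
is a bijection.

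It remains to upgrade this bijection to a natural isomorphism of groups for $n\geq 1$. Here I would use that the group operation on both $\pi_n(M,x_0)^{\mathcal{R}_0}$ and $\pi_n(M,x_0)^{\mathcal{R}}$ is the very same concatenation of representatives: the concatenation of two locally semialgebraic maps $(I^n,\partial I^n)\rightarrow (M,x_0)$ is again such a map and coincides with the concatenation computed in $\mathcal{R}$. Since $\rho$ sends each class to the class of the same representative, it commutes with concatenation and is therefore a group homomorphism; a bijective homomorphism is an isomorphism. Naturality is equally immediate: for a locally semialgebraic pointed map $\varphi:(M,x_0)\rightarrow(M',x_0')$ both induced homomorphisms are given by $[f]\mapsto[\varphi\circ f]$, so $\rho\circ\varphi_*^{\mathcal{R}_0}=\varphi_*^{\mathcal{R}}\circ\rho$. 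I do not expect a genuine obstacle here: the whole content sits in Theorem \ref{teo:locprinci} (and, through it, in the Homotopy extension Lemma \ref{lema:extlochom}), while the passage from bijection to group isomorphism is formal, the only point deserving a word being that the multiplication is literally the same operation in both categories.
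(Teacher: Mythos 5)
Your proposal is correct and is essentially the paper's own argument: the paper deduces Corollary \ref{rhoesismorfis} from Theorem \ref{teo:locprinci} exactly as \cite[Thm.4.1]{07pBO} is deduced from \cite[Cor.3.3]{07pBO}, i.e., by applying the theorem to the pairs $(I^n,\partial I^n)$ and $(M,\{x_0\})$ with $C=\emptyset$ and then observing that $\rho$ respects the concatenation operation and composition with ld-maps. Your checks of the hypotheses (closedness of $\partial I^n$, vacuity of the conditions on $C$ and $h$) and the formal upgrade from bijection to natural group isomorphism are exactly the intended content.
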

\begin{cor}\label{cor:homtopiadefigualtopo}Let $(M,x_{0})$ be a regular paracompact locally semialgebraic pointed space defined without parameters. Then, there exists a natural isomorphism between the classical homotopy group
$\pi_{n}(M(\mathbb{R}),x_{0})$ and the homotopy group 
$\pi_{n}(M(R),x_{0})^{\mathcal{R}}$ for every $n\geq 1$.
\end{cor}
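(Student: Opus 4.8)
The plan is to realize both groups as relative homotopy sets and deduce the underlying bijection from the systems version of Corollary \ref{cor:ominreallocall}, then upgrade it to a natural group isomorphism. By definition $\pi_{n}(M(R),x_{0})^{\mathcal{R}}=[(I^{n},\partial I^{n}),(M,x_{0})]^{\mathcal{R}}$, while $\pi_{n}(M(\mathbb{R}),x_{0})$ is the set of based homotopy classes of continuous maps $(I^{n},\partial I^{n})\to(M(\mathbb{R}),x_{0})$. Now the semialgebraic pair $(I^{n},\partial I^{n})$ is defined without parameters (over $\mathbb{Q}$) and $(M,x_{0})$ is a regular paracompact locally semialgebraic pointed space without parameters, so the relative (systems) version of Corollary \ref{cor:ominreallocall} — taken with closed subspace $C=\partial I^{n}$ and the constant datum $h\equiv x_{0}$ — produces a bijection
\[
\rho:\pi_{n}(M(\mathbb{R}),x_{0})\longrightarrow \pi_{n}(M(R),x_{0})^{\mathcal{R}},\qquad [f]\mapsto[f].
\]

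First I would check that $\rho$ is a group homomorphism. The operation on both sides is the usual concatenation of representatives along the last coordinate, defined by one and the same piecewise reparametrisation formula. Tracing the construction of Corollary \ref{cor:ominreallocall}, which factors through the locally semialgebraic homotopy over $\mathcal{R}_{0}$ via Theorem \ref{teo:locprinci} (where $\rho$ is literally the identity on representatives, cf.\ Corollary \ref{rhoesismorfis}), one sees that $\rho$ sends the concatenation of two representatives to the concatenation of their images. Hence $\rho([f]\cdot[g])=\rho([f*g])=[f*g]=\rho([f])\cdot\rho([g])$, so $\rho$ is a bijective homomorphism, that is, a group isomorphism (abelian for $n\geq 2$).

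Finally I would verify naturality: for a based locally semialgebraic map $\varphi:(M,x_{0})\to(N,y_{0})$ defined without parameters, the induced homomorphism is post-composition $\varphi_{*}[f]=[\varphi\circ f]$ in each of the three categories (classical, $\mathcal{R}_{0}$, $\mathcal{R}$), and since $\rho$ is compatible with composition — again by its description on representatives — the square relating $\varphi_{*}$ across $\rho$ commutes. This mirrors the proof of \cite[Cor.4.4]{07pBO}, replacing the definable tools by their locally definable counterparts. The main obstacle is precisely the homomorphism step: the set-level bijection of Corollary \ref{cor:ominreallocall} carries no group structure a priori, so one must descend to the locally semialgebraic representatives over $\mathcal{R}_{0}$, where $\rho$ is the identity on maps and concatenation is given by an identical formula in all categories; once this is observed, both the homomorphism property and naturality become routine.
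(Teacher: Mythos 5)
Your proposal is correct and follows essentially the same route as the paper: the paper deduces this corollary from Theorem \ref{teo:locprinci} via the relative/systems version of Corollary \ref{cor:ominreallocall}, following the definable-case argument of \cite[Cor.4.3]{07pBO}, which is precisely your scheme of realizing $\pi_n$ as a relative homotopy set, obtaining the set-level bijection, and then checking the homomorphism property and naturality on representatives through the locally semialgebraic intermediate category.
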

\begin{cor}\label{cinvextexp}The homotopy groups are invariants under elementary extensions and o-minimal expansions.
\end{cor}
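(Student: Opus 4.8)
The plan is to reduce the statement about homotopy groups to the corresponding statements about homotopy sets that have already been established, exactly as in the definable case. The target is Corollary \ref{cinvextexp}, which asserts that the homotopy groups of \ld s are invariant under elementary extensions and o-minimal expansions. The key observation is that homotopy groups are defined as homotopy sets $\pi_n(M,x_0)^{\mathcal{R}}=[(I^n,\partial I^n),(M,x_0)]^{\mathcal{R}}$ (and the relative version analogously), so the natural vehicle is Corollary \ref{rhoesismorfis}, which already provides a natural isomorphism $\rho:\pi_n(M,x_0)^{\mathcal{R}_0}\to\pi_n(M,x_0)^{\mathcal{R}}$ for regular paracompact locally semialgebraic pointed spaces.

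First I would treat invariance under o-minimal expansions. Given an o-minimal expansion $\mathcal{R}'$ of $\mathcal{R}$ and an \ld \ $M$ in $\mathcal{R}$, the Triangulation Theorem (Fact \ref{fact:triangulation}) together with Remark \ref{rmk:paratriang} lets me replace $M$ (up to ld-homeomorphism, hence up to isomorphism of homotopy groups) by the realization $|K|$ of a locally finite generalized simplicial complex whose vertices are tuples of real algebraic numbers; in particular $|K|$ is defined without parameters and is realizable as a locally semialgebraic space over $\mathcal{R}_0$. Since $\pi_n$ is a functor invariant under ld-homeomorphism, it suffices to prove the statement for such an $|K|$. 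Then Corollary \ref{rhoesismorfis} gives natural isomorphisms $\pi_n(|K|)^{\mathcal{R}_0}\cong\pi_n(|K|)^{\mathcal{R}}$ and $\pi_n(|K|)^{\mathcal{R}_0}\cong\pi_n(|K|)^{\mathcal{R}'}$, both factoring through the locally semialgebraic homotopy group; composing these isomorphisms yields the desired natural isomorphism $\pi_n(M)^{\mathcal{R}}\cong\pi_n(M)^{\mathcal{R}'}$. This mirrors precisely the argument already used for homology in Theorem \ref{teo:invhomolld}, where the common ``base'' object is the semialgebraic one and expansion-invariance is read off from the semialgebraic comparison.

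For invariance under elementary extensions, I would argue similarly: passing to the triangulation reduces to $|K|$ defined without parameters, and then Corollary \ref{cor:homtopiadefigualtopo} (or the realization comparison before Theorem \ref{teo:invhomolld}) relates $\pi_n(M(\mathcal{R}))^{\mathcal{R}}$ and $\pi_n(M(\mathcal{R}_1))^{\mathcal{R}_1}$ through the common locally semialgebraic homotopy group, giving the natural isomorphism. The point is that once everything is triangulated with algebraic vertices, the combinatorial data of $K$ is the same over $\mathcal{R}$ and over any elementary extension $\mathcal{R}_1$, so the two o-minimal homotopy groups are both naturally isomorphic to the locally semialgebraic one.

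The main obstacle I anticipate is not the isomorphism-chasing but checking that the reduction to a locally semialgebraic triangulation is legitimate and that the resulting isomorphisms are genuinely natural (compatible with induced maps of ld-maps), so that the word ``invariant'' is justified in the functorial sense. In particular, one must verify that the triangulation can be chosen over the relevant parameter set and with algebraic vertices simultaneously for the absolute and relative cases, and that Corollary \ref{rhoesismorfis} applies uniformly; the relative homotopy groups $\pi_m(M,A,x_0)^{\mathcal{R}}$ require that the admissible subspace $A$ also be carried along by the triangulation, which is guaranteed by the ``partitioning'' clause of Fact \ref{fact:triangulation}. Given all this machinery is already in place, the proof is essentially a transcription of the homology argument, and I would simply remark that it follows from Corollary \ref{rhoesismorfis} together with the Triangulation Theorem, in the same way \cite[Cor.4.4]{07pBO} follows in the definable case.
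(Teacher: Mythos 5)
Your proposal is correct and follows essentially the same route as the paper: the paper deduces this corollary from Theorem \ref{teo:locprinci} by adapting the definable-case proof of \cite[Cor.4.4]{07pBO}, which is precisely your argument of triangulating with real algebraic vertices (Fact \ref{fact:triangulation} and Remark \ref{rmk:paratriang}) and then comparing both o-minimal homotopy groups with the locally semialgebraic one via the isomorphism $\rho$ of Corollary \ref{rhoesismorfis} (respectively, via Corollary \ref{cor:homtopiadefigualtopo} for elementary extensions).
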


All the results of Section 4 in \cite{07pBO} remains true in the locally definable category. We recall here briefly these results.\\

\vspace*{-0.3cm}
\hspace{-0.6cm}(1) \textit{The homotopy property}: If two ld-maps are ld-homotopic then they induce the same homomorphism between the homotopy groups.\\

\vspace*{-0.3cm}
\hspace{-0.6cm}(2) \textit{The exactness property}: Given a pointed pair $(M,A,x_0)$ of \ld s, the following sequence is exact,
\begin{small}$$ \cdots \rightarrow \pi_n(A,x_0)\stackrel{i_*}{\rightarrow} \pi_n(M,x_0)\stackrel{j_*}{\rightarrow} \pi_n(M,A,x_0)\stackrel{\partial}{\rightarrow} \pi_{n-1}(A,x_0) \rightarrow \cdots \rightarrow\pi_0(A,x_0),$$
\end{small}
\hspace{-0.2cm}where $\partial$ is the usual boundary map $\partial: \pi_n(M,A,x_0)^{\mathcal{R}}\rightarrow \pi_{n-1}(A,x_0)^{\mathcal{R}}:[f]\mapsto [f|_{I^{n-1}}]$ and $i:(A,x_0)\rightarrow (M,x_0)$ and $j:(M,x_0,x_0)\rightarrow (M,A,x_0)$ are the inclusions (and the superscript $\mathcal{R}$ has been omitted).\\

\vspace*{-0.3cm}
\hspace{-0.6cm}(3) \textit{The action of $\pi_1$ on $\pi_n$}: Given a pointed \ld \ $(M,x_0)$, there is an action $\beta:\pi_1(M,x_0)^{\mathcal{R}}\times \pi_n(M,x_0)^{\mathcal{R}}\rightarrow \pi_n(M,x_0)^{\mathcal{R}}$ such that $\beta_{[u]}:=\beta([u],-):\pi_{n}(M,x_{0})^{\mathcal{R}} \rightarrow  \pi_{n}(M,x_{0})^{\mathcal{R}}$ is an isomorphism for every $[u]\in \pi_1(M,x_0)^{\mathcal{R}}$. In a similar way, given a pointed pair $(M,A,x_0)$ of \ld s, there is an action $\beta:\pi_1(A,x_0)^{\mathcal{R}}\times \pi_n(M,A,x_0)^{\mathcal{R}}\rightarrow \pi_n(M,A,x_0)^{\mathcal{R}}$ such that $\beta_{[u]}:=\beta([u],-):\pi_{n}(M,A,x_{0})^{\mathcal{R}} \rightarrow  \pi_{n}(M,A,x_{0})^{\mathcal{R}}$ is an isomorphism for every $[u]\in \pi_1(A,x_0)^{\mathcal{R}}$.\\

\vspace*{-0.3cm}
The homotopy property is clear by definition. The exactness property can be proved with a straightforward adaptation of the proof of the classical one. Alternatively, we can also transfer the classical exactness property using the Triangulation Theorem (see Fact \ref{fact:triangulation}) and Corollary \ref{rhoesismorfis}. Finally, the existence of the action of $\pi_1$ on $\pi_n$ is just an application of the homotopy extension lemma (see Lemma \ref{lema:extlochom} and \cite[Prop.4.6.3)]{07pBO}). Furthermore, the following technical lemma is easy to prove (see the proof of \cite[Lem.4.7]{07pBO}).
\begin{lema}\label{lema:empujacurvas}Let $(M,x_0)$ and $(N,y_0)$ two pointed \ld s. Let $\psi:(M,x_{0}) \rightarrow (N,y_{0})$ be an ld-map and let $[u]\in \pi_{1}(M,x_{0})^{\mathcal{R}}$. Then, for all $[f]\in \pi_{n}(M,x_0)^{\mathcal{R}}$, $\psi_{*}(\beta_{[u]}([f]))=\beta_{\psi_{*}([u])}(\psi_{*}([f]))$.
\end{lema}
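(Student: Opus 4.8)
The plan is to unwind the construction of the $\pi_1$-action $\beta$, which (as recalled in item (3) above) is built by means of the homotopy extension lemma (Lemma \ref{lema:extlochom}), and then to observe that it commutes with post-composition by an ld-map. First I would fix representatives: a loop $u\colon(I,\partial I)\rightarrow(M,x_0)$ of $[u]$ and an ld-map $f\colon(I^n,\partial I^n)\rightarrow(M,x_0)$ of $[f]$. Recall that $\beta_{[u]}([f])$ is then computed as follows. Consider the ld-homotopy $H\colon\partial I^n\times I\rightarrow M$ defined by $H(s,t)=u(t)$; it agrees with $f$ on $\partial I^n\times\{0\}$ since $u(0)=x_0=f(s)$ for $s\in\partial I^n$. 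Applying Lemma \ref{lema:extlochom} with $\partial I^n$ playing the role of the closed admissible subspace of $I^n$, we extend $H$ to an ld-homotopy $G\colon I^n\times I\rightarrow M$ with $G(-,0)=f$ and $G|_{\partial I^n\times I}=H$. Since $G(s,1)=u(1)=x_0$ for $s\in\partial I^n$, the map $G(-,1)$ is an ld-map of pairs $(I^n,\partial I^n)\rightarrow(M,x_0)$, and by definition $\beta_{[u]}([f])=[G(-,1)]$.

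The key step is then immediate: I would simply post-compose the whole picture with $\psi$. Because $\psi$ is an ld-map and $G$ is an ld-homotopy, and ld-maps are closed under composition, the composite $\psi\circ G\colon I^n\times I\rightarrow N$ is again an ld-homotopy. It satisfies $(\psi\circ G)(-,0)=\psi\circ f$, a representative of $\psi_*([f])$, while for $s\in\partial I^n$ one has $(\psi\circ G)(s,t)=\psi(u(t))=(\psi\circ u)(t)$, and $\psi\circ u$ represents $\psi_*([u])$. Thus $\psi\circ G$ is precisely an admissible choice of extension in the recipe computing $\beta_{\psi_*([u])}(\psi_*([f]))$, whence $\beta_{\psi_*([u])}(\psi_*([f]))=[(\psi\circ G)(-,1)]$. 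On the other hand, by the definition of the induced map, $\psi_*(\beta_{[u]}([f]))=[\psi\circ G(-,1)]=[(\psi\circ G)(-,1)]$. Comparing the two expressions yields the desired equality.

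The one point requiring care — and which I expect to be the main (if minor) obstacle — is that the value $\beta_{[u]}([f])$ must be independent of the chosen extension $G$ and of the chosen representatives $u$ and $f$; this is exactly the well-definedness built into the construction of $\beta$ recalled in item (3) above (the locally definable analogue of the corresponding statement in \cite{07pBO}). Granted that independence, the argument shows that the two a priori different elements of $\pi_n(N,y_0)^{\mathcal{R}}$ — push $f$ around $u$ and then apply $\psi$, versus apply $\psi$ first and push $\psi\circ f$ around $\psi\circ u$ — are realized by one and the same ld-homotopy $\psi\circ G$, so they coincide. No new definability issue arises beyond those already handled by Lemma \ref{lema:extlochom}, since every map occurring is obtained from ld-maps by composition.
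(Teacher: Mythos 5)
Your proof is correct and takes essentially the same route as the paper's: the paper gives no independent argument but defers to \cite[Lem.4.7]{07pBO}, whose proof is exactly this one---unwind the construction of $\beta$ via the homotopy extension lemma (Lemma \ref{lema:extlochom}) and observe that post-composing the extending homotopy $G$ with $\psi$ produces an admissible extension computing $\beta_{\psi_*([u])}(\psi_*([f]))$. Your appeal to the well-definedness of $\beta$ (independence of the chosen representatives and extension) is legitimate, as that is built into the construction of the action recalled in item (3).
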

The only part of Section 4 in \cite{07pBO} which has not an obvious extension to \ld s is the one which concerning fibrations. Naturally, we say that an ld-map $p:E\rightarrow B$ between \ld s is a \textbf{(Serre) fibration} if it has the homotopy lifting property for each (resp. closed and bounded) definable sets. As in \cite[Rmk. 4.8]{07pBO}, the homotopy lifting property for closed simplices implies the homotopy lifting property for pairs of closed and bounded definable sets. Note that the restriction of a (Serre) fibration to the preimage of a definable subspace is not necessarily a definable (resp. Serre) fibration. However, the fibration property (see \cite[Thm.4.9]{07pBO}) for \ld s can be proved just adapting directly the classical proof.
\begin{teo}[The fibration property]\label{teo:fibrationproperty} Let $B$ and $E$ be \ld s. Then, for every Serre fibration $p:E\rightarrow B$, the induced map $p_*:\pi_n(E,F,e_0)^{\mathcal{R}}\rightarrow \pi_n(B,b_0)^{\mathcal{R}}$ is a bijection for $n=1$ and an isomorphism for all $n\geq 2$, where $e_0\in F=p^{-1}(b_0)$. 
\end{teo}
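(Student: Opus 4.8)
The plan is to adapt directly the classical proof of the fibration exact sequence, the only genuinely new points being that every map and homotopy involved must be an ld-map and that the model spaces built from cubes are closed and bounded definable sets, so that the homotopy lifting property built into the definition of a Serre fibration applies to them. Throughout I will use the two standard homeomorphisms of pairs
$(I^{n},J^{n-1})\cong(I^{n-1}\times I,\,I^{n-1}\times\{0\})$ and $(I^{n}\times I,\,I^{n}\times\{0,1\}\cup J^{n-1}\times I)\cong(I^{n}\times I,\,I^{n}\times\{0\})$; since these are the usual piecewise-linear (radial) homeomorphisms they are semialgebraic, hence ld-maps, and composing with them keeps everything inside the locally definable category. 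I also record that each cube $I^{k}$ is a closed bounded definable set definably homeomorphic to a closed simplex, so by \cite[Rmk. 4.8]{07pBO} the homotopy lifting property is available for all the test objects below.

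First I would prove surjectivity of $p_*$. Given $[a]\in\pi_n(B,b_0)^{\mathcal{R}}$ represented by an ld-map $a\colon(I^n,\partial I^n)\to(B,b_0)$, the first homeomorphism lets me regard $a$ as an ld-homotopy $I^{n-1}\times I\to B$ whose restriction to $I^{n-1}\times\{0\}$ (the image of $J^{n-1}$) is constantly $b_0$. Taking as initial lift the constant map to $e_0$, the homotopy lifting property for the closed bounded definable set $I^{n-1}$ produces an ld-lift $\tilde a\colon I^n\to E$ with $p\circ\tilde a=a$ and $\tilde a|_{J^{n-1}}\equiv e_0$. Since $a(I^{n-1})=b_0$, we get $\tilde a(I^{n-1})\subset p^{-1}(b_0)=F$, so $\tilde a$ is an ld-map of triples $(I^n,I^{n-1},J^{n-1})\to(E,F,e_0)$ with $p_*[\tilde a]=[a]$.

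Next I would prove injectivity. Suppose $\tilde a_0,\tilde a_1\colon(I^n,I^{n-1},J^{n-1})\to(E,F,e_0)$ satisfy $p_*[\tilde a_0]=p_*[\tilde a_1]$, and let $G\colon I^n\times I\to B$ be an ld-homotopy of triples from $p\circ\tilde a_0$ to $p\circ\tilde a_1$, so that $G(\partial I^n\times I)=b_0$. On the closed subspace $K:=I^n\times\{0,1\}\cup J^{n-1}\times I$ I assemble a partial lift from $\tilde a_0$, $\tilde a_1$ and the constant $e_0$ (these agree on the overlaps $J^{n-1}\times\{0,1\}$, so by the gluing Fact \ref{fpegado} this is an ld-map lifting $G|_{K}$). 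Because $K$ is exactly the copy of the distinguished face produced by the second homeomorphism, the homotopy lifting property for the closed bounded definable set $I^{n}$ yields an ld-lift $\tilde G\colon I^n\times I\to E$ extending it with $p\circ\tilde G=G$. Then $G(I^{n-1}\times I)=b_0$ forces $\tilde G(I^{n-1}\times I)\subset F$, while $\tilde G|_{J^{n-1}\times I}\equiv e_0$ by construction, so $\tilde G$ is a homotopy of triples exhibiting $[\tilde a_0]=[\tilde a_1]$ in $\pi_n(E,F,e_0)^{\mathcal{R}}$. For $n\geq 2$ the map $p_*$ is moreover a group homomorphism, being induced by an ld-map and respecting concatenation, so a bijective homomorphism is an isomorphism; for $n=1$ the relative set $\pi_1(E,F,e_0)^{\mathcal{R}}$ is only a pointed set and one obtains a bijection of pointed sets.

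The one point needing care, and essentially the only obstacle, is to ensure the lifts stay in the locally definable category rather than being merely continuous. This is automatic, because the homotopy lifting property in the definition of a Serre fibration over closed and bounded definable sets delivers ld-lifts directly, and by \cite[Rmk. 4.8]{07pBO} the cubes $I^{k}$ and the pairs $(I^{k}\times I,\,I^{k}\times\{0\})$ are legitimate test objects; the remaining work is bookkeeping, namely checking that the two pair-homeomorphisms are definable and that each lifting datum is assembled on a closed definable subspace (via Fact \ref{fpegado}) so that the definable homotopy lifting property genuinely applies. No homotopy-theoretic idea beyond the definable lifting property is required.
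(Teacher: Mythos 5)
Your proposal is correct and takes essentially the same approach as the paper: the paper's entire proof consists of the observation that the homotopy lifting property for closed simplices gives the lifting property for (pairs of) closed and bounded definable sets, after which ``the fibration property for LD-spaces can be proved just adapting directly the classical proof.'' Your write-up is exactly that adaptation --- the two semialgebraic pair homeomorphisms together with the definable lifting property, glued via Fact \ref{fpegado} --- so it supplies precisely the details the paper leaves to the reader.
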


As in the definable setting (see \cite[Prop.4.10]{07pBO}), the main examples of fibrations are the covering maps. Given two \LD s $E$ and $B$, a \textbf{covering map} $p:E\rightarrow B$ is a surjective ld-map $p$ such that there is an admissible covering $\{U_i:i\in I\}$ of $B$ by open definable subspaces and for each $i\in I$ and each connected component $V$ of $p^{-1}(U_i)$, the restriction $p|_{V}:V\rightarrow U_i$ is a locally definable homeomorphism (so in particular both $V$ and $p|_V$ are definable).
\begin{prop}\label{prop:recufibracionlocal}Let $B$ and $E$ be \ld s. Then, every covering map $p:E\rightarrow B$ is a fibration.
\end{prop}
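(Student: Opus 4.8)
The plan is to reduce the statement for ld-spaces to the classical (or definable) fact that covering maps are fibrations, by exploiting the admissibility and local finiteness features of the ld-category. The key observation is that a fibration only needs the homotopy lifting property for closed and bounded definable sets (by the remark preceding Theorem~\ref{teo:fibrationproperty}), so a lifting problem always involves only a definable amount of data at a time, even though $E$ and $B$ themselves may be genuinely infinite ld-spaces.

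First I would set up the lifting problem: given a closed and bounded definable set $P$, a definable homotopy $F:P\times I\rightarrow B$, and an initial lift $\widetilde{f}_0:P\times\{0\}\rightarrow E$ with $p\circ\widetilde{f}_0=F|_{P\times\{0\}}$, I must produce a lift $\widetilde{F}:P\times I\rightarrow E$. Since $P\times I$ is a definable subspace and $F$ is an ld-map, the image $F(P\times I)$ is a \emph{definable} subspace of $B$. By admissibility of the covering $\{U_i:i\in I\}$, this image meets only finitely many of the $U_i$, so there is a finite $J\subset I$ with $F(P\times I)\subset\bigcup_{i\in J}U_i$. Thus the whole lifting problem takes place over the \emph{definable} subspace $B_0:=\bigcup_{i\in J}U_i$ of $B$, and the covering map restricts to a covering map $p^{-1}(B_0)\rightarrow B_0$ in the ordinary (definable-space) sense, with definable sheets $V$ over each $U_i$. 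Since $M$ is regular, each of these definable subspaces can be taken affine (Remark~\ref{obs:regularity}), so we are genuinely reduced to a covering map of definable affine spaces.

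Next I would invoke the definable analogue, namely that a definable covering map is a definable fibration (this is the definable version, \cite[Prop.4.10]{07pBO}, of the statement being proved), to solve the lifting problem over $B_0$. The standard way to carry this out, mirroring the classical argument, is to triangulate or cellularly subdivide $P\times I$ finely enough (using the o-minimal cell decomposition compatible with the preimages $F^{-1}(U_i)$, $i\in J$) so that each piece maps into a single $U_i$; then one lifts piece by piece, using on each cell the fact that $p|_V:V\rightarrow U_i$ is an ld-homeomorphism to define the lift uniquely once its value at one point is fixed, and gluing consistently along the subdivision by the standard connectedness/uniqueness-of-lifts argument. The resulting $\widetilde{F}$ is definable on $P\times I$ and agrees with $\widetilde{f}_0$ on $P\times\{0\}$, which is exactly the required lift. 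Because $\widetilde{F}$ is continuous and locally coincides with the definable local inverses of $p$, it is an ld-map by the gluing principle (Fact~\ref{fpegado}).

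The main obstacle I anticipate is \emph{not} the lifting itself but the bookkeeping that guarantees everything stays definable and finite: one must check carefully that $F(P\times I)$ really is a definable subspace (which follows from $F$ being an ld-map on the definable domain $P\times I$) and hence meets only finitely many charts, so that the infinitary nature of $B$ never enters a single lifting step. Once this finiteness reduction is in place, the argument is a faithful transcription of the classical proof that covering maps have the homotopy lifting property, with ``continuous'' replaced by ``definable'' and with Fact~\ref{fpegado} supplying the passage from local definability back to an honest ld-map; no new o-minimal input beyond the definable fibration result and the cell decomposition of $P\times I$ is needed.
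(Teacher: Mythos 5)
Your core argument --- the image $F(P\times I)$ is a definable subspace of $B$, hence lies in finitely many trivializing charts $U_i$, after which one lifts over a fine cell decomposition of $P\times I$ one definable sheet at a time and glues the pieces using uniqueness of lifts and Fact~\ref{fpegado} --- is exactly the paper's proof. (The paper organizes it as: uniqueness of liftings for connected ld-spaces, then path lifting and homotopy lifting by the same finiteness-plus-subdivision device, then the passage to the homotopy lifting property for definable sets as in \cite[Prop.4.10]{07pBO}.) Two points in your framing, however, need repair.

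First, the claim that $p$ restricts to ``a covering map of definable affine spaces'' $p^{-1}(B_0)\rightarrow B_0$ is false in general: $p^{-1}(B_0)$ is an admissible, but usually not definable, subspace of $E$, since it may consist of infinitely many sheets (for a universal covering of a definable group, the preimage of the whole definable base is the whole non-definable total space). So the definable result \cite[Prop.4.10]{07pBO} cannot be invoked as a black box for this restriction; the paper makes the analogous cautionary remark about restricting fibrations to preimages of definable subspaces. This does not sink your proof, because the hands-on construction you describe only ever uses one definable sheet $V$ over one chart $U_i$ at a time and never needs $p^{-1}(B_0)$ itself to be definable --- but the reduction sentence should be deleted, and the appeal to \cite[Prop.4.10]{07pBO} understood as ``repeat its proof,'' not ``apply its statement.'' Second, you restrict attention to closed and bounded $P$, asserting that a fibration only needs the homotopy lifting property for such sets; the remark before Theorem~\ref{teo:fibrationproperty} does not say this --- it says that the homotopy lifting property for closed simplices yields that for pairs of closed and bounded definable sets, i.e., it concerns Serre fibrations. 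As written you would only prove that $p$ is a Serre fibration, whereas the proposition asserts it is a fibration (homotopy lifting for \emph{all} definable sets). Fortunately nothing in your argument uses closedness or boundedness of $P$: cell decomposition and the finiteness of the charts meeting a definable image work for arbitrary definable sets, so the fix is simply to run the same argument for arbitrary definable $P$.
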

\begin{proof}Firstly, note that coverings satisfy the unicity of liftings as in the definable case (see \cite[Lem.2.5]{04EO}). Indeed, given a connected \ld \ $Z$ and two ld-maps $\widetilde{f}_1,\widetilde{f}_2:Z\rightarrow E$ with $p\circ \widetilde{f}_1=p\circ \widetilde{f}_2$ and $\widetilde{f}_1(z)=\widetilde{f}_2(z)$ for some $z\in Z$, we have that $\widetilde{f}_1=\widetilde{f}_2$. This is so because both $\{z\in Z: \widetilde{f}_1(z)=\widetilde{f}_2(z)\}$ and $\{z\in Z: \widetilde{f}_1(z)\neq \widetilde{f}_2(z)\}$ are clopen admissible subspaces of $Z$. The path lifting and the homotopy lifting properties also remain true for $p$ (see the definable case in \cite[Prop.2.6]{04EO} and \cite[Prop.2.7]{04EO}). To see this for the path lifting property take an admissible covering $\{U_j:j \in J \}$ of $B$ as in the definition of covering map. Let $\gamma:I \rightarrow B$ be an ld-curve. Since $\gamma(I)$ is a definable subspace of $B$, we have that $\gamma(I)\subset \bigcup_{j\in J_0}U_j$ for some finite subset $J_0$ of $J$. Now, by the shrinking covering property of definable sets, there are $0=s_0<s_1<\cdots < s_r=1$ such that for each $i=0,\ldots,r-1$ we have $\gamma([s_i,s_{i+1}])\subset U_{j(i)}$ and $\gamma(s_{i+1})\in U_{j(i)}\cap U_{j(i+1)}$. Hence, by the unicity of liftings, it suffices to lift each $\gamma|_{[s_i,s_{i+1}]}$ step by step using the definable homeomorphism $p|_{V_{j(i)}}:V_{j(i)}\rightarrow U_{j(i)}$ for the suitable connected component $V_{j(i)}$ of $p^{-1}(U_{j(i)})$. The proof of the homotopy lifting property is similar.

Finally, the above properties and the fact that the images of definable sets by ld-maps are definable subspaces, give us the homotopy lifting property for definable sets as in \cite[Prop.4.10]{07pBO}.
\end{proof}
\begin{cor}\label{cgrcovld}Let $B$ and $E$ be \ld s. Let $p:E\rightarrow B$ be a covering and let $p(e_0)=b_0$. Then, $p_*:\pi_n(E,e_0)^{\mathcal{R}}\rightarrow \pi_n(B,b_0)^{\mathcal{R}}$ is an isomorphism for every $n>1$ and injective for $n=1$.
\end{cor}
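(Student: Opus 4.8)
The plan is to reproduce the classical covering-space computation, using the two tools just established: the fact that a covering is a Serre fibration (Proposition \ref{prop:recufibracionlocal}) together with the fibration property (Theorem \ref{teo:fibrationproperty}), and the long exact homotopy sequence provided by the exactness property. The observation that starts everything is that the fiber $F=p^{-1}(b_0)$ is a \emph{discrete} admissible subspace of $E$, so that its higher homotopy groups vanish; once this is in place the corollary falls out of a short diagram chase.

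First I would justify the discreteness. Since $\{b_0\}$ is a definable subspace of $B$ and preimages of admissible subspaces under ld-maps are admissible, $F=p^{-1}(b_0)$ is an admissible subspace of $E$ and inherits an \LD{} structure. Choosing an open definable subspace $U$ from the defining admissible covering of the covering map with $b_0\in U$, the preimage $p^{-1}(U)$ is the disjoint union of its connected components, each carried homeomorphically onto $U$ by $p$. Hence each such component meets $F$ in exactly one point, and since the components are open and pairwise disjoint, every point of $F$ is isolated; thus $F$ is discrete. Because $I^n$ is connected and every ld-map from a connected \LD{} into a discrete one is constant, we obtain $\pi_n(F,e_0)^{\mathcal{R}}=0$ for all $n\geq 1$.

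Next I would run the two exact inputs in parallel. By Proposition \ref{prop:recufibracionlocal}, $p$ is a Serre fibration, so Theorem \ref{teo:fibrationproperty} gives that $p_*:\pi_n(E,F,e_0)^{\mathcal{R}}\rightarrow \pi_n(B,b_0)^{\mathcal{R}}$ is a bijection for $n=1$ and an isomorphism for $n\geq 2$. Feeding $\pi_n(F,e_0)^{\mathcal{R}}=0$ into the exactness property for the pair $(E,F,e_0)$ and reading the fragment
$$\pi_n(F,e_0)^{\mathcal{R}}\rightarrow \pi_n(E,e_0)^{\mathcal{R}}\stackrel{j_*}{\rightarrow}\pi_n(E,F,e_0)^{\mathcal{R}}\stackrel{\partial}{\rightarrow}\pi_{n-1}(F,e_0)^{\mathcal{R}}$$
shows that $j_*$ is an isomorphism for $n\geq 2$ (both neighbouring terms vanish) and has trivial kernel for $n=1$. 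By naturality of $j_*$ with respect to $p$, together with the canonical identification $\pi_n(B,\{b_0\},b_0)^{\mathcal{R}}=\pi_n(B,b_0)^{\mathcal{R}}$, the induced map on absolute groups factors as $\pi_n(E,e_0)^{\mathcal{R}}\stackrel{j_*}{\rightarrow}\pi_n(E,F,e_0)^{\mathcal{R}}\stackrel{p_*}{\rightarrow}\pi_n(B,b_0)^{\mathcal{R}}$; composing the two isomorphisms settles the case $n\geq 2$.

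The one delicate point, and the main obstacle, is the case $n=1$, where $\pi_1(E,F,e_0)^{\mathcal{R}}$ is merely a pointed set, so trivial kernel of $j_*$ does not automatically give injectivity of $j_*$ as a map of sets. Here I would exploit that in degree one $p_*$ is an honest \emph{bijection} of pointed sets: the preimage under the composite $p_*\circ j_*$ of the basepoint of $\pi_1(B,b_0)^{\mathcal{R}}$ is exactly $j_*^{-1}$ of the basepoint of $\pi_1(E,F,e_0)^{\mathcal{R}}$, i.e. the kernel of $j_*$, which is trivial by exactness. Consequently the group homomorphism $p_*:\pi_1(E,e_0)^{\mathcal{R}}\rightarrow \pi_1(B,b_0)^{\mathcal{R}}$ has trivial kernel and is therefore injective. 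Apart from this pointed-set bookkeeping in degree one, every step is a direct transcription of the classical argument made available by the results proved above.
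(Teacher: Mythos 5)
Your proposal is correct and follows exactly the paper's own route: discreteness of the fiber $F=p^{-1}(b_0)$, hence $\pi_n(F,e_0)^{\mathcal{R}}=0$ for $n\geq 1$, combined with Proposition \ref{prop:recufibracionlocal}, the fibration property (Theorem \ref{teo:fibrationproperty}) and the exactness property. The paper states this in three lines; you have merely filled in the details it leaves implicit, including the isolation of points of $F$ and the pointed-set bookkeeping needed to get injectivity in degree one.
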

\begin{proof}Since $p$ is a covering, $p^{-1}(b_0)$ is discrete. Hence $\pi_n(p^{-1}(b_0),e_0)=0$ for every $n\geq 1$. Then, the result follows from Proposition \ref{prop:recufibracionlocal} and both the exactness and the fibration properties.
\end{proof}

We end this subsection with one the motivations for considering the locally definable category. 
\begin{fact}\emph{\cite[ Thm.5.11]{84DK}}\label{fact:recubrimientos} Let $B$ be a connected \LD, $b_0\in B$ and let $L$ be a subgroup of $\pi_1(B,b_0)^{\mathcal{R}}$. Then, there exists connected \LD \ $E$ and a covering  $p:E\rightarrow B$ with $p_*(\pi_1(E,e_0)^{\mathcal{R}})=L$ for some $e_0\in p^{-1}(b_0)$. Moreover, if $B$ is an \ld \ then $E$ is also an \ld.
\end{fact}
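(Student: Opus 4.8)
The plan is to carry out the classical construction of the covering associated to a subgroup of the fundamental group, rephrasing every ingredient in the locally definable language and keeping control of local finiteness. Since $B$ is connected, by Fact \ref{fact:conexoporarcos} it is ld-path-connected, so I may work with ld-paths issuing from $b_0$. I would take $E$ to be the set of classes $[\gamma]$ of ld-paths $\gamma\colon I\to B$ with $\gamma(0)=b_0$, where $\gamma_1$ and $\gamma_2$ are identified when $\gamma_1(1)=\gamma_2(1)$ and the loop $\gamma_1\cdot\overline{\gamma_2}$ represents an element of $L$ in $\pi_1(B,b_0)^{\mathcal{R}}$. The projection is $p([\gamma])=\gamma(1)$ and the basepoint is $e_0=[c_{b_0}]$, the class of the constant path.

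First I would equip $E$ with a locally definable structure. The key local input is that every chart of $B$ is a definable set, hence locally definably contractible (definable sets are triangulable); shrinking charts I obtain an admissible covering $\{U_j\}_{j\in J}$ of $B$ by open definable subspaces which are definably path-connected and along which every ld-loop is ld-null-homotopic in $B$. Over such a $U_j$ the covering trivializes: for $[\gamma]\in E$ with $\gamma(1)\in U_j$ I define the sheet $V_{[\gamma],j}$ of all classes $[\gamma\cdot\delta]$ with $\delta$ an ld-path in $U_j$ issuing from $\gamma(1)$, and $p$ restricts to a bijection $V_{[\gamma],j}\to U_j$. Transporting the definable structure of $U_j$ (see Remark \ref{obs:admssiblesets}) through these bijections produces an atlas for $E$; checking that the transition maps are definable and that distinct sheets over the same $U_j$ are disjoint is routine and simultaneously shows that $E$ is Hausdorff and that $p$ is a covering map in the sense of the paragraph preceding Proposition \ref{prop:recufibracionlocal}. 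Connectedness of $E$ follows because every $[\gamma]$ is joined to $e_0$ by the lift of $\gamma$.

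Next I would compute $p_*$. By Corollary \ref{cgrcovld} the map $p_*\colon\pi_1(E,e_0)^{\mathcal{R}}\to\pi_1(B,b_0)^{\mathcal{R}}$ is injective, so it suffices to identify its image. Using the path-lifting property established inside the proof of Proposition \ref{prop:recufibracionlocal}, an ld-loop $\omega$ at $b_0$ lifts to a loop at $e_0$ precisely when the endpoint $[\omega]$ of its lift equals $e_0$, that is, when $[\omega]\in L$; hence $p_*(\pi_1(E,e_0)^{\mathcal{R}})=L$, as required.

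The hard part will be the final clause, that $E$ inherits regularity and paracompactness when $B$ is an \ld. Regularity is a local property and $p$ is a local ld-homeomorphism, so it transfers from $B$ to $E$. For paracompactness I would begin with a locally finite covering $\{W_k\}$ of $B$ by open definable subspaces refining $\{U_j\}$, which exists since $B$ is paracompact, and take the sheets $V_{k,\lambda}$, namely the connected components of $p^{-1}(W_k)$; each is an open definable subspace of $E$, since it lies in a single chart and maps ld-homeomorphically onto $W_k$. The decisive point is the local finiteness of $\{V_{k,\lambda}\}$. Given a chart $M_i$ of $E$, its image $p(M_i)$ is a definable subspace of $B$ and hence meets only finitely many $W_k$; for each such $W_k$ the set $M_i\cap p^{-1}(W_k)$ is a definable subset of $M_i$ (as $p|_{M_i}$ is definable and $W_k$ is definable), so it has only finitely many definably connected components, and since the sheets $V_{k,\lambda}$ are open and closed in $p^{-1}(W_k)$ each intersection $M_i\cap V_{k,\lambda}$ is a union of some of these components; therefore only finitely many $\lambda$ occur. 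Thus $\{V_{k,\lambda}\}$ is a locally finite covering of $E$ by open definable subspaces, so $E$ is paracompact by definition, completing the proof; for the bare connected \LD\ statement one simply omits this last paragraph.
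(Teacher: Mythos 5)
Your construction is exactly the paper's: $E$ is the set of classes of ld-paths issuing from $b_0$ modulo $L$, charted by sheets over an admissible covering $\{U_j\}$ of $B$ by definably path-connected open definable subspaces produced by triangulating charts, with $p_*(\pi_1(E,e_0)^{\mathcal{R}})=L$ extracted from the lifting properties of Proposition \ref{prop:recufibracionlocal}; and your ``decisive point'' -- that local finiteness of the sheet covering follows because a definable set has only finitely many definably connected components -- is precisely the paper's paracompactness argument (the paper applies the count directly to the atlas of sheets over a locally finite choice of $\{U_j\}$, you to a locally finite refinement $\{W_k\}$; triangulating individual charts even covers the general ld-space case, which the paper quotes from \cite{84DK} rather than proving). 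One small repair: a connected component of $p^{-1}(W_k)$ maps ld-homeomorphically onto $W_k$ only if $W_k$ is connected, so first replace each $W_k$ by its finitely many definably connected components (this preserves local finiteness); definability of the components $V_{k,\lambda}$ follows in any case from Fact \ref{fact:conexoporarcos}.

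The genuine flaw is your justification of regularity. ``Regularity is a local property'' is false, and conspicuously so in this category: \emph{every} locally definable space is locally regular, since each chart is homeomorphic to an affine definable set, and yet non-regular Hausdorff definable spaces exist (Robson's example, Chapter 10 of \cite{98Dr}, quoted in Section \ref{sexamples}). As written, your argument would prove that every \LD \ is regular. What makes the step true is the global covering structure, not local homeomorphy: given $x\in E$ and an open $W\ni x$, let $V$ be the sheet through $x$ over some $U_j$; since $p(W\cap V)$ is open in $B$, regularity of $B$ gives a closed definable subspace $C'$ of $B$ with $p(x)\in int(C')$ and $C'\subset p(W\cap V)$; then $C:=(p|_V)^{-1}(C')$ is a definable neighbourhood of $x$ contained in $W$, and $C$ is closed in $E$: any point of $\overline{C}$ maps into $C'\subset U_j$, hence lies in some sheet over $U_j$, and every sheet other than $V$ is an open set disjoint from $C$, so $\overline{C}\subset V$ and therefore $\overline{C}=cl_{_V}(C)=C$. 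With this correction your proof is complete and agrees with the one given in the paper for \ld s.
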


We give in Appendix \ref{sappcov} a proof of this fact for \ld s, for completeness.
\subsection{The Hurewicz and Whitehead theorems for locally definable spaces}\label{shurewiczld}
We define the Hurewicz homomorphism in a similar same way as in the definable case but using the homology groups developed in Section \ref{shomol}.  We fix a generator $z_n^{\mathcal{R}_0}$ of $H_n(I^n,\partial I^n)^{\mathcal{R}_0}$ (recall that $H_n(I^n,\partial I^n)^{\mathcal{R}_0}\cong\mathbb{Z}$). Let $z_n^{\mathcal{R}}:=\theta(z_n^{\mathcal{R}_0})$, where $\theta$ is the natural transformation of Notation \ref{nota:homolsaomin} between both the (locally) semialgebraic and the (locally) definable homology groups. Given a pointed \ld \ $(M,x_0)$, the \textit{\textbf{Hurewicz homomorphism}}, for $n\geq 1$, is the map $h_{n,\mathcal{R}}:\pi_{n}(M,x_{0})^{\mathcal{R}} \rightarrow H_{n}(M)^{\mathcal{R}}:[f] \mapsto h_{n,\mathcal{R}}( [f] )=f_{*}(z_{n}^{\mathcal{R}})$, where $f_{*}:H_{n}(I^{n},\partial I)^{\mathcal{R}}\rightarrow
H_{n}(M)^{\mathcal{R}}$ denotes the map in singular homology induced by $f$. We define the relative Hurewicz homomorphism adapting in the obvious way what was done in the absolute case. It is easy to check that $h_{n,\mathcal{R}}$ is a natural transformation between the functors $\pi_{n}(-)^{\mathcal{R}}$ and
$H_{n}(-)^{\mathcal{R}}$. The following result can be easily deduced from the naturality of the isomorphisms $\rho$ and $\theta$ introduced in Corollary \ref{rhoesismorfis} and Notation \ref{nota:homolsaomin} respectively (see \cite[Prop.5.1]{07pBO}).
\begin{prop}\label{lema:homohurdefsa}Let $(M,x_{0})$ be a pointed regular paracompact locally semialgebraic space. Then, the following diagram commutes
\begin{displaymath}
\xymatrix{ \pi_{n}(M,x_{0})^{\mathcal{R}_{0}}
\ar[r]^{h_{n,\mathcal{R}_{0}}} \ar[d]_{\rho} &
     H_{n}(M)^{\mathcal{R}_{0}} \ar[d]^{\theta}\\
\pi_{n}(M,x_{0})^{\mathcal{R}} \ar[r]_{h_{n,\mathcal{R}}} &
H_{n}(M)^{\mathcal{R}}}
\end{displaymath}
for all $n\geq 1$.
\end{prop}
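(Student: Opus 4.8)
The plan is to reduce the commutativity of the square entirely to the naturality of the transformation $\theta$, with $\rho$ contributing only the fact that it leaves representatives unchanged. Fix $[f]\in \pi_n(M,x_0)^{\mathcal{R}_0}$, represented by a locally semialgebraic map $f:(I^n,\partial I^n)\rightarrow (M,x_0)$. By the description of $\rho$ in Corollary \ref{rhoesismorfis} we have $\rho([f])=[f]$, the same map now regarded as an ld-map, so that
$$h_{n,\mathcal{R}}(\rho([f]))=f_*(z_n^{\mathcal{R}})=f_*\bigl(\theta(z_n^{\mathcal{R}_0})\bigr),$$
where $f_*$ denotes the homomorphism induced in o-minimal homology and we used $z_n^{\mathcal{R}}:=\theta(z_n^{\mathcal{R}_0})$. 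On the other hand $\theta(h_{n,\mathcal{R}_0}([f]))=\theta\bigl(f_*(z_n^{\mathcal{R}_0})\bigr)$, where now $f_*$ is the homomorphism induced in semialgebraic homology. Thus the whole statement comes down to the single identity $f_*\bigl(\theta(z_n^{\mathcal{R}_0})\bigr)=\theta\bigl(f_*(z_n^{\mathcal{R}_0})\bigr)$, that is, to the naturality of $\theta$ along $f$.

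To verify this naturality for the particular map $f$, I would exploit that $I^n$ is a single definable chart, so its image $X:=f(I^n)$ is a definable subspace of $M$ and $f_*:H_n(I^n,\partial I^n)\rightarrow H_n(M)$ factors as
$$H_n(I^n,\partial I^n)\xrightarrow{\;f_*\;} H_n(X)\xrightarrow{\;(i_X)_*\;} H_n(M),$$
with $(i_X)_*$ the inclusion-induced homomorphism used to form the direct limit of Theorem \ref{teo:limdirhom}. Over the first arrow both $I^n$ and $X$ are affine definable sets (Remark \ref{obs:regularity}), so the relevant naturality square for $\theta$ is exactly the naturality of the definable isomorphism of \cite[Prop.3.2]{02BeO}. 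Over the second arrow, naturality holds because $\theta$ is defined as the direct limit of the definable isomorphisms $F_Y$ (proof of Theorem \ref{teo:invhomolld} and Remark \ref{obs:directlimit}), which are compatible with the inclusion-induced maps defining the limit. Composing the two naturality squares yields the displayed identity and hence the commutativity of the diagram.

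The argument is essentially formal, so there is no deep obstacle; the only point deserving care --- and the one I would write out --- is precisely that $\theta$ is compatible with induced maps, i.e. that the passage to the direct limit preserves the naturality supplied at the definable level by \cite[Prop.3.2]{02BeO}. Because $f$ factors through a single definable subspace, this reduces to a finite diagram chase rather than any genuinely infinitary bookkeeping, and the naturality of $\theta$ is inherited verbatim from the definable case.
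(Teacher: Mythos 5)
Your proposal is correct and follows essentially the same route as the paper, which deduces the commutativity directly from the fact that $\rho$ fixes representatives together with the naturality of $\theta$ (as asserted in Notation \ref{nota:homolsaomin} and Theorem \ref{teo:invhomolld}, with the definable case \cite[Prop.5.1]{07pBO} as model). Your extra step --- factoring $f_*$ through the definable image $X=f(I^n)$ and checking naturality of $\theta$ first at the affine level via \cite[Prop.3.2]{02BeO} and then through the direct-limit construction --- is just a careful unpacking of the naturality the paper invokes, not a different argument.
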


Now, the proofs in the definable setting of the Hurewicz and the Whitehead theorems (see \cite[Thm.5.3]{07pBO} and \cite[Thm.5.6]{07pBO}) apply for \ld s just using (i) the locally definable category instead of the definable one, (ii) the respective isomorphisms $\rho$ and $\theta$ of Theorem \ref{teo:locprinci} and Notation \ref{nota:homolsaomin} instead of the definable ones and (iii) the Triangulation Theorem for \ld s (see Fact \ref{fact:triangulation}). Note that in the proofs of the definable versions of the Hurewicz and Whitehead theorems, the finiteness of the simplicial complexes plays an irrelevant role. Specifically, we have the following results (recall the action $\beta$ of $\pi_1$ on $\pi_n$ defined after Corollary \ref{cinvextexp}).
\begin{teo}[\textbf{Hurewicz theorems}]\label{teo:Hureabsolocal}Let $(M,x_{0})$ be a pointed \ld \ and $n \geq 1$. Suppose that
$\pi_{r}(M,x_{0})^{\mathcal{R}}=0$ for  every $0\leq r \leq n-1$.
Then, the Hurewicz homomorphism $$h_{n,\mathcal{R}}:\pi_{n}(M,x_{0})^{\mathcal{R}}\rightarrow
H_{n}(M)^{\mathcal{R}}$$ is surjective and its kernel is the subgroup generated by $\{\beta_{[u]}([f])[f]^{-1}: [u]\in
\pi_{1}(M,x_{0})^{\mathcal{R}},[f]\in \pi_{n}(M,x_{0})^{\mathcal{R}}\}$. In particular, $h_{n,\mathcal{R}}$ is an isomorphism for $n\geq 2$.
\end{teo}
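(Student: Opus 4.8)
The plan is to follow the proof of the definable Hurewicz theorem \cite[Thm.5.3]{07pBO} essentially verbatim, replacing the definable category by the locally definable one and the definable transfer isomorphisms by the locally definable isomorphisms $\rho$ and $\theta$ built up above. The observation that makes this work is that, in the definable argument, the finiteness of the underlying simplicial complex is never used. Concretely, I would reduce the statement by three transfers — from $(M,x_0)$ to a simplicial model, then from $\mathcal{R}$ to the pure real closed field $\mathcal{R}_0$, and finally from $\mathcal{R}_0$ to classical algebraic topology — and then invoke the classical Hurewicz theorem, transporting the conclusion back.

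First I would triangulate. By the Triangulation Theorem (Fact \ref{fact:triangulation}) together with Remark \ref{rmk:paratriang}, $(M,x_0)$ is ld-homeomorphic to a pointed realization $(|K|,v_0)$ of a locally finite generalized simplicial complex $K$ whose vertices are tuples of real algebraic numbers; in particular $|K|$ is a regular paracompact locally semialgebraic space defined without parameters. Since $\pi_n(-)^{\mathcal{R}}$ and $H_n(-)^{\mathcal{R}}$ are functors and $h_{n,\mathcal{R}}$ is a natural transformation between them, replacing $M$ by $|K|$ is harmless. Next I would pass from $\mathcal{R}$ to $\mathcal{R}_0$: by Proposition \ref{lema:homohurdefsa} the Hurewicz homomorphisms over $\mathcal{R}_0$ and over $\mathcal{R}$ fit into a commutative square whose vertical arrows are the isomorphisms $\rho$ of Corollary \ref{rhoesismorfis} and $\theta$ of Notation \ref{nota:homolsaomin}. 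Because $\rho$ and $\theta$ are isomorphisms, surjectivity of $h_{n,\mathcal{R}}$ and the description of its kernel follow from the corresponding statements for $h_{n,\mathcal{R}_0}$, once the generating sets of the kernels are matched; this matching is where I would invoke Lemma \ref{lema:empujacurvas}, which shows that $\rho$ intertwines the actions $\beta$ of $\pi_1$ on $\pi_n$, so that $\rho$ carries each generator $\beta_{[u]}([f])[f]^{-1}$ over $\mathcal{R}_0$ to the analogous generator over $\mathcal{R}$.

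It then remains to prove the statement over $\mathcal{R}_0$ for $|K|$. Here I would use Corollary \ref{cor:homtopiadefigualtopo} for homotopy, and the homology analogue obtained from Theorem \ref{teo:invhomolld} and the identification of locally semialgebraic homology over $\mathbb{R}$ with classical singular homology, to identify — naturally and compatibly with the Hurewicz maps — the o-minimal homotopy and homology groups of $|K|$ over $\mathcal{R}_0$ with the classical homotopy and homology groups of the topological realization $|K|(\mathbb{R})$. Under this identification the hypothesis $\pi_r(M,x_0)^{\mathcal{R}}=0$ for $0\le r\le n-1$ becomes the vanishing of the classical homotopy groups, and $\beta$ becomes the classical $\pi_1$-action. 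Since $|K|(\mathbb{R})$ is a (possibly infinite) CW complex, the classical Hurewicz theorem applies and gives that $h_n$ is onto with kernel generated by the elements $\beta_{[u]}([f])[f]^{-1}$; transporting this back through the two chains of isomorphisms yields the statement for $h_{n,\mathcal{R}}$. The ``in particular'' is then immediate: for $n\ge 2$ the hypothesis forces $\pi_1(M,x_0)^{\mathcal{R}}=0$, so $\beta$ is trivial, every kernel generator is trivial, and $h_{n,\mathcal{R}}$ is an isomorphism. Note also that for $n=1$ the action $\beta$ is conjugation, so the generators are commutators and the kernel is the commutator subgroup, recovering $H_1(M)^{\mathcal{R}}\cong\pi_1(M,x_0)^{\mathcal{R}}$ abelianized.

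The main obstacle is not any single deep step but the bookkeeping needed to guarantee that every transfer isomorphism is simultaneously compatible with the Hurewicz map and with the $\pi_1$-action. Proposition \ref{lema:homohurdefsa} supplies this compatibility for the $\mathcal{R}_0$-to-$\mathcal{R}$ step, and one must check — exactly as in the definable case, where finiteness played no role — that the analogous square for the $\mathcal{R}_0$-to-classical comparison commutes and that the relevant isomorphisms respect $\beta$; the latter is the content of Lemma \ref{lema:empujacurvas} combined with the naturality of $\rho$. Once these naturality statements are in hand, the infinite triangulation causes no difficulty, precisely because the classical Hurewicz theorem imposes no finiteness condition on the CW complex.
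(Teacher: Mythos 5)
Your proposal is correct and follows essentially the same route as the paper: the paper's proof consists precisely in observing that the definable arguments of \cite[Thm.5.3]{07pBO} carry over once one substitutes the locally definable category, the isomorphisms $\rho$ and $\theta$ of Theorem \ref{teo:locprinci} and Notation \ref{nota:homolsaomin}, and the Triangulation Theorem for LD-spaces, noting that the finiteness of the simplicial complexes plays no role. Your expanded chain of transfers (triangulation with real algebraic vertices, passage from $\mathcal{R}$ to $\mathcal{R}_0$ via Proposition \ref{lema:homohurdefsa}, then to the classical setting, with Lemma \ref{lema:empujacurvas} handling the compatibility of the $\pi_1$-action) is exactly the bookkeeping the paper delegates to the definable case.
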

\begin{teo}[\textbf{Whitehead theorem}]\label{whiteheadlocal}Let $M$ and $N$ be two \ld s. Let $\psi:M\rightarrow N$ be an ld-map such that for some $x_0\in M$, $\psi_{*}:\pi_{n}(M,x_{0})^{\mathcal{R}}\rightarrow \pi_{n}(N,\psi(x_{0}))^{\mathcal{R}}$ is an isomorphism for all $n\geq 1$. Then, $\psi$ is an ld-homotopy equivalence.
\end{teo}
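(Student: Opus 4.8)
The plan is to follow the adaptation of the definable Whitehead theorem \cite[Thm.5.6]{07pBO} announced in the paragraph preceding the statement, replacing the finite triangulations used there by the locally finite generalized simplicial complexes furnished by Fact \ref{fact:triangulation}, and the definable homotopy extension lemma by its locally definable counterpart, Lemma \ref{lema:extlochom}. The argument is purely homotopical, so neither homology nor the Hurewicz theorem is needed. We may assume throughout that $M$ and $N$ are connected, treating each connected component separately.

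First I would reduce to a retraction problem via the mapping cylinder. Form $Z_\psi=(M\times I)\sqcup N/\!\sim$, with $(x,1)\sim\psi(x)$; one checks that $Z_\psi$ inherits a natural structure of a regular paracompact \ld, and that $\psi$ factors as $M\stackrel{i}{\hookrightarrow}Z_\psi\stackrel{r}{\to}N$, where $i$ is a closed admissible inclusion and $r$ is an ld-deformation retraction, hence an ld-homotopy equivalence. Since $r_*$ is an isomorphism on all homotopy groups, the hypothesis on $\psi_*$ forces $i_*:\pi_n(M,x_0)^{\mathcal{R}}\to\pi_n(Z_\psi,x_0)^{\mathcal{R}}$ to be an isomorphism for every $n\ge 1$. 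By the exactness property of the homotopy sequence of the pair $(Z_\psi,M)$ this yields $\pi_n(Z_\psi,M,x_0)^{\mathcal{R}}=0$ for all $n\ge 1$. It therefore suffices to prove that a closed admissible inclusion $i:M\hookrightarrow Z$ of \ld s with all relative homotopy groups vanishing exhibits $M$ as an ld-deformation retract of $Z$; composing such a retraction with $r$ then produces the desired ld-homotopy equivalence.

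Next I would triangulate the pair. By Fact \ref{fact:triangulation}, applied to the locally finite family $\{M\}$ in $Z$, there is a locally finite generalized simplicial complex $K$ and an ld-homeomorphism $|K|\cong Z$ carrying $M$ onto the realization of a subcomplex $L$. This equips $(Z,M)$ with the analogue of a relative CW structure, whose cells are the open simplices of $K$ not in $L$. The deformation retraction is then constructed by the classical compression argument over the relative skeleta $X_n:=|L|\cup|K^{(n)}|$: assuming a strong ld-deformation retraction of $X_{n-1}$ onto $M$ has been built, one extends it across the $n$-simplices of $K\setminus L$, using the vanishing of $\pi_n(Z,M,x_0)^{\mathcal{R}}$ to compress each such simplex rel its boundary (which already lies in $X_{n-1}$) into $M$, and propagating the compression by the homotopy extension lemma \ref{lema:extlochom}, applied with $Z$ in the role of the ambient space and $X_{n-1}$ in the role of the closed admissible subspace. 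Because $K$ is locally finite, on each chart only finitely many simplices intervene, so the simplexwise homotopies glue to an ld-map by the gluing principle, Fact \ref{fpegado}.

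Finally, the successive skeletal deformation retractions must be assembled into a single ld-homotopy $G:Z\times I\to Z$ with $G(\cdot,0)=\mathrm{id}$, $G(\cdot,1)$ mapping $Z$ into $M$, and $G(\cdot,t)|_M=\mathrm{id}_M$. I expect this assembly to be the main obstacle: since $K$ may be infinite-dimensional, one concatenates the homotopies over the time sequence $t_n=1-2^{-n}$ and verifies local definability exactly as in the final gluing step of the proof of Theorem \ref{teo:locprinci}, where the local finiteness of the covering guarantees that the infinite concatenation restricts to a definable map on each chart (again invoking Fact \ref{fpegado}). This realizes $M$ as an ld-deformation retract of $Z_\psi$, and composing with $r$ shows that $\psi$ is an ld-homotopy equivalence.
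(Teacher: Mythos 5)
Your plan collapses at its very first step, and the failure is not a matter of missing detail: the mapping cylinder $Z_\psi$ of an ld-map between LD-spaces does not, in general, carry any locally definable structure compatible with the inclusions your argument requires. Quotients of this kind are exactly the colimits the locally definable category lacks. To see this concretely, take $R$ non-archimedean, $M=\mathrm{Fin}(R)=\bigcup_{n\in\mathbb{N}}(-n,n)$ and $N$ a point, with $\psi$ the constant map; this $\psi$ satisfies the hypotheses of Theorem \ref{whiteheadlocal}, since all $\pi_n(\mathrm{Fin}(R))^{\mathcal{R}}$ vanish, and here $Z_\psi$ is the cone over $\mathrm{Fin}(R)$. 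Suppose $Z_\psi$ had an LD-structure in which $q(M\times[0,1))$ is an open admissible subspace ld-homeomorphic via $q$ to $M\times[0,1)$ with its product structure (this compatibility is indispensable for you: you need $M=M\times\{0\}$ to sit in $Z_\psi$ as a closed admissible subspace with its original structure in order to triangulate the pair $(Z_\psi,M)$ and to identify $\pi_n(Z_\psi,M,x_0)^{\mathcal{R}}$). Let $W$ be a chart containing the cone point $y$. Since $W$ is open in the quotient topology, for every $x\in M$ there is $s_x<1$ with $q(\{x\}\times(s_x,1])\subset W$. On the other hand $\{y\}$ is a definable, hence admissible, subspace, so $W\setminus\{y\}=W\cap q(M\times[0,1))$ is a definable subspace of $Z_\psi$ contained in $q(M\times[0,1))$, i.e.\ a definable subspace of $M\times[0,1)$. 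By Definition \ref{deff:definable} it must then be contained in a finite union of charts $(-j,j)\times[0,1)$; but its projection to $M$ is all of $\mathrm{Fin}(R)$, a contradiction. The same obstruction occurs whenever the preimage under $\psi$ of a small definable neighbourhood in $N$ is not contained in finitely many charts of $M$, so it cannot be ruled out by the hypothesis that $\psi_*$ is an isomorphism on all homotopy groups.

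The later steps of your proposal (triangulating the pair by Fact \ref{fact:triangulation}, compressing simplex by simplex using the vanishing relative groups, gluing by Fact \ref{fpegado} and Lemma \ref{lema:extlochom}, and concatenating over $t_n=1-2^{-n}$) are reasonable in spirit, but they are also not what the paper does, and for good reason: the paper never has to manufacture a new space. Its proof of Theorem \ref{whiteheadlocal} is the adaptation of \cite[Thm.5.6]{07pBO} described just before the statement, which is a transfer argument: by Fact \ref{fact:triangulation} and Remark \ref{rmk:paratriang} one may assume $M$ and $N$ are realizations of locally finite complexes with real algebraic vertices, hence locally semialgebraic and defined without parameters; by the corollary following Corollary \ref{cor:ominreallocall}, $\psi$ is ld-homotopic to a locally semialgebraic map defined without parameters; the isomorphisms $\rho$ of Theorem \ref{teo:locprinci} and Corollaries \ref{rhoesismorfis} and \ref{cor:homtopiadefigualtopo} then translate the hypothesis into the statement that the realization $\psi(\mathbb{R})$ induces isomorphisms on all classical homotopy groups; the classical Whitehead theorem applies to the CW complexes $|K|(\mathbb{R})$ and $|L|(\mathbb{R})$; and the homotopy inverse and the homotopies are carried back through the same bijections. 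If you insist on a direct cell-by-cell proof inside the locally definable category, you must avoid the mapping cylinder altogether, for instance by proving a HELP-type extension/lifting statement for the triangulated map $\psi$ itself; as written, your reduction to a retraction problem is based on a construction that does not exist.
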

\begin{cor}\label{cor:contraclocal}Let $M$ be an \ld \ and let $x_0\in M$. If $\pi_n(M,x_0)^{\mathcal{R}}=0$ for all $n\geq 0$ then $M$ is ld-contractible.
\end{cor}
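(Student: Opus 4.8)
I want to show that an \ld \ $M$ with $\pi_n(M,x_0)^{\mathcal{R}}=0$ for all $n\geq 0$ is ld-contractible, i.e.\ ld-homotopy equivalent to a point (equivalently, the identity map is ld-homotopic to a constant map).

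**The plan.** The natural strategy is to derive this as an immediate consequence of the Whitehead theorem (Theorem \ref{whiteheadlocal}) applied to a constant map. First I would fix a point $x_0\in M$ and consider the one-point \ld \ $P=\{x_0\}$, which is trivially regular and paracompact. Let $\psi:P\rightarrow M$ be the inclusion (equivalently, think of the constant map $M\rightarrow P$, but for Whitehead I want the map whose induced homomorphisms I can control). The homotopy groups of a point vanish in all degrees, $\pi_n(P,x_0)^{\mathcal{R}}=0$ for all $n\geq 1$, and by hypothesis $\pi_n(M,x_0)^{\mathcal{R}}=0$ for all $n\geq 1$ as well. Hence $\psi_*:\pi_n(P,x_0)^{\mathcal{R}}\rightarrow \pi_n(M,x_0)^{\mathcal{R}}$ is the zero homomorphism between two trivial groups, which is trivially an isomorphism for every $n\geq 1$.

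**Applying Whitehead.** With the hypotheses of Theorem \ref{whiteheadlocal} verified for $\psi:P\rightarrow M$, I conclude that $\psi$ is an ld-homotopy equivalence. This says precisely that $M$ has the ld-homotopy type of a point, which is the definition of ld-contractibility. I should double-check one point of bookkeeping: Whitehead as stated requires $\psi_*$ to be an isomorphism for all $n\geq 1$, and it is the map into $M$ that must induce these isomorphisms; since both source and target homotopy groups vanish, the direction of the arrow is immaterial and the verification is automatic. The role of the hypothesis $\pi_0(M,x_0)^{\mathcal{R}}=0$ (i.e.\ $M$ is path connected, so that basepoints are irrelevant) is to guarantee that the single basepoint $x_0$ detects the homotopy type globally, matching the connectedness that is implicitly needed to run Whitehead.

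**The only real subtlety.** There is essentially no hard step here: the whole content is packaged into Theorem \ref{whiteheadlocal}. The one thing I would be careful about is the $\pi_0$ condition. The vanishing of $\pi_0(M,x_0)^{\mathcal{R}}$ must be read as the statement that $M$ is (path) connected, so that the inclusion of the basepoint induces a bijection on connected components; without this, a constant map could not be a homotopy equivalence (a disconnected space is never contractible). Since by Fact \ref{fact:conexoporarcos} the connected and path-connected components of an \LD \ coincide and are admissible, the hypothesis $\pi_0(M,x_0)^{\mathcal{R}}=0$ indeed gives that $M$ is connected, and the argument goes through. Thus the proof is simply: verify the (trivial) isomorphism hypothesis and invoke Whitehead.
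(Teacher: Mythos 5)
Your proof is correct and is exactly the argument the paper intends: the corollary appears immediately after the Whitehead theorem with no separate proof, the point being that the inclusion of the one-point \ld \ $\{x_0\}$ (or equivalently the constant map) trivially satisfies the isomorphism hypothesis of Theorem \ref{whiteheadlocal} once the hypothesis $\pi_0(M,x_0)^{\mathcal{R}}=0$ is read, via Fact \ref{fact:conexoporarcos}, as connectedness of $M$. Your bookkeeping about the $\pi_0$ condition and about the one-point space being regular and paracompact is precisely the right thing to check.
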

\section{Appendix}
\subsection{The Triangulation Theorem}\label{sappepr}
Below we sketch the proof of the Triangulation theorem for \ld s (see Fact \ref{fact:triangulation}). The main step is Fact \ref{fembed}, whose proof will be given at the end. The advantage of working with a partially complete \ld s is that given a triangulation of a closed definable subspace of itself we know that the corresponding simplicial complex must be closed. This allow us to prove the following ``glue'' principle of triangulations for partially complete \ld s. Firstly, recall that given two ld-triangulations $(K,\phi)$ and $(L,\psi)$ of a \ld \ $M$, we say that $(K,\phi)$ \textit{\textbf{refines}} $(L,\psi)$ if for every $\tau\in L$ there is $\sigma\in K$ such that $\phi(\sigma)\subset \psi(\tau)$. We say that $(K,\phi)$ is \textit{\textbf{equivalent}} to $(L,\psi)$ if each ld-triangulation is a refinement of the other.
\begin{fact}\emph{\cite[Thm. II.4.1]{85DK}}\label{fpegtrian} Let $M$ be a partially complete \ld \ and $\{C_j:j\in J\}$ a locally finite covering of $M$ by closed definable subspaces. Let $(K_j,\phi_j)$ be a triangulation of $C_j$ for each $j\in J$. Moreover, assume that for every $i,j\in J$ with $C_i\cap C_j\neq \emptyset$, $\phi_i$ and $\phi_j$ are equivalent on $C_i\cap C_j$. Then, there is a ld-triangulation $(K,\phi)$ of $M$ such that $\phi$ is equivalent to $\phi_j$ on $C_j$ for each $j\in J$.
\end{fact}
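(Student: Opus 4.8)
The plan is to reduce to a countable exhaustion of $M$ by closed definable subspaces and then to build the global triangulation inductively, at each stage extending the triangulation already produced on the part exhausted so far. Since the connected components of $M$ are clopen admissible subspaces (Fact \ref{fact:conexoporarcos}) and the $C_j$ restrict to each of them, I may assume $M$ is connected; then by Fact \ref{fact:claudef}.(2) it is Lindel\"of, so it has a countable admissible covering by open definable subspaces, each of which, being a definable subspace, meets only finitely many $C_j$. Hence only countably many $C_j$ are nonempty, and I reindex the covering as $\{C_n:n\in\mathbb{N}\}$. Set $D_n:=C_0\cup\cdots\cup C_n$, a closed definable subspace of $M$; the $D_n$ increase with union $M$, and since $M$ is partially complete each $D_n$ is definably compact, hence affine (Remark \ref{obs:regularity}) and triangulable by a finite simplicial complex whose realization is closed.

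Next I would construct, by induction on $n$, finite complexes $K_n$ with closed realizations, together with triangulations $\Phi_n\colon|K_n|\to D_n$, such that (a) $K_{n-1}$ is a subcomplex of $K_n$ and $\Phi_n$ restricts on $D_{n-1}$ to $\Phi_{n-1}$, and (b) $\Phi_n$ is equivalent to $\phi_j$ on $C_j$ for every $j\le n$. The inductive step is an application of the definable triangulation theorem in a relative form (see \cite{98Dr} and the Normal Triangulation Theorem \cite{07pB}): one triangulates the definably compact set $D_n$ so as to keep the triangulation $\Phi_{n-1}$ of the closed subspace $D_{n-1}$ literally fixed, while simultaneously partitioning the finite family $\{C_0,\dots,C_n\}$ and producing, on $C_n$, a triangulation equivalent to the given $\phi_n$. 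Here the compatibility hypothesis -- that $\phi_i$ and $\phi_j$ are equivalent on each overlap $C_i\cap C_j$ -- is exactly what keeps the finitely many requirements of being equivalent to $\phi_j$ on $C_j$ from conflicting as $j$ ranges over $0,\dots,n$, so that a single triangulation of $D_n$ meeting all of them exists.

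Finally I would set $K:=\bigcup_n K_n$ and $\phi:=\bigcup_n\Phi_n$. Because the $K_n$ are nested closed subcomplexes, $K$ is a generalized simplicial complex and $\phi\colon|K|\to M$ is a well-defined bijection; it is an ld-homeomorphism since $\phi|_{D_n}=\Phi_n$ is definable, the increasing family $\{D_n\}$ is an admissible covering of $M$ by closed definable subspaces, and Fact \ref{fpegado} applies to both $\phi$ and $\phi^{-1}$. Partial completeness is what guarantees that each $K_n$ is closed, so that the inclusions are genuine subcomplex inclusions and no simplex is truncated at a later stage. Local finiteness of $K$ -- that the star of each simplex is a finite subcomplex -- follows from local finiteness of $\{C_n\}$: any simplex $\sigma$ lies in some $D_N$, a neighbourhood of its barycentre meets only finitely many $C_n$, so after finitely many stages no new simplices attach to $\sigma$ and its star stabilizes. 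The main obstacle is precisely the inductive step: it requires a relative triangulation theorem for definable sets that keeps a triangulation of a closed subspace fixed while partitioning a finite family, and one must verify that the overlap-equivalence hypothesis makes the finitely many compatibility constraints jointly satisfiable up to equivalence. Everything else is bookkeeping with the exhaustion and the gluing principle of Fact \ref{fpegado}.
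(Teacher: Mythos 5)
Your reduction to the connected (hence Lindel\"of) case and the assembly/local-finiteness bookkeeping at the end are fine, but the inductive step contains a genuine gap: it invokes a relative triangulation theorem producing a triangulation of $D_n$ that keeps the triangulation $\Phi_{n-1}$ of the closed definable subspace $D_{n-1}$ \emph{literally} fixed, and no such theorem is available from the sources you cite. The Triangulation Theorem in \cite{98Dr} only yields a triangulation of $D_n$ \emph{partitioning} finitely many definable subsets; applying it to the sets $\Phi_{n-1}(\sigma)$, $\sigma\in K_{n-1}$, gives a triangulation whose restriction to $D_{n-1}$ merely \emph{refines} $\Phi_{n-1}$, and upgrading a refinement to an actual extension would require a definable homeomorphism-extension (or isotopy-extension) result that is not in the o-minimal literature. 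The Normal Triangulation Theorem of \cite{07pB} also does not do this: it produces refinements compatible with a given complex structure, not extensions of a given triangulation of a subspace. This missing tool is precisely the obstruction the present paper is organized around: it is why the statement asks only for $\phi$ to be \emph{equivalent} to $\phi_j$ on $C_j$, and why the paper's proof of the global Triangulation Theorem (Fact \ref{fact:triangulation}) arranges only refinement/equivalence on overlaps (via the affine triangulation theorem and \cite[Lem.I.4.3]{85DK}) and then appeals to Fact \ref{fpegtrian} to assemble. Note, moreover, that if you weaken your condition (a) to ``$\Phi_n$ equivalent to $\Phi_{n-1}$ on $D_{n-1}$'' --- which \emph{is} achievable --- then $K_{n-1}$ is no longer a subcomplex of $K_n$, the union $\bigcup_n K_n$ makes no sense, and gluing the stage-wise triangulations along overlaps is exactly the statement being proved, so the argument becomes circular.

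The paper's proof requires no new triangulation at all: it glues the \emph{given} data combinatorially. Since $M$ is partially complete and each $C_j$ is closed, each $K_j$ is a closed complex; one forms the quotient $E$ of the disjoint union of the vertex sets of the $K_j$ by identifying vertices with the same image in $M$ (consistent because equivalence of $\phi_i$ and $\phi_j$ on $C_i\cap C_j$ forces the two induced simplicial decompositions of $C_i\cap C_j$ to coincide), takes the abstract complex whose simplices come from the simplices of the $K_j$, realizes it --- locally finite because $\{C_j:j\in J\}$ is locally finite --- and defines $\phi$ piecewise from the $\phi_j$, finally checking it is an ld-homeomorphism. This works for an arbitrary index set $J$, with no countability reduction and no induction. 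So either adopt this direct gluing, or be aware that your inductive scheme requires first proving an o-minimal triangulation-extension theorem, which is a substantially harder result and not a citation.
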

\begin{proof} Since $M$ is partially complete and each $C_j$ is closed, we have that $K_j$ is a closed simplicial complex for each $j\in J$. Denote by $E$ the quotient of the disjoint union of the sets $Vert(K_j)$ of vertices of each $K_j$ by the equivalence relation such that $v\in Vert(K_i)$ and $w\in Vert(K_j)$ are related if and only if $\phi_i(v)=\phi_j(w)$. Clearly, for each $j\in J$ we have an injective map $I_j:Vert(K_j)\rightarrow E$. Let $S:=\{ \{I_j(v_0),\ldots,I_j(v_n)\}:(v_0,\ldots,v_n)\in K_j,j\in J\}$. It is easy to check that $(E,S)$ is an abstract simplicial complex. In fact, since the covering $\{C_j:j\in J\}$ is locally finite, the complex $(E,S)$ is locally finite. Consider a realization $|K|$ of $(E,S)$ and for each $j\in J$ denote by $|I_j|:|K_j|\rightarrow |K|$ the simplicial map generated by $I_j$. Finally, consider the map $\phi:|K|\rightarrow M$ such that $\phi|_{Y_j}=\phi_j\circ |I_j|^{-1}$. Since the triangulations $(K_j,\phi_j)$ are equivalent on the intersections, $\phi$ is a well-defined ld-map. It is not difficult to prove that $(K,\phi)$ is the required ld-triangulation of $M$.
\end{proof}
\begin{proof}[Proof of Fact \ref{fact:triangulation}]  By Fact \ref{fembed} we can assume that $M$ is partially complete. We can also assume that $M$ is connected. Therefore, since $M$ is paracompact, $M$ is Lindel\"of (see Fact \ref{fact:claudef}). Hence, there is a covering $\{C_n:n\in \mathbb{N}\}$ of $M$ by closed definable subspaces such that $C_n\cap C_m=\emptyset$ if $|n-m|>1$. Indeed, it suffices to use the shrinking property of coverings (see \cite[Thm. I.4.11]{85DK}) with the locally finite covering constructed in the proof of Fact \ref{fact:claudef}.(3). Note that for each $n\in \mathbb{N}$ there is only a finite number of $j\in J$ such that $A_j\cap C_n\neq \emptyset$. Therefore, by the (affine) Triangulation theorem and applying an iteration process, there are triangulations $(K_n,\phi_n)$ of $C_n$ partitioning $C_n\cap M_{n+1}$, $\{C_n\cap A_j \}_{j\in J}$ and $\{\phi_{n-1}(\sigma)\cap C_n\}_{\sigma\in K_{n-1}}$. Note that  $(K_n,\phi_n)$ refines $(K_{n-1},\phi_{n-1})$ on $C_n\cap C_{n-1}$. Now, since $C_n\cap C_{n-1}$ and $C_n\cap C_{n+1}$ are disjoint, there is a triangulation $(L_n,\psi_n)$ of $C_n$ refining $(K_n,\phi_n)$ and equivalent to $(K_n,\phi_n)$ and $(K_{n+1},\phi_{n+1})$ on $C_n\cap C_{n-1}$ and $C_n\cap C_{n+1}$ respectively (see \cite[Lem.I.4.3]{85DK}). Finally, by Fact \ref{fpegtrian}, there is an ld-triangulation $(L,\psi)$ of $M$ partitioning $\{A_j:j\in J\}$. 
\end{proof}
Now, we prove Fact \ref{fembed}. The following result states a glueing principle of definable spaces with closed intersections.
\begin{fact} \emph{\cite[Thm. II.1.3]{85DK}}\label{pegadoespacios} Let $M$ be a set and $\{M_i:i\in I\}$ a family of subsets of $M$. Assume that for each $i\in I$, $M_i$ has an affine definable space structure satisfying that \\
\emph{(i)} $M_i\cap M_j$ is a closed definable subspace of both $M_i$ and $M_j$ for every $i,j\in I$ and the structure that $M_i\cap M_j$ inherits from $M_i$ and $M_j$ are equal and,\\
\emph{(ii)} the family $\{M_i\cap M_j:j\in I\}$ is finite for every $i\in I$.\\
Then, there is a (unique) \ld \ structure in $M$ such that\\
\emph{(a)} $M_i$ is a closed definable subspace of $M$ for every $i\in I$;\\
\emph{(b)} the structure that $M_i$ inherits from $M$ equals its affine structure and,\\
\emph{(c)} the family $\{M_i:i\in I\}$ is locally finite. 
\end{fact}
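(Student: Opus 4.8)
The plan is to give $M$ the final topology, to build an atlas of \emph{open} affine charts by ``opening up'' the closed gluing data, and then to read off (a)--(c) together with regularity and paracompactness. First I would declare $U\subseteq M$ open if and only if $U\cap M_i$ is open in $M_i$ for every $i\in I$. By hypothesis (i) each $M_i\cap M_j$ is closed in $M_j$, so each $M_i$ is closed in $M$; by (ii) the family $\{M_i\}$ is locally finite for this topology, and a routine check (gluing Hausdorff spaces along closed subspaces, kept under control by (ii)) shows that $M$ is Hausdorff. These observations already give the closedness in (a) and the whole of (c), and -- once (b) is known -- they pin down the topology, which is what will yield uniqueness.

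The heart of the argument is the construction of open charts. For each $i$ put $F_i:=\{j\in I:M_i\cap M_j\neq\emptyset\}$, which is finite by (ii), and set $P_i:=\bigcup_{j\in F_i}M_j$. As a finite union of affine definable spaces with closed pairwise intersections, $P_i$ carries a definable space structure (this is the finite, and easier, instance of the present statement); being Hausdorff and regular it is affine by the o-minimal Robson theorem (see Remark \ref{obs:regularity}), so fix $P_i\subseteq R^{N_i}$. Now I separate $M_i$ from the ``outside'': let $C_i:=P_i\cap\bigcup_{k\notin F_i}M_k$. For each $j\in F_i$ we have $C_i\cap M_j=\bigcup_{k\notin F_i}(M_j\cap M_k)$, a finite union (by (ii) applied to $j$) of closed sets, so $C_i$ is closed in $P_i$; and since $M_i\cap M_k=\emptyset$ for $k\notin F_i$, the closed set $C_i$ is disjoint from $M_i$. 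By (definable) normality of the affine space $P_i$ I choose an open definable $W_i$ with $M_i\subseteq W_i\subseteq P_i$ and $W_i\cap C_i=\emptyset$. Each $W_i$ is then open in $M$: for $k\in F_i$, $W_i\cap M_k$ is open in $M_k$ because $W_i$ is open in $P_i$; for $k\notin F_i$, $W_i\cap M_k\subseteq W_i\cap C_i=\emptyset$. Since $M_i\subseteq W_i$ the $W_i$ cover $M$, each is affine (an open definable subset of the affine $P_i$), and the inclusions into $R^{N_i}$ serve as charts.

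It remains to verify the atlas axioms and the conclusions. The overlaps $W_i\cap W_j$ are open in $M$, they live inside the finite gluing $P_i\cap P_j$, and the transition maps -- restrictions of the affine embeddings -- are therefore definable, so $\{(W_i,\cdot)\}_{i\in I}$ is an atlas and $M$ is an \LD. A short computation with (ii) shows that $W_i\cap W_j\neq\emptyset$ forces $j\in\bigcup_{l\in F_i}F_l$ (if $W_j$ met some $M_l$ with $l\in F_i$ then necessarily $l\in F_j$), a finite set; hence $\{W_i\}$ is a locally finite covering of $M$ by open definable subspaces and $M$ is paracompact. Regularity follows since the affine charts are locally definably compact and a definably compact definable subspace is closed in $M$, giving each point a fundamental system of closed definable neighbourhoods; thus $M$ is an \ld. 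Finally, $M_i\subseteq W_i$ with $\phi_i(M_i)$ definable shows $M_i$ is a (closed) definable subspace, so (a) holds; the chart on $W_i$ restricts to the given affine structure on $M_i$, giving (b); and $W_k\cap M_i\neq\emptyset\Rightarrow i\in F_k$ gives (c). Uniqueness holds because (a)--(c) force the topology (the $M_i$ closed and locally finite) and determine the definable subspaces.

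The step I expect to be the main obstacle is exactly this construction of the open charts $W_i$, i.e. converting the closed gluing into an atlas with open overlaps. The two delicate points are that $C_i$ is genuinely closed, which uses local finiteness (ii) applied to each neighbour $j\in F_i$ and not merely to $i$, and that $P_i$ is affine, so that (definable) normality is available to separate $M_i$ from $C_i$; everything else is bookkeeping governed by (ii).
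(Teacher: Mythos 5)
Your overall route is genuinely different from the paper's, and its core is sound. The paper's proof consists only of the two-piece case, realized as an affine space by an explicit Tietze-extension embedding into $R^n\times R^n\times R$ (the general case is deferred to Delfs--Knebusch); you instead take the finite case as a black box and supply the passage from finite to locally finite, fattening each closed piece $M_i$ to an open chart $W_i$ inside the affine finite union $P_i$. Be aware that your black box is precisely where the paper's work lives: the finite case is obtained from the paper's two-piece construction by induction (checking that $(M_1\cup\cdots\cup M_{n-1})\cap M_n$ again satisfies hypotheses (i) and (ii)), and it is this construction that makes $P_i$ affine; so your argument complements the paper's rather than replaces it. Two remarks on your main step: definable normality is not needed, since $W_i:=P_i\setminus C_i$ is already an open definable subset of $P_i$ containing $M_i$ and missing $C_i$; and the atlas verification silently uses two facts that deserve explicit mention, namely that a finite closed cover determines the topology (this is what makes $W_i\cap W_j$ open in $W_i$ and identifies your final topology with the chart topology), and that the two definable structures induced on $P_i\cap P_j=\bigcup_{k\in F_i,\,l\in F_j}(M_k\cap M_l)$ from $P_i$ and from $P_j$ coincide.

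There are, however, two genuine flaws in the point-set part. First, Hausdorffness: ``gluing Hausdorff spaces along closed subspaces'' is not a valid principle (adjunction spaces of Hausdorff spaces along closed subspaces need not be Hausdorff; one needs normality), so your ``routine check'' hides real content. It can be repaired with your own tools: given $x\neq y$, pick $i,j$ with $x\in M_i$, $y\in M_j$; both points lie in the affine (hence Hausdorff) space $P:=\bigcup_{k\in F_i\cup F_j}M_k$, your charts $W_i$ and $W_j$ are open in $P$, and intersecting disjoint separating open subsets of $P$ with $W_i$, resp.\ $W_j$, yields disjoint open subsets of $M$ separating $x$ and $y$. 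Second, and more seriously, your regularity argument is wrong: an affine definable space need \emph{not} be locally definably compact. For example $X=\{(x,y)\in R^2:y>0\}\cup\{(0,0)\}$ is affine, but the origin has no definably compact neighbourhood in $X$, since such a neighbourhood would be closed and bounded in $R^2$ yet its closure must meet the segment $\{y=0\}$ outside $X$. Regularity is true but needs a different argument, e.g.: for $x\in M$, local finiteness of the closed family $\{M_k\}$ makes $V_x:=M\setminus\bigcup\{M_k:x\notin M_k\}$ open in $M$, and $V_x$ is contained in $D_x:=\bigcup\{M_k:x\in M_k\}$, a finite union which is affine (hence regular) and closed in $M$ by (a); closed definable neighbourhoods of $x$ inside $D_x\cap U$ are then closed in $M$, and intersecting the witnessing open sets with $V_x$ shows they are neighbourhoods of $x$ in $M$. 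With these two repairs your proof is complete.
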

\begin{proof}We just give the ideas of the case $I=\{1,2\}$, the general proof can be found in \cite{85DK}. Denote by $\psi_i:M_i\rightarrow E_i\subset R^n$ the chart of $M_i$ for $i=1,2$. Let $A=M_1\cap M_2$. By Tietze extension theorem (see  \cite[Ch.8,Cor.3.10]{98Dr}) there is a definable map $\chi_1:M_1\rightarrow R^n$ such that $\chi_1|_A=\psi_2$. Similarly, there is a definable map $\chi_2:M_2\rightarrow R^n$ such that $\chi_2|_A=\psi_1$. Consider the map $\phi_1:M\rightarrow R^n$ such that $\phi_1|_{M_1}=\psi_1$ and $\phi_1|_{M_2}=\chi_2$. Consider also the map $\phi_2:M\rightarrow R^n$ such that $\phi_2|_{M_2}=\psi_2$ and $\phi_2|_{M_1}=\chi_1$. On the other hand, by \cite[Ch.6, Lemma 3.8]{98Dr}, there are definable functions $h_1:M_1\rightarrow [-1,0]$ and $h_2:M_2\rightarrow [0,1]$ such that $h_1^{-1}(0)=h_2^{-1}(0)=A$. Consider the function $h:M\rightarrow [-1,1]$ such that $h|_{M_1}=h_1$ and $h|_{M_2}=h_2$. Note that $h^{-1}(0)=A$, $h^{-1}([-1,0])=M_1$ and $h^{-1}([0,1])=M_2$. Finally, consider the map $f:M\rightarrow R^n\times R^n \times R:x\mapsto (\phi_1(x),\phi_2(x),h(x))$. Note that the function $f$ is injective and the map $\psi_i^{-1}\circ f:E_i\rightarrow R^{2n+1}$ is definable for $i=1,2$. Hence $N_1:=f(M_1)$ and $N_2:=f(M_2)$ are definable. In particular, $E:=f(M)=N_1\cup N_2$ is definable. Using the bijection $f:M\rightarrow E$, we define a structure of affine definable space in $M$. Next, we check that properties (a) and (b) hold. Firstly, note that $N_1=\{(x_1,\ldots,x_{2n+1})\in E:x_{2n+1}\leq 0\}$ and $N_2=\{(x_1,\ldots,x_{2n+1})\in E:x_{2n+1}\geq 0\}$ and therefore  $N_1$ and $N_2$ are closed subsets of $E$. Finally, $f|_{M_i}:M_i\rightarrow N_i$ is an embedding for $i=1,2$. Indeed, it suffices to observe that $f^{-1}|_{N_i}=\psi_1^{-1}\circ pr$ is definable, where $pr$ denotes the projection over the first $n$ coordinates. In a similar way, we prove that $f^{-1}|_{N_2}$ is also definable.
\end{proof}
\begin{proof}[Proof of Fact \ref{fembed}] Let $\{M_i:i\in I\}$ be a locally finite covering of $M$ by open definable subspaces. For each $i\in I$ there is an sphere $S_i:=S^{n_i}$, $n_i\in \mathbb{N}$, and an ld-map $g_i:M\rightarrow S_i$ with $g_i^{-1}(p_i)=M\setminus M_i$, $p_i$ the north pole of $S^{n_i}$, and such that $g_i|_{M_i}$ is an embedding (see \cite[Lem.,II.2.2]{85DK}). On the other hand, we define the finite subsets of indexes $\Gamma_1(i):=\{j\in I:M_i\cap M_j\neq \emptyset \}$, $\Gamma_2(i):=\bigcup_{j\in \Gamma_1(i)}\Gamma_1(j)$ and $\Gamma^{*}_{2}(i)=\Gamma_2(i)\setminus \{i\}$ for each $i\in I$. Consider the set $Z:=\prod_{i\in I}(S_i\times [0,1])$ and the family of subsets $N_i:=\prod_{j\in I}N_{i,j}\subset Z$, where $N_{i,i}:=S_i\times \{1\}$, $N_{i,j}:=(p_j,0)$ if $j\in I\setminus \Gamma_2(i)$ and $N_{i,j}:=S_{j}\times [0,1]$ if $j\in \Gamma^*_2(i)$. We regard each $N_i$ in the obvious way as a definably compact definable space isomorphic to the product $(S_i\times \{1\}) \times \prod_{j\in \Gamma^*_2(i)}(S_j\times[0,1])$. Now, by Theorem \ref{pegadoespacios}, we have a partially complete \LD \ structure on  $$N:=\bigcup_{i\in I}N_i$$ such that each $N_i$ is closed in $N$ (with the inherited structure of definable space from $N$ equal to the original one) and such that $\{N_i:i\in I\}$ is a locally finite covering of $N$. Indeed, it suffices to check that given $i\in I$, there are only a finite number of $j\in I$ with $N_i\cap N_j\neq \emptyset$ and, in this case, $N_i\cap N_j$ is closed in both $N_i$ and $N_j$ with the inherited definable space structures equal. But clearly, $N_i\cap N_j\neq\emptyset$ if and only if $i\in \Gamma_2(j)$ and in this case  $N_i\cap N_j=\prod_{k\in I}N_{i,j,k}$, where $N_{i,j,k}:=S_k\times \{ 1\}$ if $k=i$ or $k=j$, $N_{i,j,k}:=S_{k}\times [0,1]$ if $k\in \Gamma^*_2(i)\cap \Gamma^*_2(j)$ and $N_{i,j,k}:=(p_k,0)$ in other case. In fact, $N$ is partially complete because given a closed definable subspace $X$ of $N$ we have that $X=(X\cap N_{i_1})\cup \cdots \cup (X\cap N_{i_m})$ for some $i_1,\ldots,i_m\in I$ and each $X\cap N_{i_1}$,\ldots,$X\cap N_{i_m}$ is a definably compact definable space (since it is a closed subset of a definably compact one).

Now, we construct the embedding of $M$ in $N$. Denote by $W_i=\bigcup_{j\in \Gamma_1(i)}M_j$, which is an open definable subspace of $M$. Note that $\overline{M_i}\subset W_i$. By \cite[Th. I.4.15]{85DK}, for each $i\in I$ there is an ld-function $f_i:M\rightarrow [0,1]$ such that $f_i^{-1}(1)=\overline{M_i}$ and $f_i^{-1}(0)=M\setminus W_i$. Finally, consider the map $$\psi:M\rightarrow N:x\mapsto (g_i(x),f_i(x))_{i\in I}.$$
Note that $\psi$ is a well-defined injective ld-map. A straightforward adaptation of \cite[Thm. II.2.1]{85DK} shows that $\psi(M)$ is an admissible subspace of $N$ and that $\psi:M\rightarrow \psi(M)$ is an ld-homeomorphism.\end{proof}
\subsection{Covering maps for \ld s}\label{sappcov}
\begin{proof}[Proof of Fact \ref{fact:recubrimientos}] Consider the collection $\mathcal{P}$ of all ld-curves $\alpha:I\rightarrow B$ such that $\alpha(0)=b_0$. Let $\thicksim$ be the equivalence relation on $\mathcal{P}$ such that $\alpha \thicksim \beta$ if and only if $\alpha(1)=\beta(1)$ and $[\alpha * \beta^{-1}]\in L$, where $*$ denotes the usual concatenation of curves. We will denote by $\alpha^{\#}$ the class of $\alpha\in \mathcal{P}$. Let $E=\mathcal{P}/ \sim$ and $p:E\rightarrow B:\alpha^{\#} \mapsto \alpha(1)$. Now, we divide the proof in several steps.\\
\hspace*{0.5cm}\texttt{(1)} $E$ is an \LD: Firstly, note that every definable subspace of $B$ has a finite covering by open connected definable subspaces which are simply connected (because of Remark \ref{obs:regularity}, the Triangulation theorem and the fact that the star of a vertex is definably simply connected). Therefore, since $B$ is an \ld, there exist a locally finite covering $\{U_j:j\in J\}$ of $B$ such that each $U_j$ is a connected and simply connected (i.e, $\pi_1(U_j)^{\mathcal{R}}=0$) definable open subspace of $B$. Now, for each $j\in J$ and $\alpha\in \mathcal{P}$ with $\alpha(1)\in U_j$, we define $W_{j,\alpha}:=\{(\alpha * \delta)^{\#}:\delta:I\rightarrow U_j \textrm{ ld-map}, \delta(0)=\alpha(1)\}$. Henceforth, when we write $W_{j,\alpha}$, we assume that $\alpha(1)\in U_j$. Consider the map $\phi_{j,\alpha}:W_{j,\alpha}\rightarrow U_j: (\alpha * \delta)^{\#} \mapsto \delta(1)$ for each $j\in J$ and $\alpha\in \mathcal{P}$. Since $U_j$ is connected and simply connected, $\phi_{j,\alpha}$ is a well-defined bijection for every $j\in J$ and $\alpha\in \mathcal{P}$. The family $(W_{j,\alpha},\phi_{j,\alpha})_{j\in J,\alpha\in \mathcal{P}}$ is an atlas of $E$. Indeed, fix $i,j\in J$ and $\alpha,\beta\in \mathcal{P}$ with $W_{i,\alpha}\cap W_{j,\beta}\neq \emptyset$. Then, $\phi_{i,\alpha}(W_{i,\alpha}\cap W_{j,\beta})$ is the union of some connected components of $U_i\cap U_j$. Moreover, $\phi_{j,\beta}(W_{i,\alpha}\cap W_{j,\beta})$ is the union of exactly the same connected components of $U_i\cap U_j$, i.e., $\phi_{j,\beta}(W_{i,\alpha}\cap W_{j,\beta})=\phi_{i,\alpha}(W_{i,\alpha}\cap W_{j,\beta})$. This shows that both $\phi_{i,\alpha}(W_{i,\alpha}\cap W_{j,\beta})$ and $\phi_{j,\beta}(W_{i,\alpha}\cap W_{j,\beta})$ are open in $U_i$ and $U_j$ respectively and that each change of charts is the identity, hence definable.\\
\hspace*{0.5cm}\texttt{(2)} The map $p$ is an ld-map: since $p|_{W_{j,\alpha}}:W_{j,\alpha}\rightarrow U_j\subset B$ is a definable map of definable spaces, for all $W_{j,\alpha}$.\\
\hspace*{0.5cm}\texttt{(3)} $E$ is paracompact: Fix  $i\in J$ and $\alpha\in \mathcal{P}$. We prove that $\#\{W_{j,\beta}: W_{i,\alpha}\cap W_{j,\beta}\neq\emptyset, j\in J,\beta\in \mathcal{P}\}$ is finite. Firstly, note that if $W_{i,\alpha}\cap W_{j,\beta}\neq \emptyset$ then $U_i\cap U_j \neq \emptyset$. Therefore, since the covering $\{U_j:j\in J\}$ is locally finite, it suffices to prove that the family $\{W_{j,\beta}:W_{i,\alpha}\cap W_{j,\beta}\neq\emptyset, \beta\in \mathcal{P}\}$ is finite for a fixed $j\in J$. Indeed, we will show that given $W_{j,\beta_1}$ and $W_{j,\beta_2}$ with $W_{i,\alpha}\cap W_{j,\beta_1}\neq\emptyset$ and $W_{i,\alpha}\cap W_{j,\beta_2}\neq\emptyset$, if $p(W_{i,\alpha}\cap W_{j,\beta_1})\cap p(W_{i,\alpha}\cap W_{j,\beta_2}) \neq\emptyset$ then $W_{j,\beta_1}=W_{j,\beta_2}$. The latter is enough because for each $\beta\in \mathcal{P}$, $p(W_{i,\alpha}\cap W_{j,\beta})$ ($=\phi_{i,\alpha}(W_{i,\alpha}\cap W_{j,\beta})$) is the union of some connected components of $U_i\cap U_j$, which has only a finite number of them. Firstly,  since $U_j$ is connected, it is easy to prove that if $\gamma^{\#}\in W_{j,\beta_1}$ then $W_{j,\gamma}=W_{j,\beta_1}$. The same holds for $W_{j,\beta_2}$. So, if $W_{j,\beta_1}\cap W_{j,\beta_2}\neq \emptyset$ then $W_{j,\beta_1}= W_{j,\beta_2}$. On the other hand, since $p|_{W_{i,\alpha}}=\phi_{i,\alpha}$ and $\phi_{i,\alpha}$ is a bijection, from $p(W_{i,\alpha}\cap W_{j,\beta_1})\cap p(W_{i,\alpha}\cap W_{j,\beta_2}) \neq\emptyset$ we deduce that $\emptyset \neq W_{i,\alpha}\cap W_{j,\beta_1}\cap W_{j,\beta_2}\subset W_{j,\beta_1}\cap W_{j,\beta_2}$ and hence $W_{j,\beta_1}= W_{j,\beta_2}$.\\
\hspace*{0.5cm}\texttt{(4)} The ld-map $p:E\rightarrow B$ is a covering map: By to proof of \texttt{(3)}, we have that $p^{-1}(U_j)=$ $\bigcup\hspace{-2.8mm\cdot}\hspace{1mm}$ $_{_{\alpha\in \mathcal{P}}}W_{j,\alpha}$ for every $j\in J$. On the other hand, $p|_{W_{j,\alpha}}:W_{j,\alpha}\rightarrow U_j$ is an ld-homeomorphism for every $j\in J$ and $\alpha\in \mathcal{P}$.\\
\hspace*{0.5cm}\texttt{(5)} $E$ is an \ld : Indeed, the regularity of $E$ can be deduced from the regularity of $B$ and \texttt{(4)}.\\
\hspace*{0.5cm}\texttt{(6)} $E$ is path-connected, hence connected: Let $e_0:=c_{b_0}^{\#}\in E$ for the ld-curve $c_{b_0}:I\rightarrow B: t\mapsto b_0$ (recall $b_0\in B$ is a fixed point). Given $\alpha\in \mathcal{P}$, we will show that there is and ld-map from $e_0$ to $\alpha^{\#}$. Consider the map $\widetilde{\alpha}:I\rightarrow E:s\mapsto \alpha_s^{\#}$, where $\alpha_s:I\rightarrow B:t\mapsto \alpha_s(t)=\alpha(ts)$ is clearly an ld-curve. Note that $p\circ \widetilde{\alpha}(s)=\alpha(s)$, $\widetilde{\alpha}(0)=e_0$ and $\widetilde{\alpha}(1)=\alpha^{\#}$. Let us check that $\widetilde{\alpha}$ is an ld-curve. Since $\alpha$ is an ld-curve, there are $s_0=0<s_1<\cdots <s_m=1$ such that $\alpha([s_k,s_{k+1}])\subset U_{i_k}$ for every $k=0,\ldots,m-1$. Hence $\widetilde{\alpha}(I)\subset \bigcup_{k=0}^{m-1} W_{i_{k},\alpha_{s_k}}$. On the other hand, $\phi_{i,\alpha_{s_k}} \circ \widetilde{\alpha}|_{[s_k,s_{k+1}]}=\alpha|_{[s_k,s_{k+1}]}$ for every $k=0,\ldots,m-1$ and therefore $\widetilde{\alpha}$ is an ld-curve as required.\\
\hspace*{0.5cm}\texttt{(7)} Finally, let us show that $p_*(\pi_1(E,e_0)^{\mathcal{R}})=L$. Let $\alpha$ be an ld-loop of $B$ at $b_0$. By the proof of \texttt{(6)}, $\widetilde{\alpha}:I\rightarrow E:s\mapsto \alpha_s^{\#}$, where $\alpha_s:I\rightarrow B:t\mapsto \alpha_s(t)=\alpha(ts)$, is an ld-curve. Now, as in the classical case, we have that $[\alpha]\in p_*(\pi_1(E,e_0)^{\mathcal{R}})$ if and only if $\widetilde{\alpha}(1)=\alpha^{\#}=e_0$. Indeed, the latter can be proved using both the path and homotopy lifting properties of covering maps (see the proof of Proposition \ref{prop:recufibracionlocal}). Hence $[\alpha]\in p_*(\pi_1(E,e_0)^{\mathcal{R}})$ if and only if $[\alpha]\in L$.\end{proof}
Note that if $B$ is an LD-group (see Definition \ref{dldgr}), then it is possible to define a group operation in the covering space $E$. Using the notation of the proof of Fact \ref{fact:recubrimientos}, given $\alpha,\beta \in \mathcal{P}$, we define $\alpha^{\#} \beta^{\#}:=(\alpha \beta)^{\#}$. Note that with this group operation $E$ becomes an LD-group. This was also proved in \cite{06EEl} for the particular case of the universal covering map of a definable group for o-minimal expansions of ordered groups.
\bigskip

POSTSCRIPT. After a preliminary version of this paper was written, the preprint \cite{08pP} by A. Pi\c{e}kosz  has appeared with some related results.

\begin{footnotesize}
\end{footnotesize}

\vspace{0.6cm}
\hspace{-0.6cm}\begin{scriptsize}DEPARTAMENTO DE MATEM\'ATICAS, UNIVERSIDAD AUT\'ONOMA DE MADRID, 28049, MADRID, SPAIN.\end{scriptsize}\\
\begin{scriptsize}\textit{E-mail address}: \texttt{elias.baro@uam.es}\end{scriptsize}\\

\hspace{-0.6cm}\begin{scriptsize}DEPARTAMENTO DE MATEM\'ATICAS, UNIVERSIDAD AUT\'ONOMA DE MADRID, 28049, MADRID, SPAIN.\end{scriptsize}\\
\begin{scriptsize}\textit{E-mail address}: \texttt{margarita.otero@uam.es}\end{scriptsize}\\

\end{document}